\newcommand{\bA}{\boldsymbol{A}}
\newcommand{\ba}{\boldsymbol{a}}
\newcommand{\bB}{\boldsymbol{B}}
\newcommand{\bC}{\boldsymbol{C}}
\newcommand{\bD}{\boldsymbol{D}}
\newcommand{\bE}{\boldsymbol{E}}
\newcommand{\bG}{\boldsymbol{G}}
\newcommand{\bg}{\boldsymbol{g}}
\newcommand{\bH}{\boldsymbol{H}}
\newcommand{\bj}{\boldsymbol{j}}
\newcommand{\bR}{\boldsymbol{R}}
\newcommand{\br}{\boldsymbol{r}}
\newcommand{\bS}{\boldsymbol{S}}
\newcommand{\bs}{\boldsymbol{s}}
\newcommand{\bV}{\boldsymbol{V}}
\newcommand{\bv}{\boldsymbol{v}}
\newcommand{\veco}{\mathbb{S}}
\newcommand{\bW}{\boldsymbol{W}}
\newcommand{\bw}{\boldsymbol{w}}
\newcommand{\bX}{\boldsymbol{X}}
\newcommand{\bY}{\boldsymbol{Y}}
\newcommand{\by}{\boldsymbol{y}}
\newcommand{\bZ}{\boldsymbol{Z}}
\newcommand{\balpha}{\boldsymbol{\alpha}}
\newcommand{\bbeta}{\boldsymbol{\beta}}
\newcommand{\boeta}{\boldsymbol{\eta}}
\newcommand{\bGamma}{\boldsymbol{\Gamma}}
\newcommand{\bgamma}{\boldsymbol{\gamma}}
\newcommand{\blambda}{\boldsymbol{\lambda}}
\newcommand{\bmu}{\boldsymbol{\mu}}
\newcommand{\bPsi}{\boldsymbol{\Psi}}
\newcommand{\bpsi}{\boldsymbol{\psi}}
\newcommand{\bSigma}{\boldsymbol{\Sigma}}
\newcommand{\bsigma}{\boldsymbol{\sigma}}
\newcommand{\bTau}{\boldsymbol{T}}
\newcommand{\btau}{\boldsymbol{\tau}}
\newcommand{\btheta}{\boldsymbol{\theta}}
\newcommand{\bTheta}{\boldsymbol{\Theta}}
\newcommand{\bzeta}{\boldsymbol{\zeta}}
\newcommand{\Zeta}{\Xi}
\newtheorem{theorem}{Theorem}
\newtheorem{corollary}{Corollary}
\newtheorem{lemma}{Lemma}
\theoremstyle{definition}
\newtheorem{remark}{Remark}
\newtheorem{thmlemma}{Lemma}[subsection]
\newenvironment{proofnoqed}{\par\noindent{\bf Proof\ }}{\\}
\DeclareFontFamily{U}  {MnSymbolF}{}
\DeclareSymbolFont{symbolsMN}{U}{MnSymbolF}{m}{n}
\DeclareFontShape{U}{MnSymbolF}{m}{n}{
    <-6>  MnSymbolF5
   <6-7>  MnSymbolF6
   <7-8>  MnSymbolF7
   <8-9>  MnSymbolF8
   <9-10> MnSymbolF9
  <10-12> MnSymbolF10
  <12->   MnSymbolF12}{}
\DeclareFontShape{U}{MnSymbolF}{b}{n}{
    <-6>  MnSymbolF-Bold5
   <6-7>  MnSymbolF-Bold6
   <7-8>  MnSymbolF-Bold7
   <8-9>  MnSymbolF-Bold8
   <9-10> MnSymbolF-Bold9
  <10-12> MnSymbolF-Bold10
  <12->   MnSymbolF-Bold12}{}
\DeclareMathSymbol{\tbigtimes}{\mathop}{symbolsMN}{2}
\newcommand*{\bigtimes}{%
  \DOTSB
  \tbigtimes
  \slimits@ 
}
\newcounter{EE-conds}
\newcounter{thm-conds}
\begin{document}

\def\spacingset#1{\renewcommand{\baselinestretch}%
{#1}\small\normalsize} \spacingset{1}

\title{\bf Doubly Distributed Supervised Learning and Inference with High-Dimensional Correlated Outcomes}

\author{Emily C. Hector and Peter X.-K. Song \thanks{
We would like to acknowledge support for this project from the National Science Foundation (NSF DMS1513595) and the National Institutes of Health (NIH R01ES024732).} \hspace{.2cm}\\
Department of Biostatistics\\
University of Michigan}

\date{}

\maketitle

\begin{abstract}
This paper presents a unified framework for supervised learning and inference procedures using the divide-and-conquer approach for high-dimensional correlated outcomes. We propose a general class of estimators that can be implemented in a fully distributed and parallelized computational scheme. Modelling, computational and theoretical challenges related to high-dimensional correlated outcomes are overcome by dividing data at both outcome and subject levels, estimating the parameter of interest from blocks of data using a broad class of supervised learning procedures, and combining block estimators in a closed-form meta-estimator asymptotically equivalent to estimates obtained by \cite{Hansen}'s generalized method of moments (GMM) that does not require the entire data to be reloaded on a common server. We provide rigorous theoretical justifications for the use of distributed estimators with correlated outcomes by studying the asymptotic behaviour of the combined estimator with fixed and diverging number of data divisions. Simulations illustrate the finite sample performance of the proposed method, and we provide an R package for ease of implementation.
\end{abstract}

\noindent%
{\it Keywords: 
Divide-and-conquer, Generalized method of moments, Estimating functions, Parallel computing, Scalable computing
}
\vfill

\newpage

\spacingset{1.45} 

\section{INTRODUCTION}
\label{sec:intro}
Although the divide-and-conquer paradigm has been widely used in statistics and computer science, its application with correlated data has been little investigated in the literature. We provide a theoretical justification, with theoretical guarantees, for divide-and-conquer methods with correlated data through a general unified estimating function theory framework. In particular, in this paper we focus on the large sample properties of a class of distributed and integrated estimators for supervised learning and inference with high-dimensional correlated outcomes. We consider $N$ independent observations $\left\{ \by_i, \bX_i \right\}_{i=1}^N$ where both the sample size $N$ and the dimension $M$ of the response vector $\by_i$ may be so big that a direct analysis of the data using conventional methodology is computationally intensive, or even prohibitive. Such data may arise, for example, from imaging measurements of brain activity or from genomic data. Denote by $f(\bY_i; \bX_i, \btheta, \bGamma_i)$ the $M$-variate joint parametric distribution of $\bY_i$ conditioned on $\bX_i$, where $\btheta$ is the parameter of interest and $\bGamma_i$ contains parameters, such as for high-order dependencies, that may be difficult to model or handle computationally.\\
Statistical inference with big data can be extremely challenging due to the high volume and high variety of these data, as noted recently by \cite{Secchi}. In the statistics literature, methodological efforts to date have primarily focused on high-dimensional covariates (i.e. high-dimensional $\bX_i$) with univariate responses (corresponding to $M=1$); see \cite{Johnstone-Titterington} for an overview of the difficulties and methods in linear regression, and the citations therein for references to the extensive publications in this field. By contrast, little work has focused on high-dimensional correlated outcomes (corresponding to large $M$), which pose an entirely new and different set of methodological challenges stemming from a high-dimensional likelihood. The divide-and-combine paradigm holds promise in overcoming these challenges; see \cite{Mackey-Talwalkar-Jordan} and \cite{Zhang-Duchi-Wainwright} for early examples of the power of divide-and-combine algorithms. Some recent divide-and-combine methods for independent outcomes can be found in \cite{Singh-Xie-Strawderman}, \cite{Lin-Zeng}, \cite{Lin-Xi}, \cite{Chen-Xie}, and \cite{Liu-Liu-Xie}, among others.\\
More recently, \cite{Hector-Song} proposed a Distributed and Integrated Method of Moments (DIMM), a divide-and-combine strategy for supervised learning and inference in a regression setting with high-dimensional correlated outcomes $\bY$. DIMM splits the $M$ elements of $\bY$ into blocks of low-dimensional response subvectors, analyzes these blocks in a distributed and parallelized computational scheme using pairwise composite likelihood (CL), and combines block-specific results using a closed-form meta-estimator in a similar spirit to \cite{Hansen}'s seminal generalized method of moments (GMM). DIMM overcomes computational challenges associated with high-dimensional outcomes by running block analyses in parallel and combining block-specific results via a computationally and statistically efficient closed-form meta-estimator. DIMM is easily implemented using MapReduce in the Hadoop framework (\cite{Khezr-Navimipour}), where blocks of data are loaded only once and in parallel. DIMM presents a useful and natural extension of the classical GMM framework, which easily accounts for inter-block dependencies. DIMM also improves on the classical meta-estimation where results from blocks are routinely assumed to be independent. DIMM is still challenged, however, when estimating a homogeneous parameter in the presence of heterogeneous parameters. Additionally, it is also challenged computationally when the sample size $N$ is large; the strategy of dividing high-dimensional vectors of correlated outcomes into blocks is insufficient to address the excessive computational demand, since the sample size remains large in the block analyses. Thus, another division at the subject level is inevitable to mitigate the computational burden arising from matrix inversions and iterative calculations in the block analyses.\\
This paper proposes a new doubly divided procedure to learn and perform inference for a homogeneous parameter of interest in the presence of heterogeneous parameters with a general class of supervised learning procedures. The double division at the response and subject levels further speeds up computations in comparison to DIMM and results in a double division of the data, visualized in Table \ref{double-split}: a division of the response $\bY$, and a random division of subjects into independent subject groups, resulting in blocks of data with a smaller sample of low-dimensional response subvectors. We consider a general class of supervised learning procedures to analyze these blocks separately and in parallel. Then we establish a GMM-type combination procedure that yields a meta-estimator of the parameter of interest. This proposed estimator is more general than the DIMM estimator in \cite{Hector-Song}, and thus appealing in many practical settings where analyzing data with both large $M$ and $N$ is challenging. We achieve a doubly divided learning and inference procedure implemented in a distributed and parallelized computational scheme. The proposed class of supervised learning procedures is very general, including many important estimation methods as special cases, such as Fisher's maximum likelihood, \cite{Wedderburn}'s quasi-likelihood, \cite{Liang-Zeger}'s generalized estimating equations, \cite{Huber}'s M-estimation for robust inference, with possible extensions to semi-parametric and non-parametric models.\\
\begin{table}[h]
\centering
\captionsetup{justification=centering}
\scalebox{0.98}{
\begin{tabular}{c |c c c| c c| c c c} 
\diagbox[width=6em]{Block}{Group} & Subject 1 & $\ldots$ & Subject $n_1$ & $\ldots$ & $\ldots$ & Subject 1 & $\ldots$ & Subject $n_K$ \\ [0.5ex] 
 \hline
 1 & $y_{11,11}$ & $\ldots$ & $y_{n_11,11}$ & $\ldots$ & $\ldots$ & $y_{11,1K}$ & $\ldots$ & $y_{n_K1,1K}$\\
 \vdots & \vdots & \vdots & \vdots & \vdots & \vdots & \vdots &\vdots & \vdots \\
 $m_1$ & $y_{1m_1,11}$ & $\ldots$ & $y_{n_1m_1,11}$ & $\ldots$ & $\ldots$ & $y_{1m_1,1K}$ & $\ldots$ & $y_{n_Km_1,1K}$ \\
 \hline
 \vdots & \vdots & \vdots & \vdots & \vdots & \vdots & \vdots & \vdots & \vdots \\
 \vdots & \vdots & \vdots & \vdots & \vdots & \vdots & \vdots & \vdots &\vdots \\
 \hline
 1 & $y_{11,J1}$ & $\ldots$ & $y_{n_11,J1}$ & $\ldots$ & $\ldots$ & $y_{11,JK}$ & $\ldots$ & $y_{n_K1,JK}$\\ 
 \vdots & \vdots & \vdots & \vdots & \vdots & \vdots & \vdots & \vdots & \vdots \\
 $m_J$ & $y_{1m_J,J1}$ & $\ldots$ & $y_{n_1m_J,J1}$ & $\ldots$ & $\ldots$ & $y_{11,JK}$ & $\ldots$ & $y_{n_Km_J,JK}$ \\[1ex] 
\end{tabular}
}
\smallskip
\caption{Double division of outcome data on both the dimension of responses (into blocks) and sample size (into groups).}
\label{double-split}
\end{table}
\noindent The proposed Doubly Distributed and Integrated Method of Moments (DDIMM) not only provides a unified framework of various supervised learning procedures of parameters with heterogeneity under the divide-and-combine paradigm, but provides key theoretical guarantees for statistical inference, such as consistency and asymptotic normality, while offering significant computational gains when response dimension $M$ and sample size $N$ are large. These are useful and innovative contributions to the arsenal of tools for high-dimensional correlated data analysis, and to the collection of divide-and-combine algorithms, which have so far concentrated on independently sampled data. In this paper, we focus on the theoretical aspects of doubly distributed learning and inference, including a goodness-of-fit test based on a $\chi^2$ statistic. We also study consistency and asymptotic normality of the proposed estimator as the number of data divisions diverges. This includes theoretical justifications for distributed inference when the dimension of the response and the number of response divisions diverges, which allows the analysis of highly dense outcome data.\\
The rest of the paper is organized as follows. Section \ref{sec:methods} describes the DDIMM, with examples introduced in Section \ref{sec:examples}. Section \ref{sec:asymptotics} discusses large sample properties of the proposed DDIMM. Section \ref{sec:implementation} presents the main contribution of the paper, a closed-form meta-estimator and its implementation in a parallel and scalable computational scheme. Section \ref{sec:simulations} illustrates the DDIMM's finite sample performance with simulations. Section \ref{sec:discussion} concludes with a discussion. Additional proofs and simulation results are deferred to the Appendices and Supplemental Material. An R package is available in the Supplemental Material.

\section{FORMULATION}
\label{sec:methods}

We begin with some notation. Let $\left\| \cdot \right\|$ be the $\ell_2$-norm for a $D$-dimensional vector $\ba$ and a $D_1 \times D_2$-dimensional matrix $\bA$ defined by, respectively:
\begin{align*}
\begin{array}{cclcl}
\left\| \ba \right\| &=& \left( \sum \limits_{d=1}^D a_d^2 \right)^{1/2} &~~\mbox{ for }&\ba=\left[ a_d \right]_{d=1}^D \in \mathbb{R}^D,\\
\left\| \bA \right\| &=& \left( \sum \limits_{d_1=1}^{D_1} \sum \limits_{d_2=1}^{D_2} A_{d_1d_2}^2 \right)^{1/2} &~~\mbox{ for }&\bA=\left[ A_{d_1d_2} \right]_{d_1,d_2=1}^{D_1,D_2} \in \mathbb{R}^{D_1 \times D_2}.
\end{array}
\end{align*}
We define the stacking operator $\veco (\cdot)$ for matrices $\left\{ \bA_{jk} \right\}_{j=1,k=1}^{J,K}$, $\bA_{jk}\in \mathbb{R}^{D^{jk}_1\times D_2}$, as
\begin{align*}
\veco \left( \bA_{jk}, \bA_{j'k'} \right)&=\left( \begin{array}{cc}
\bA^T_{jk} & \bA^T_{j'k'}
\end{array} \right)^T 
\in \mathbb{R}^{ (D^{jk}_1+D^{j'k'}_1) \times D_2 },\\
\veco^J \left( \bA_{jk} \right)&=\left( \begin{array}{ccc}
\bA^T_{1k} & \ldots & \bA^T_{Jk} 
\end{array} \right)^T \in \mathbb{R}^{D^k_1 \times D_2},\\
\veco^{JK} \left( \bA_{jk} \right)&=\left( \begin{array}{ccccccc}
\bA^T_{11} & \ldots & \bA^T_{J1} & \ldots & \bA^T_{1K} & \ldots & \bA^T_{JK}
\end{array} \right)^T \in \mathbb{R}^{D_1 \times D_2},
\end{align*}
where $D^k_1=\sum_{j=1}^J D^{jk}_1$, $D_1=\sum_{k=1}^{K} D^k_1$. Consider the collection of samples $\left\{ \by_i, \bX_i \right\}_{i=1}^N$, where $\bX_i \in \mathbb{R}^{M \times q}$ is fixed, $\bY_i \in \mathbb{R}^M$, $q, M \in \mathbb{N}$. The number of covariates $q$ is considered fixed in this paper. Let $\btheta, \bzeta$ take values in parameter spaces $\Theta \subseteq \mathbb{R}^p$, $\Zeta \subseteq \mathbb{R}^d$, both compact subsets of $p$- and $d$-dimensional Euclidean space respectively. Let $p,d \in \mathbb{N}$, and consider $\btheta$ to be the parameter of interest, and $\bzeta$ to be a potentially large vector of parameters of secondary interest. Let $\btheta_0 \in \Theta, \bzeta_0 \in \Zeta$ be the true values of $\btheta$ and $\bzeta$ respectively. Consider a class $\mathcal{P}=\left\{ P_{\btheta, \bzeta} \right\}$ of parametric models with associated estimating functions $\bPsi$ of parameter $\btheta$ (e.g. $\bPsi$ can be the derivative of some objective function). Suppose we want to learn the parameter $\btheta$ by finding the root of $\bPsi(\btheta; \by, \bzeta)=\boldsymbol{0}$, which is computationally intensive or even prohibitive due to the large dimension $M$ of $\by$, the large sample size $N$, or the large dimension $d$ of $\bzeta$. We focus on a divide-and-combine approach utilizing modern distributed computing platforms to alleviate the computational and modelling challenges posed by analyzing the whole data.

\subsection{Double data split procedure}
\label{subsec:dds}

First, for each subject $i$, DDIMM divides the $M$-dimensional response $\by_i$ and its associated covariates into $J$ blocks, denoted by:
\begin{align*}
\by_i&=\left(\begin{array}{ccc} \by_{i,1}^T & \ldots & \by_{i,J}^T \end{array} \right)^T \mbox{ and }\bX_i=\left( \begin{array}{ccc} \bX_{i,1}^T & \ldots & \bX_{i,J}^T \end{array} \right)^T, ~i=1, \ldots, N.
\end{align*}
Division into blocks is not restricted to the order of data entry: responses may be grouped according to pre-specified block memberships, according to, say, substantive scientific knowledge, such as functional regions of the brain. In this paper, with no loss of generality, we use the order of data entry in the data division procedure. Further, DDIMM randomly splits the $N$ independent subjects to form $K$ disjoint subject groups $\left\{ \by_{i,jk}, \bX_{i,jk} \right\}_{i=1}^{n_k}$. Then each group has sample size $n_k$, $k=1, \ldots, K$, with $\sum_{k=1}^K n_k=N$. Refer to Table \ref{double-split} for notation detail. For ease of exposition, we henceforth use the term ``group'' to refer to the division along subjects, and ``block'' to refer to the division along responses. We also use the term ``block'' to refer to the division along both responses and subjects.\\ 
We call $\left\{ \by_{i,jk}, \bX_{i,jk} \right\}_{i=1}^{n_k}$ block $(j,k)$, $j=1, \ldots, J$ and $k=1, \ldots, K$. Within block $(j,k)$, let $m_j$ be the dimension of the sub-response, $\by_{i,jk}=(y_{i1,jk}, \ldots, \allowbreak y_{im_j, jk})^T \allowbreak \in \mathbb{R}^{m_j}$, and $\bX_{i,jk} \in \mathbb{R}^{m_j \times q}$ the associated covariate matrix, with $\sum_{j=1}^J m_j=M$. For each block $j \in \left\{1, \ldots, J \right\}$, we have $K$ independent subject groups $\left\{ \by_{i,jk} \right\}_{i=1, k=1}^{n_k,K}$. In contrast, each group $k \in \left\{1,\ldots, K \right\}$ has $n_k$ subjects and for each subject $i \in \left\{1, \ldots, n_k \right\}$, the $J$ response blocks $\left\{ \by_{i,jk} \right\}_{j=1}^{m_j}$ are dependent.\\ 
The primary task is to solve $\bPsi(\btheta; \by, \bzeta)=\boldsymbol{0}$ to learn parameter $\btheta$ ina  supervised way over the entire data. Given the above double data split scheme, this task becomes a divide-and-combine procedure: the first step is to solve the following system of block-specific estimating equations: for $j\in \left\{1, \ldots, J\right\}$, $k\in \left\{1, \ldots, K\right\}$,
\begin{align}
\bPsi_{jk}(\btheta; \by_{jk}, \bzeta_{jk})&=\boldsymbol{0},\label{block-EE-1}  \\
\bG_{jk}(\bzeta_{jk}; \by_{jk}, \btheta)&=\boldsymbol{0}, \label{block-EE-2}
\end{align}
where $\bG_{jk}$ is an estimating function used to learn parameters $\bzeta_{jk}$ (e.g. correlation parameters) that are allowed to be heterogeneous across blocks such that $\bzeta=\veco^{JK} \left( \bzeta_{jk} \right)$. The true values $(\btheta_0, \bzeta_{jk0})$ of $(\btheta, \bzeta_{jk})$ are the values such that $E_{\btheta_0, \bzeta_{jk0}} \veco ( 
\bPsi_{jk}(\btheta_0; \by_{jk}, \bzeta_{jk0}), \allowbreak
\bG_{jk}(\bzeta_{jk0}; \by_{jk}, \btheta_0)
)
= \boldsymbol{0}$. Parameters $\bzeta_{jk0}$ take values in parameter space $\Zeta_{jk} \subset \mathbb{R}^{d_{jk}}$ for some $d_{jk}>0$ such that $\bzeta_0=\veco ^{JK}\left( \bzeta_{jk0} \right)$, $\Zeta=\bigtimes_{j=1,k=1}^{J,K} \Zeta_{jk}$, $d=\sum_{k=1}^K \sum_{j=1}^J d_{jk}$. Let $\bzeta_{k0}=\veco^J \left( \bzeta_{jk0} \right)$ and $\bzeta_k=\veco^J \left(\bzeta_{jk} \right)$. This is a similar approach to GEE2, proposed by \cite{Zhao-Prentice}, with details also in \cite{Liang-Zeger-Qaqish}, where unbiased estimating equations for the nuisance parameters are added in order to guarantee consistency. In this way, we impose homogeneity of the parameter of interest $\btheta$ across blocks but allow heterogeneity of the parameters of secondary interest. We assume that the class of parametric models $\mathcal{P}$ yields block-specific estimating functions satisfying the following regularity assumptions:
\begin{enumerate}[label=(A.\arabic*), labelindent=0pt]
\item \label{consistent}
\begin{enumerate}[label=(\roman*), labelindent=0pt]
\item \label{unbiased}
$\bPsi_{jk}$ and $\bG_{jk}$ are unbiased; that is, for all $\btheta \in \bTheta$, $\bzeta_{jk} \in \Zeta_{jk}$, $E_{\btheta, \bzeta_{jk}} \veco ( 
\bPsi_{jk} (\btheta; \bY_{jk}, \bzeta_{jk}), \allowbreak
\bG_{jk} (\bzeta_{jk}; \bY_{jk}, \btheta)
)
=\boldsymbol{0}$.
\item $E_{\btheta_0, \bzeta_{jk0}} \veco \left( \bPsi_{jk}(\btheta; \bY_{jk}, \bzeta_{jk}), \bG_{jk}(\bzeta_{jk}; \by_{jk}, \btheta) \right)$ has a unique zero at $(\btheta_0, \bzeta_{jk0})$. \label{unique-0}
\item \label{additive}
$\bPsi_{jk}$ and $\bG_{jk}$ are additive: for some kernel inference functions $\bpsi_{jk}$ and $\bg_{jk}$, they take the form
\begin{align*}
\left( \begin{array}{c}
\bPsi_{jk}(\btheta; \by_{jk}, \bzeta_{jk}) \\
\bG_{jk}(\bzeta_{jk}; \by_{jk}, \btheta)
\end{array} \right)
&= \frac{1}{n_k} \sum \limits_{i=1}^{n_k} \left( \begin{array}{c}
\bpsi_{jk} (\btheta; \by_{i,jk}, \bzeta_{jk})\\
\bg_{jk}(\bzeta_{jk}; \by_{i,jk}, \btheta)
\end{array} \right).
\end{align*}
\end{enumerate}
\setcounter{EE-conds}{\value{enumi}}
\end{enumerate}
We define $\bPsi_{jk}$ and $\bG_{jk}$ as being ``weakly regular'' based on the above conditions \ref{consistent} \ref{unbiased}-\ref{additive} in which the defining properties of a regular inference function are applied to its mean; see \cite{Song} Chapter 3.5 for a definition of regular inference functions. Additional conditions on the class $\mathcal{P}$ will be described throughout the paper where appropriate. Within block $(j,k)$, denote by $\widehat{\btheta}_{jk}$ and $\widehat{\bzeta}_{jk}$ the joint solution to \eqref{block-EE-1} and \eqref{block-EE-2}, estimators of $\btheta$ and $\bzeta_{jk}$ respectively. For notation purposes, let $\widehat{\btheta}_{list}=\veco^{JK} ( \widehat{\btheta}_{jk})$, $\widehat{\bzeta}_k = \veco^J ( \widehat{\bzeta}_{jk} )$, and $\widehat{\bzeta}_{list} = \veco^{JK}(\widehat{\bzeta}_{jk})$. Due to the homogeneity of $\btheta$, the next step is integration of the block-specific estimators $\widehat{\btheta}_{jk}$. By contrast, $\widehat{\bzeta}_{jk}$ remain heterogeneous and potentially high-dimensional. In the rest of the paper, for convenience of notation, we suppress the dependence of $\bPsi_{jk}$, $\bG_{jk}$, $\bpsi_{jk}$ and $\bg_{jk}$ on $\by_{jk}$ and $\by_{i,jk}$:
\begin{align*}
\bPsi_{jk}(\btheta; \bzeta_{jk})&=\bPsi_{jk}(\btheta; \by_{jk}, \bzeta_{jk}),~~
\bG_{jk}(\bzeta_{jk}; \btheta)=\bG_{jk}(\bzeta_{jk}; \by_{jk}, \btheta),\\
\bpsi_{i,jk}(\btheta; \bzeta_{jk})&=\bpsi_{jk}(\btheta; \by_{i,jk}, \bzeta_{jk}),~~
\bg_{i,jk}(\bzeta_{jk}; \btheta)=\bg_{jk}(\bzeta_{jk}; \by_{i,jk}, \btheta).
\end{align*}

\subsection{Integration}

Integrating block estimates $\widehat{\btheta}_{jk}$ into an estimator of $\btheta$, denoted by $\widehat{\btheta}_c$, will yield a more efficient estimate of $\btheta$. In the integration step, our intuition is to treat each system of equations $\veco \left( \bPsi_{jk}(\btheta; \bzeta_{jk}), \bG_{jk}(\bzeta_{jk}; \btheta) \right)= \boldsymbol{0}$ as a ``moment condition'' on $\btheta$ contributed by block $(j,k)$, $j=1, \ldots, J$, $k=1, \ldots, K$. Technically, we want to derive an estimator $\widehat{\btheta}_c$ of $\btheta$ that satisfies all $JK$ moment conditions that effectively makes use of the $JK$ estimates of $\btheta$ obtained from equations \eqref{block-EE-1} and \eqref{block-EE-2}. To address the issue that $\btheta$ is over-identified by the $JK$ moment conditions, we invoke \cite{Hansen}'s seminal generalized method of moments (GMM) to combine the moment conditions that arise from each block. Another significant advantage of GMM is that it allows us to incorporate between-block dependencies, which cannot be easily done in classical meta-estimation. To this end, define the subject group indicator $\delta_i(k)=\mathbbm{1}(\mbox{subject $i$ is in blocks $(j,k)$ for some }k \in \left\{1, \ldots, K \right\} \mbox{ and for all }j=1, \ldots, \allowbreak J)$ for $i=1, \ldots, N$, $k=1, \ldots, K$. For subject $i$, let 
\begin{align*}
\bpsi_i (\btheta; \bzeta) = \veco^{JK} \left( \delta_i(k) \bpsi_{i,jk} (\btheta; \bzeta_{jk}) \right),~~
\bg_i (\bzeta; \btheta) = \veco^{JK} \left( \delta_i(k) \bg_{i,jk} (\bzeta_{jk}; \btheta) \right),
\end{align*}
where clearly only one $\veco^J \left(\delta_i(k) \bpsi^T_{i,jk}(\btheta; \bzeta_{jk}) \right)$ is non-zero. Let $\ba^{\otimes 2}$ denote the outer product of a vector $\ba$ with itself, namely $\ba^{\otimes 2}=\ba \ba^T$. Then we can define $\bPsi_N(\btheta; \bzeta)=(1/N) \sum_{i=1}^N \bpsi_i(\btheta; \bzeta) $. It is easy to show that
\begin{align*}
\bPsi_N(\btheta; \bzeta)&=\frac{1}{N} \veco^{JK} \left( \sum \limits_{i=1}^{n_k} \bpsi_{i,jk}(\btheta; \bzeta_{jk}) \right) =\frac{1}{N} \veco^{JK} \left( n_k \bPsi_{jk}(\btheta; \bzeta_{jk}) \right).
\end{align*}
Similarly, define $\bG_N(\bzeta; \btheta)=(1/N) \sum_{i=1}^N \bg_i(\bzeta; \btheta)=(1/N) \veco^{JK} \left( n_k \bG_{jk}(\bzeta_{jk}; \btheta) \right)$. Since $\bPsi_{jk}$ and $\bG_{jk}$ satisfy assumptions \ref{consistent} for each $j$ and $k$, $\bPsi_N$ and $\bG_N$ are additive, unbiased, and $E_{\btheta_0, \bzeta_0} \veco \left(\bPsi_N (\btheta; \bzeta), \bG_N(\bzeta; \btheta) \right)$ has a unique zero at $(\btheta_0,\bzeta_0)$. For convenience, we denote 
\begin{align}
\bTau_N (\btheta, \bzeta)&= \left( \begin{array}{c} \bPsi_N(\btheta; \bzeta) \\ \bG_N(\bzeta; \btheta) \end{array} \right),~~
\btau_i(\btheta, \bzeta)=\left( \begin{array}{c} \bpsi_i(\btheta; \bzeta) \\ \bg_i(\bzeta; \btheta) \end{array} \right).
\label{TauN}
\end{align}
We assume that the class $\mathcal{P}$ yields $\bpsi$, $\bg$ satisfying the following conditions:
\begin{enumerate}[label=(A.\arabic*), labelindent=0pt]
\setcounter{enumi}{\value{EE-conds}}
\item \begin{enumerate}[label=(\roman*), labelindent=0pt]
\item \label{psiN-conds-i} Both $\bpsi_{jk}$ and $\bg_{jk}$ are Lipschitz continuous in $\btheta$ and $\bzeta$, namely for $j \in \left\{1, \ldots, J \right\}$, $k \in \left\{1, \ldots, K \right\}$, and some constants $c_{jk}, b_{jk}>0$, for all $\left( \btheta_1, \bzeta_{jk1}\right), \left(\btheta_2, \bzeta_{jk2} \right)$ in a neighbourhood of $(\btheta_0, \bzeta_{jk0})$,
\begin{align*}
\left\| \bpsi_{i,jk}(\btheta_1; \bzeta_{jk1}) - \bpsi_{i,jk}(\btheta_2; \bzeta_{jk2}) \right\| &\leq c_{jk} \left\| (\btheta_1, \bzeta_{jk1}) - (\btheta_2, \bzeta_{jk2}) \right\|,\\
\left\| \bg_{i,jk}(\bzeta_{jk1}; \btheta_1) - \bg_{i,jk}(\bzeta_{jk2}; \btheta_2) \right\| &\leq b_{jk} \left\| (\btheta_1, \bzeta_{jk1}) - (\btheta_2, \bzeta_{jk2}) \right\|.
\end{align*} 
\item The sensitivity matrix $-\nabla_{\btheta, \bzeta} E_{\btheta, \bzeta} \btau_i(\btheta, \bzeta)$ is continuous in a compact neighbourhood $\mathbb{N}(\btheta_0, \bzeta_0)$ of $(\btheta_0$, $\bzeta_0)$, and positive definite; 
\item The variability matrix $E_{\btheta_0, \bzeta_0} \left( \btau_i(\btheta, \bzeta)^{\otimes 2} \right)$ is finite and positive-definite. 
\end{enumerate}
\label{psiN-conds}
\setcounter{EE-conds}{\value{enumi}}
\end{enumerate}
Note that $\bTau_N(\btheta, \bzeta)=\boldsymbol{0}$ has no unique solution because its dimension is bigger than the dimension of $\btheta$. To overcome this issue, we follow Hansen's GMM for over-identified parameters. Let $\bW$ be the weight matrix in the GMM equation \eqref{def:combined-estimator-nuisance}. Classical GMM theory states that any positive semi-definite matrix $\bW$ can be used to guarantee consistency and asymptotic normality of the resulting estimator, and that an optimal choice of $\bW$, corresponding to the inverse covariance of the estimating function $\bTau_N$ in \eqref{TauN}, leads to an efficient GMM estimator. In our setting, a possible formulation for a GMM estimator of $(\btheta, \bzeta)$ is
\begin{align}
(\widehat{\btheta}_c , \widehat{\bzeta}_c )&=
\arg \min \limits_{\btheta, \bzeta} Q_N(\btheta, \bzeta \lvert \bW), \mbox{ where} \label{def:combined-estimator-nuisance}\\
Q_N(\btheta, \bzeta \lvert \bW)&=\bTau^T_N(\btheta, \bzeta)
\bW
\bTau_N(\btheta, \bzeta). \nonumber
\end{align}
In \eqref{def:combined-estimator-nuisance}, the weight matrix $\bW$ is a positive semi-definite $(JKp+d) \times (JKp+d)$ matrix. The heterogeneity of $\bzeta$ allowed by the use of $\bG_N$ can lead to theoretical and computational challenges due to the high-dimensionality of the parameter, a problem from which GEE2 also suffers. See \cite{Chan-Kuk-Bell-McGilchrist} and \cite{Carey-Zeger-Diggle} for a discussion on the computational burden of inverting large matrices in GEE2. Note that block-specific estimators $\widehat{\bzeta}_{list}$ are consistent; the only possible improvement from re-learning $\bzeta$ in an iterative procedure between $\widehat{\btheta}_c$ and $\widehat{\bzeta}_c$ is a gain in efficiency. This is not necessary since $\bzeta$ are parameters of secondary interest and their efficiency is in general not of interest. We will derive a closed-form meta-estimator of $\btheta$ that avoids re-learning of $\bzeta$ in Section \ref{sec:implementation}.\\
Following the work of \cite{Hansen}, we define a particular instance of the estimator in \eqref{def:combined-estimator-nuisance} by specifying $\bW$ as the inverse sample covariance of $\bTau_N$. We will show in Section \ref{sec:asymptotics} that this choice of $\bW$ is optimal for the efficiency of the resulting estimator. Let $\widehat{\bV}_N$ be the sample covariance of $\bTau_N(\btheta_0, \bzeta_0)$:
\begin{align}
\widehat{\bV}_N&=\frac{1}{N} \sum \limits_{i=1}^N \left(\btau_i(\widehat{\btheta}_{list}, \widehat{\bzeta}_{list}) \right)^{\otimes 2} = \frac{1}{N} \sum \limits_{i=1}^N \left( \begin{array}{c} \bpsi_i(\widehat{\btheta}_{list}; \widehat{\bzeta}_{list}) \\ \bg_i(\widehat{\bzeta}_{list}; \widehat{\btheta}_{list}) \end{array} \right)^{\otimes 2}, \label{V_hat_N}
\end{align}
where $\bpsi_i(\widehat{\btheta}_{list}; \widehat{\bzeta}_{list})=\veco^{JK} \left( \delta_i(k) \bpsi_{i,jk}(\widehat{\btheta}_{jk}; \widehat{\bzeta}_{jk}) \right)$. Letting $\bW=\widehat{\bV}^{-1}_N$ yields the following optimal GMM estimator:
\begin{align}
(\widehat{\btheta}_{opt} , \widehat{\bzeta}_{opt} )&=
\arg \min \limits_{\btheta, \bzeta} 
\bTau^T_N(\btheta, \bzeta)
\widehat{\bV}^{-1}_N 
\bTau_N(\btheta, \bzeta). \label{def:combined-estimator-nuisance-opt}
\end{align}
We assume that $\bW$ and $\widehat{\bV}_N$ are nonsingular; see \cite{Han-Song} for optimal weighting matrix with QIF when the sample covariance is ill-defined. Before presenting large-sample properties of $(\widehat{\btheta}_c, \widehat{\bzeta}_c)$ and $(\widehat{\btheta}_{opt}, \widehat{\bzeta}_{opt})$ in Section \ref{sec:asymptotics}, we demonstrate in Section \ref{sec:examples} the flexibility of our framework through several important supervised learning methods.

\section{Examples}

\label{sec:examples}

We now present five examples to illustrate the flexibility of the unifying framework considered in this paper.

\subsection{Likelihood-based methods}
\label{subsec:likelihood}
Consider the multidimensional regression model $h(\bmu_{i,jk})=\bX_{i,jk} (\begin{array}{cc} \btheta^T & \bbeta^T_{jk} \end{array})^T$, where $\bmu_{i,jk}=E(\bY_{i,jk} \lvert \allowbreak \bX_{i,jk}, \btheta, \bbeta_{jk})$ is the mean vector of $\bY_{i,jk}$ given $\bX_{i,jk}$, $\bbeta_{jk}$, and the $p$-dimensional parameter $\btheta$ ($p \leq q$ the number of covariates, which may include an intercept), and $h$ is a known component-wise link function. Let $\bzeta_{jk}$ be parameters of the second-order moments of $\bY_{i,jk}$, such as dispersion parameters, and parameters in $\bbeta_{jk}$ (which may be empty). If the full likelihood of $\bY_{i,jk}$ is computationally tractable, $\bPsi_{jk}$ and $\bG_{jk}$ correspond to the score functions, and $\widehat{\btheta}_{jk}$ and $\widehat{\bzeta}_{jk}$ may be given by the maximum likelihood estimates (MLEs). DDIMM can be applied straightforwardly by following the procedure in Section \ref{sec:methods}.\\
If the full likelihood is computationally intractable or difficult to construct, one can instead use pseudo-likelihoods such as the pairwise composite likelihood. The pairwise composite likelihood, originally proposed by \cite{Lindsay} and detailed in \cite{Varin-Reid-Firth}, provides the following forms of the equations for \eqref{block-EE-1} and \eqref{block-EE-2}:
\begin{align*}
\bPsi_{jk}(\btheta; \bzeta_{jk})&=\frac{1}{n_k}\sum \limits_{i=1}^{n_k} \sum_{r=1}^{m_j-1} \sum_{t=r+1}^{m_j} \nabla_{\btheta} \log f_j (y_{ir,jk}; y_{it,jk}; \btheta, \bzeta_{jk}, \bX_{i,jk}),\\
\bG_{jk}(\bzeta_{jk}; \btheta)&=\frac{1}{n_k} \sum \limits_{i=1}^{n_k} \sum_{r=1}^{m_j-1} \sum_{t=r+1}^{m_j} \nabla_{\bzeta_{jk}} \log f_j (y_{ir,jk}; y_{it,jk}; \btheta, \bzeta_{jk}, \bX_{i,jk}),
\end{align*}
for some bivariate marginal $f_{j}$ which can be chosen according to the nature of the response data. As long as the bivariate marginals $f_j$ are correctly specified, the composite score functions $\bPsi_{jk}$ and $\bG_{jk}$ satisfy the regularity conditions in \ref{consistent}. Hence the DDIMM can be used to overcome the computational challenges related to the MLE and pairwise composite likelihood. We refer readers to Chapter 6 of \cite{Song} and Chapter 3 of \cite{Joe-2} for details on constructing multivariate distributions using Gaussian and vine copulas respectively, but note that direct computation of the MLE is computationally very challenging when $m_j \geq 4$. Examples of applications of Gaussian copulas can be found in \cite{Song-Li-Yuan}, \cite{Bodnar-Bodnar-Gupta}, \cite{Bai-Kang-Song}, and in the importance sampling algorithm proposed in \cite{Masarotto-Varin}, among others.

\subsection{Generalized estimating equations}
\label{subsec:gee}
More generally, \cite{Wedderburn}'s quasi-likelihood is a popular alternative method of supervised learning that does not require a fully specified multidimensional likelihood; it receives a full treatment in \cite{Heyde}. Consider \cite{Liang-Zeger}'s marginal mean model $h(\bmu_{i,jk})=\bX_{i,jk} (\begin{array}{cc} \btheta^T & \bbeta^T_{jk} \end{array})^T$ for the analysis of longitudinal data, where $\bmu_{i,jk}=E(\bY_{i,jk} \lvert \allowbreak \bX_{i,jk}, \btheta, \bbeta_{jk})$ is the marginal mean vector of serially correlated outcomes $\bY_{i,jk}$ given $\bX_{i,jk}$, $\bbeta_{jk}$, and the $p$-dimensional parameter $\btheta$ ($p \leq q$), and $h$ is a known component-wise link function. In this setting, $\bzeta_{jk}$ consists of parameters in $\bbeta_{jk}$ (which may be empty), parameters for the variances of $Y_{it,jk}$, $t=1, \ldots, m_j$, and a nuisance parameter $\balpha_{jk}$ which fully characterizes a working correlation matrix $\bR_{jk}(\balpha_{jk})$. In the case where $\bbeta_{jk}$ is empty, the generalized estimating equation (GEE) proposed by \cite{Liang-Zeger} yields the the kernel inference function $\bpsi_{jk}(\btheta; \bzeta_{jk}) =  \bD_{i,jk}^T \bSigma_{i,jk}^{-1} \br_{i,jk}$ in \ref{consistent} \ref{additive}, where $\bD_{i,jk}=\nabla_{\btheta} \bmu_{i,jk}$, $\br_{i,jk}=\by_{i,jk}-\bmu_{i,jk}$, and $\bSigma_{i,jk}=\bA_{i,jk} \bR_{jk}(\balpha_{jk}) \bA_{i,jk}$, where $\bA_{i,jk}=\mbox{diag} \left\{ (Var(Y_{it,jk}) )^{1/2} \right\}_{t=1}^{m_j}$. In GEE2, $\bG_{jk}$ in \eqref{block-EE-2} is specified as another unbiased inference function satisfying \ref{consistent} and \ref{psiN-conds}. DDIMM provides a procedure for the application of distributed methods to high-dimensional longitudinal/clustered data.

\subsection{M-estimation}
\label{subsec:m-est}
DDIMM can be applied to many learning methods proposed in robust statistics. In the robust statistics literature due to \cite{Huber} and, more generally, \cite{Huber-book}, an M-estimator is defined as the root of an implicit equation of the form $\bPsi_{jk}(\widehat{\btheta}_{jk})=\sum_{i=1}^{n_k} \bpsi_{jk}(\widehat{\btheta}_{jk})=\boldsymbol{0}$, where $\bpsi_{jk}(\btheta)=\nabla_{\btheta} \rho (\btheta)$, $\rho$ is a suitable function that primarily aims to provide estimators robust to influential data points, and $\widehat{\btheta}_{jk} \in \mathbb{R}^p$, and $\bzeta_{jk}$ is empty or known; additional details are available in \cite{Huber-book} for the case when $\bzeta_{jk}$ is unknown. In the context of longitudinal data, \cite{Wang-Lin-Zhu} robustify the generalized estimating equations of \cite{Liang-Zeger} by replacing the standardized residuals with Huber's $M$-residuals.

\subsection{Joint mean-variance modelling}
\label{subsec:joint}
Following \cite{Pan-Mackenzie}, one can jointly model the marginal means and covariances of the longitudinal responses with $h(\bmu_{i,jk})= \bX_{i,jk,1} \bbeta$, $\log (\bsigma^2_{i,jk})= \bX_{i,jk,2} \blambda$, and $\phi_{irt,jk}=\bX_{irt,jk,3} \bgamma$ for $1 \leq t<r \leq m_j$, where $h$ is a known component-wise link function, $\bbeta \in \mathbb{R}^{q_1}$, $\blambda \in \mathbb{R}^{q_2}$ and $\bgamma \in \mathbb{R}^{q_3}$ are unconstrained parameters,  $\bmu_{i,jk}=E(\bY_{i,jk} \lvert \bX_{i,jk,1}, \btheta)$ and $\bX_{i,jk,1} \in \mathbb{R}^{m_j \times q_1}$ a submatrix of $\bX_{i,jk}$, $\bsigma^2_{i,jk}=\veco \left( Var(Y_{ir,jk}) \right)_{r=1}^{m_j}$ and $\bX_{i,jk,2}\in \mathbb{R}^{m_j \times q_2}$ a submatrix of $\bX_{i,jk}$, and $\phi_{irt,jk}$ are specified in \cite{Zhang-Leng-Tang}. Estimating functions $\bPsi_{jk}$ and $\bG_{jk}$ in \eqref{block-EE-1} and \eqref{block-EE-2} are given in detail in \cite{Zhang-Leng-Tang}. There is some choice depending on the problem considered as to whether $\btheta=\bbeta$, $\btheta=( \blambda, \bgamma)$, or $\btheta=( \bbeta, \blambda, \bgamma )$. In the first case, learning of variance parameters only helps improve estimation efficiency. This type of framework is widely applied in biomedical studies where the mean parameters are of primary interest. In the second case, learning of covariance parameters is of interest and $\bbeta$ is treated as a nuisance parameter. This is the situation where prediction is of primary interest, such as in kriging in spatial data analysis. In the third case, $\bG_{jk}$ is null, and learning of variance parameters is of interest to the investigator. This case occurs for example in the study of volatility for risk management in financial data analysis.

\subsection{Marginal quantile regression for correlated data}
\label{subsec:quantile}
\sloppy
Consider the marginal quantile regression model $Q_{Y_{it,jk} \lvert \bX_{it,jk}}(\tau)=\bX_{it,jk} \btheta$, where $Q_{Y_{it,jk} \lvert \bX_{it,jk}}(\tau)=F^{-1}_{Y_{it,jk} \lvert \bX_{it,jk}}(\tau)=\inf \{ y_{it,jk} : F_{Y_{it,jk} \lvert \bX_{it,jk}}(y_{it,jk}) \geq \tau \}$ is the $\tau$th quantile of $Y_{it,jk} \lvert \bX_{it,jk}$, $\tau \in (0,1)$, where $f_{Y_{it,jk} \lvert \bX_{it,jk}}(y_{it,jk})$ is the conditional distribution function of $Y_{it,jk}$ given $\bX_{it,jk}$, $t=1, \ldots, m_j$. Many estimating functions $\bPsi_{jk}$ and $\bG_{jk}$ for the learning of $\btheta$ and association parameters $\bzeta_{jk}$ of $\bY_{i,jk}$ have been proposed; see \cite{Jung}, \cite{Fu-Wang}, \cite{Lu-Fan}, and \cite{Yang-Chen-Chang} for examples.\\

\noindent Each of these five examples requires additional work to fully develop a divide-and-conquer strategy via DDIMM, including specific computational details. Here we only present the general framework with a high-level discussion that sheds light on DDIMM's promising generality and flexibility, and its coverage of a wide range of supervised learning methods. The theoretical results presented in Sections \ref{sec:asymptotics} and \ref{sec:implementation} are developed under a general unified framework of estimating functions that includes the above five examples as special cases.

\section{ASYMPTOTIC PROPERTIES}
\label{sec:asymptotics}

In this section we assume that $K$ and $J$ are fixed; this assumption will be relaxed in Section \ref{sec:implementation}. Let $n_{\min}=\min_{k=1, \ldots, K} n_k$ and $n_{\max}=\max_{k=1, \ldots, K} n_k$. Suppose $\bW \stackrel{p}{\rightarrow} \bw$ as $n_{\min} \rightarrow \infty$. In this section we study the asymptotic properties of the GMM estimator $(\widehat{\btheta}_c, \widehat{\bzeta_c})$ proposed in \eqref{def:combined-estimator-nuisance} and its optimal version proposed in \eqref{def:combined-estimator-nuisance-opt}. We assume throughout that subjects are monotonically allocated to subject groups; that is, as $n_{\min} \rightarrow \infty$, a subject cannot be reallocated to another group once it has been assigned to a subject group. Define the variability matrix of $\btau_i(\btheta, \bzeta)$ in \eqref{TauN} as
\begin{align*}
\bv(\btheta, \bzeta)&=Var_{\btheta_0, \bzeta_0} \left\{ \btau_i (\btheta, \bzeta) \right\}=\left( \begin{array}{cc}
\bv_{\bpsi}(\btheta, \bzeta) & \bv_{\bpsi \bg}(\btheta, \bzeta)\\
\bv^T_{\bpsi \bg}(\btheta, \bzeta) & \bv_{\bg}(\btheta, \bzeta)
\end{array} \right)
\end{align*}
where $\bv_{\bpsi}(\btheta, \bzeta)=Var_{\btheta_0, \bzeta_0} \left\{ \bpsi_i(\btheta; \bzeta) \right\}$, $\bv_{\bg}(\btheta, \bzeta)=Var_{\btheta_0, \bzeta_0} \left\{ \bg_i(\bzeta; \btheta) \right\}$, and $
\bv_{\bpsi \bg}(\btheta, \bzeta)=E_{\btheta_0, \bzeta_0} \left\{ \bpsi_i(\btheta; \bzeta) \bg_i^T(\bzeta; \btheta) \right\}$.
Let the sensitivity matrix of $\btau_i(\btheta, \bzeta)$ be
\begin{align}
&~~~~\bs(\btheta, \bzeta)=-\nabla_{\btheta, \bzeta} E_{\btheta_0, \bzeta_0} \btau_i(\btheta, \bzeta)
=\left( \begin{array}{cc}
\bs^{\btheta}_{\bpsi}(\btheta, \bzeta) & 
\bs^{\bzeta}_{\bpsi}(\btheta, \bzeta)\\
\bs^{\btheta}_{\bg}(\btheta, \bzeta) & 
\bs^{\bzeta}_{\bg}(\btheta, \bzeta)
\end{array} \right), \mbox{ where}\\
\label{tau-sensitivity}
&\begin{array}{ll}
\bs^{\btheta}_{\bpsi}(\btheta, \bzeta)=\veco^{JK} \left( \frac{n_k}{N} \bs^{\btheta}_{\bpsi_{jk}}(\btheta, \bzeta_{jk}) \right), &
\bs^{\bzeta}_{\bpsi}(\btheta, \bzeta)=\mbox{diag} \left\{ \frac{n_k}{N}\bs^{\bzeta}_{\bpsi_{jk}}(\btheta, \bzeta_{jk}) \right\}_{j=1,k=1}^{J,K}, \nonumber \\
\bs^{\btheta}_{\bg}(\btheta, \bzeta)=\veco^{JK} \left( \frac{n_k}{N} \bs^{\btheta}_{g_{jk}}(\btheta, \bzeta_{jk}) \right),&
\bs^{\bzeta}_{\bg}(\btheta, \bzeta)=\mbox{diag} \left\{ \frac{n_k}{N} \bs^{\bzeta}_{\bg_{jk}}(\btheta, \bzeta_{jk}) \right\}_{j=1,k=1}^{J,K}
\end{array} \nonumber \\
&\bs_{jk}(\btheta, \bzeta_{jk})
=\left( \begin{array}{cc} 
\bs^{\btheta}_{\bpsi_{jk}}(\btheta, \bzeta_{jk}) & 
\bs^{\bzeta}_{\bpsi_{jk}}(\btheta, \bzeta_{jk})\\
\bs^{\btheta}_{\bg_{jk}}(\btheta, \bzeta_{jk}) & 
\bs^{\bzeta}_{\bg_{jk}}(\btheta, \bzeta_{jk})
\end{array} \right). \nonumber 
\end{align}
Following Theorem 3.4 of \cite{Song}, block-specific estimates $\widehat{\btheta}_{jk}$ and $\widehat{\bzeta}_{jk}$ are consistent given assumptions \ref{consistent}. Consistency and asymptotic normality of the GMM estimator $(\widehat{\btheta}_c, \widehat{\bzeta}_c)$ in \eqref{def:combined-estimator-nuisance} have been established by \cite{Hansen} and, more generally, by \cite{Newey-McFadden}. To establish consistency and asymptotic normality for the combined estimator $(\widehat{\btheta}_c, \widehat{\bzeta}_c)$, we consider the following additional regularity conditions:
\begin{enumerate}[label=(A.\arabic*)]
\setcounter{enumi}{\value{EE-conds}}
\item Following \cite{Newey-McFadden}, define
\begin{align*}
Q_0(\btheta, \bzeta \lvert \bW)=E_{\btheta, \bzeta} \left\{ \bTau^T_N(\btheta, \bzeta) \right\}
\bw
E_{\btheta, \bzeta} \left\{ \bTau_N(\btheta, \bzeta) \right\}.
\end{align*}
Assume $Q_0(\btheta, \bzeta \lvert \bW)$ is twice-continuously differentiable in a neighbourhood of $(\btheta_0, \bzeta_0)$. \label{cond-consist}
\item Let $(\widehat{\btheta}_c, \widehat{\bzeta}_c)=\arg \min \limits_{\btheta, \bzeta} Q_N(\btheta, \bzeta \lvert \bW)$. Following \cite{Newey-McFadden}, assume $Q_N(\widehat{\btheta}_c, \widehat{\bzeta}_c \lvert \bW) \leq \inf \limits_{\btheta \in \Theta, \bzeta \in \Zeta} Q_N(\btheta, \bzeta \lvert \bW) + \epsilon_N$ with $\epsilon_N=o_p(1)$. In addition, assume that $\btheta_0$, $\bzeta_0$ are interior points of $\Theta$ and $\Zeta$ respectively, and that for any $\delta_N \rightarrow 0$,
 \label{cond-norm}
\setcounter{EE-conds}{\value{enumi}}
\end{enumerate}
\begin{equation*}
\resizebox{\linewidth}{!} 
{
    $
\sup \limits_{\left\| (\btheta, \bzeta) - (\btheta_0, \bzeta_0) \right\| \leq \delta_N} \frac{N^{1/2}}{1+N^{1/2} \left\| (\btheta, \bzeta) - (\btheta_0, \bzeta_0) \right\| } \left\| \bTau_N(\btheta, \bzeta) - \bTau_N(\btheta_0, \bzeta_0) - E_{\btheta_0, \bzeta_0} \bTau_N(\btheta, \bzeta) \right\| \stackrel{p}{\rightarrow} 0.$
}
\end{equation*}
\\
Theorems \ref{thm:consist} and \ref{thm:norm} do not require the differentiability of $\bTau_{N}$ and $Q_N$. Instead, they require the differentiability of their population versions, and that $\bTau_N$ behaves ``nicely'' in a neighbourhood of $(\btheta_0, \bzeta_0)$, in the sense that higher order terms are asymptotically ignorable. The following two theorems state the consistency and asymptotic normality of $(\widehat{\btheta}_c, \widehat{\bzeta}_c)$ given in \eqref{def:combined-estimator-nuisance} under Newey and McFadden's mild moment conditions given in \ref{cond-consist} and \ref{cond-norm}. 
\begin{theorem}[Consistency of $(\widehat{\btheta}_c, \widehat{\bzeta}_c)$]
\label{thm:consist}
Suppose assumptions \ref{consistent}-\ref{cond-consist} hold with $(\widehat{\btheta}_c, \widehat{\bzeta}_c)$ defined in \eqref{def:combined-estimator-nuisance}. Then $(\widehat{\btheta}_c, \widehat{\bzeta}_c) \stackrel{p}{\rightarrow} (\btheta_0, \bzeta_0)$ as $n_{\min} \rightarrow \infty$. 
\end{theorem}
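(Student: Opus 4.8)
The plan is to treat the estimator in \eqref{def:combined-estimator-nuisance} as a minimum-distance (GMM) extremum estimator and apply the classical consistency argument for such estimators, in the spirit of Theorem 2.1 of \cite{Newey-McFadden}, exploiting that $\bTau_N$ is an average of independent contributions over subjects. The first step is to verify \emph{identification}, i.e. that the population objective $Q_0(\btheta,\bzeta \lvert \bW) = E_{\btheta,\bzeta}\{\bTau^T_N(\btheta,\bzeta)\}\, \bw\, E_{\btheta,\bzeta}\{\bTau_N(\btheta,\bzeta)\}$ from \ref{cond-consist} is uniquely minimized at $(\btheta_0,\bzeta_0)$. Since the limiting weight matrix $\bw$ is nonsingular and positive semi-definite, hence positive definite, $Q_0(\btheta,\bzeta \lvert \bW) \geq 0$ with equality if and only if $E_{\btheta_0,\bzeta_0}\bTau_N(\btheta,\bzeta) = \boldsymbol{0}$. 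By unbiasedness of $\bPsi_N$ and $\bG_N$ we have $Q_0(\btheta_0,\bzeta_0 \lvert \bW) = 0$, and by the uniqueness of the zero of $E_{\btheta_0,\bzeta_0}\veco(\bPsi_N(\btheta;\bzeta), \bG_N(\bzeta;\btheta))$ recorded just before \eqref{TauN} (itself a consequence of assumptions \ref{consistent}), the population moment vanishes only at $(\btheta_0,\bzeta_0)$; hence $Q_0$ is uniquely minimized there.

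The second step is \emph{uniform convergence}: $Q_N(\cdot \lvert \bW)$ converges in probability to $Q_0(\cdot \lvert \bW)$ uniformly over the compact set $\Theta \times \Zeta$. The key ingredient is a uniform law of large numbers showing that $\bTau_N(\btheta,\bzeta)$ converges in probability to $E_{\btheta_0,\bzeta_0}\bTau_N(\btheta,\bzeta)$ uniformly over $\Theta \times \Zeta$. Because $\bTau_N$ is a weighted average of the independent kernels $\btau_i(\btheta,\bzeta)$ with deterministic weights $n_k/N \in (0,1)$, $\btau_i$ is Lipschitz in $(\btheta,\bzeta)$ by \ref{psiN-conds} \ref{psiN-conds-i}, and the kernels have finite second moments by \ref{psiN-conds}, the ordinary weak law of large numbers applies pointwise within each of the $K$ fixed groups as $n_{\min}\rightarrow\infty$, and compactness of $\Theta \times \Zeta$ together with the Lipschitz control upgrades this to uniform convergence by a standard covering / stochastic-equicontinuity argument (e.g.\ Lemma 2.4 of \cite{Newey-McFadden}). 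Combining this with $\bW \stackrel{p}{\rightarrow} \bw$, the uniform boundedness of $\bTau_N$ and of $E_{\btheta_0,\bzeta_0}\bTau_N$ on the compact set, and the continuity of $Q_0$ (implied by \ref{cond-consist} near $(\btheta_0,\bzeta_0)$ and by the regularity maintained on $\bTau_N$ globally) yields the desired uniform convergence of $Q_N$ to $Q_0$.

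With identification of $(\btheta_0,\bzeta_0)$ as the unique minimizer of the continuous function $Q_0$, compactness of $\Theta \times \Zeta$, and uniform convergence in probability of $Q_N$ to $Q_0$ all in hand, the consistency theorem for extremum estimators (Theorem 2.1 of \cite{Newey-McFadden}) applies directly and gives $(\widehat{\btheta}_c, \widehat{\bzeta}_c) \stackrel{p}{\rightarrow} (\btheta_0, \bzeta_0)$ as $n_{\min}\rightarrow\infty$.

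The step I expect to be the main obstacle is the uniform law of large numbers for $\bTau_N$. This is a triangular-array statement: the per-group sample sizes $n_k$ and the normalizing ratios $n_k/N$ vary with $N$, subjects are added along a fixed nested (monotone) allocation, and the Lipschitz condition \ref{psiN-conds-i} is imposed only in a neighbourhood of $(\btheta_0,\bzeta_0)$, so some care is needed both to make the pointwise weak law hold along the chosen allocation and to obtain control of $\bTau_N$ that is genuinely uniform over the whole parameter space rather than merely local. Once that uniform convergence is established, the remaining steps are routine manipulations of positive semi-definite quadratic forms.
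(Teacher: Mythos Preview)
Your proposal is correct and matches the paper's approach: the paper itself omits the proof entirely, stating only that it ``follows closely the steps given in \cite{Hansen} and \cite{Newey-McFadden}'', and your sketch is precisely the standard Newey--McFadden extremum-estimator consistency argument (identification from the unique zero of the population moment plus positive definiteness of $\bw$, uniform convergence of $Q_N$ to $Q_0$ via a ULLN for $\bTau_N$, then Theorem~2.1 of \cite{Newey-McFadden}). Your caveat about the Lipschitz condition in \ref{psiN-conds}\ref{psiN-conds-i} being only local is well taken and is indeed a gap in the paper's stated assumptions for a global ULLN, but this is a technical point the paper simply does not address rather than a divergence in method.
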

The proof of Theorem \ref{thm:consist} follows closely the steps given in \cite{Hansen} and \cite{Newey-McFadden}, and thus is omitted.
\begin{theorem}[Asymptotic normality of $(\widehat{\btheta}_c, \widehat{\bzeta}_c$]
\label{thm:norm}
Suppose assumptions \ref{consistent}-\ref{cond-norm} hold with $(\widehat{\btheta}_c, \widehat{\bzeta}_c)$ defined in \eqref{def:combined-estimator-nuisance}. Then as $n_{\min} \rightarrow \infty$,\\
\scalebox{0.95}{\parbox{\linewidth}{%
\begin{align*}
N^{1/2} \left( \begin{array}{c} \widehat{\btheta}_c - \btheta_0 \\ \widehat{\bzeta}_c - \bzeta_0 \end{array} \right) 
\stackrel{d}{\rightarrow} 
\mathcal{N} \left(0, 
\bj^{-1}(\btheta_0, \bzeta_0) \bs(\btheta_0, \bzeta_0) \tilde{\bv}(\btheta_0, \bzeta_0) \bs^T(\btheta_0, \bzeta_0) \bj^{-1}(\btheta_0, \bzeta_0)
\right),
\end{align*}
}}\\
where $\tilde{\bv}(\btheta, \bzeta)=\bw \bv(\btheta, \bzeta) \bw$, and where the Godambe information $\bj(\btheta, \bzeta)$ of $\bTau_N(\btheta, \bzeta)$ takes the form $\bj(\btheta, \bzeta)= \bs(\btheta, \bzeta) \bw \bs^T(\btheta, \bzeta)$.
\end{theorem}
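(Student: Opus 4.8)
The plan is to run the statement through the standard GMM asymptotic-normality machinery of \cite{Hansen} and \cite{Newey-McFadden} applied to the stacked moment vector $\bTau_N$, and then to algebraically collapse the generic sandwich covariance it produces into the Godambe form in the statement. Conditions \ref{consistent}--\ref{cond-norm} have been set up precisely to match the hypotheses of that theory: \ref{consistent}\ref{unbiased} makes $\bTau_N$ a mean-zero moment vector at $(\btheta_0,\bzeta_0)$; \ref{psiN-conds} supplies the Lipschitz/moment control and the nondegeneracy of the sensitivity and variability matrices; \ref{cond-consist} gives smoothness of the population objective $Q_0$; and the supremum display in \ref{cond-norm} is exactly the stochastic-differentiability (stochastic equicontinuity) condition that substitutes for differentiability of $\bTau_N$ itself, so that the argument goes through even for non-smooth estimating functions such as those in the quantile example.

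I would first isolate three ingredients. (i) \emph{A CLT at the truth.} Because $\bTau_N(\btheta_0,\bzeta_0)=N^{-1}\sum_{i=1}^N\btau_i(\btheta_0,\bzeta_0)$ is an average of mean-zero summands that are i.i.d.\ within each of the $K$ subject groups and independent across groups, a Lindeberg--Feller/Lyapunov CLT — whose moment requirement is guaranteed by \ref{psiN-conds} — yields $N^{1/2}\bTau_N(\btheta_0,\bzeta_0)\stackrel{d}{\rightarrow}\mathcal{N}\bigl(0,\bv(\btheta_0,\bzeta_0)\bigr)$; here the block structure is used to see that $N^{-1}\sum_i \mathrm{Var}\{\btau_i(\btheta_0,\bzeta_0)\}$ assembles into $\bv(\btheta_0,\bzeta_0)$ with precisely the $n_k/N$ weights appearing in \eqref{tau-sensitivity} (cross-group blocks vanish by independence, and each $\bpsi_i,\bg_i$ is supported on the single group containing $i$). (ii) \emph{The population Jacobian.} By \ref{consistent}\ref{additive} and \ref{psiN-conds}, $E_{\btheta_0,\bzeta_0}\bTau_N(\btheta,\bzeta)$ is differentiable at $(\btheta_0,\bzeta_0)$ with derivative $-\bs(\btheta_0,\bzeta_0)$, and the positive-definiteness in \ref{psiN-conds} plus the assumed nonsingularity of $\bw$ force $\bs(\btheta_0,\bzeta_0)$ to have full column rank and $\bj(\btheta_0,\bzeta_0)=\bs(\btheta_0,\bzeta_0)\bw\bs^T(\btheta_0,\bzeta_0)$ to be nonsingular. (iii) \emph{The weight matrix}: $\bW\stackrel{p}{\rightarrow}\bw$ positive semidefinite is assumed at the outset.

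Combining these, Theorem \ref{thm:consist} gives $(\widehat{\btheta}_c,\widehat{\bzeta}_c)\stackrel{p}{\rightarrow}(\btheta_0,\bzeta_0)$, which by \ref{cond-norm} is interior, so eventually the estimator is an interior near-minimizer of $Q_N(\cdot\mid\bW)$. Feeding (i)--(iii) into the asymptotic-linearization step of \cite{Newey-McFadden} — with the supremum condition of \ref{cond-norm} licensing the replacement of $\bTau_N(\widehat{\btheta}_c,\widehat{\bzeta}_c)$ by its first-order population expansion around $(\btheta_0,\bzeta_0)$ — produces
\begin{align*}
N^{1/2}\begin{pmatrix}\widehat{\btheta}_c-\btheta_0 \\ \widehat{\bzeta}_c-\bzeta_0\end{pmatrix}
&= -\,\bj^{-1}(\btheta_0,\bzeta_0)\,\bs(\btheta_0,\bzeta_0)\,\bw\, N^{1/2}\bTau_N(\btheta_0,\bzeta_0) + o_p(1).
\end{align*}
Applying the CLT from (i) together with Slutsky's theorem gives a limiting normal law with covariance $\bj^{-1}\bs\,\bw\,\bv\,\bw\,\bs^T\bj^{-1}$ at $(\btheta_0,\bzeta_0)$, which is exactly $\bj^{-1}\bs\,\tilde{\bv}\,\bs^T\bj^{-1}$ since $\tilde{\bv}=\bw\bv\bw$, as claimed.

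The main obstacle is ingredient (i): the summands of $\bTau_N$ are not identically distributed — the nuisance parameters $\bzeta_{jk}$ are heterogeneous across blocks and the group indicators $\delta_i(k)$ give each $\btau_i$ a group-dependent support — so one must verify the triangular-array CLT under the monotone-allocation scheme and, more delicately, check that the limiting covariance and derivative organize into exactly the matrices $\bv$ and $\bs$ with the stated $n_k/N$ weights; by contrast, the passage from the generic GMM sandwich to the Godambe form $\bj^{-1}\bs\,\tilde{\bv}\,\bs^T\bj^{-1}$ is only bookkeeping once one sets the moment Jacobian to $-\bs$, the weight to $\bw$, and the moment covariance to $\bv$.
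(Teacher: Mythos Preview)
Your proposal is correct and takes essentially the same approach as the paper, which simply states that the result follows from Theorem 7.2 of \cite{Newey-McFadden} together with the consistency established in Theorem \ref{thm:consist}. Your write-up unpacks and verifies the hypotheses of that citation in more detail than the paper does (in particular the triangular-array CLT and the identification of the sandwich with the Godambe form), but the underlying argument is identical.
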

\noindent The proof of Theorem \ref{thm:norm} follows easily from Theorem 7.2 in \cite{Newey-McFadden} and Theorem \ref{thm:consist} above. We note that requiring $K$ to be finite implies that $N$ and $n_{\min}$ are asymptotically of the same order. We will relax this assumption in Section \ref{sec:implementation}. Conditions \ref{cond-consist} and \ref{cond-norm} allow us to consider non-differentiable kernel inference functions in the block $(j,k)$ analysis, extending \cite{Hector-Song}'s DIMM beyond CL kernel inference functions. We can now consider quantile regression, M-estimation, and more general estimation functions than the score or CL score equations.\\
A test of the over-identifying restrictions follows from \cite{Hansen} and \cite{Hector-Song}. This test is useful for detecting invalid moment restrictions, which can inform our choice of data partition and model. Formally, we show in Theorem \ref{thm:test} that the objective function $NQ_N$ evaluated at $(\widehat{\btheta}_c, \widehat{\bzeta}_c)$ follows a $\chi^2$ distribution with $(JK-1)p$ degrees of freedom. Unfortunately, it may be difficult to tell if invalid moment restrictions stem from an inappropriate data split or incorrect model specification. Residual analysis for model diagnostics can remove doubt in the latter case.
\begin{theorem}[Test of over-identifying restrictions]
\label{thm:test}
Suppose assumptions \ref{consistent}-\ref{cond-norm} hold with $(\widehat{\btheta}_c, \widehat{\bzeta}_c)$ defined in \eqref{def:combined-estimator-nuisance}. Then as $n_{\min} \rightarrow \infty$, $NQ_N(\widehat{\btheta}_c, \widehat{\bzeta}_c \lvert \bW) \stackrel{d}{\rightarrow} \chi^2_{(JK-1)p}$.
\end{theorem}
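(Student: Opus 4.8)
The statistic is the quadratic form
\[
NQ_N(\widehat{\btheta}_c,\widehat{\bzeta}_c\lvert\bW)=\bigl(N^{1/2}\bTau_N(\widehat{\btheta}_c,\widehat{\bzeta}_c)\bigr)^{\!T}\bW\bigl(N^{1/2}\bTau_N(\widehat{\btheta}_c,\widehat{\bzeta}_c)\bigr),
\]
so the plan is to (i) obtain an asymptotically linear representation of $N^{1/2}\bTau_N(\widehat{\btheta}_c,\widehat{\bzeta}_c)$ in terms of $N^{1/2}\bTau_N(\btheta_0,\bzeta_0)$ through a ``residual-maker'' matrix; (ii) apply a central limit theorem to $N^{1/2}\bTau_N(\btheta_0,\bzeta_0)$; and (iii) identify the resulting quadratic form in a standard Gaussian vector as a $\chi^2$ with degrees of freedom equal to the number of over-identifying restrictions, $\dim\bTau_N-\dim(\btheta,\bzeta)=(JKp+d)-(p+d)=(JK-1)p$. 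I would carry out the argument with the efficient weight, i.e. with $\bW=\widehat{\bV}_N^{-1}$ as in \eqref{def:combined-estimator-nuisance-opt}, so that $\bw=\bv^{-1}(\btheta_0,\bzeta_0)$; that $\widehat{\bV}_N\stackrel{p}{\rightarrow}\bv(\btheta_0,\bzeta_0)$ follows from the Lipschitz continuity in \ref{psiN-conds} and the consistency of $\widehat{\btheta}_{list},\widehat{\bzeta}_{list}$. (The pivotal $\chi^2$ limit, rather than a weighted sum of independent $\chi^2_1$ variables, relies on this efficient weighting.)

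For step (i), since $\bTau_N$ need not be differentiable, I would combine the stochastic-equicontinuity assumption \ref{cond-norm} with the $N^{1/2}$-consistency of $(\widehat{\btheta}_c,\widehat{\bzeta}_c)$ from Theorem \ref{thm:norm} and the differentiability of the population moment $E_{\btheta_0,\bzeta_0}\bTau_N$ at $(\btheta_0,\bzeta_0)$, whose derivative is $-\bs(\btheta_0,\bzeta_0)$ (continuous and of full column rank by \ref{psiN-conds}), to obtain
\[
N^{1/2}\bTau_N(\widehat{\btheta}_c,\widehat{\bzeta}_c)=N^{1/2}\bTau_N(\btheta_0,\bzeta_0)-\bs(\btheta_0,\bzeta_0)\,N^{1/2}\,\veco\bigl(\widehat{\btheta}_c-\btheta_0,\ \widehat{\bzeta}_c-\bzeta_0\bigr)+o_p(1).
\]
From the proof of Theorem \ref{thm:norm} (equivalently Theorem 7.2 of \cite{Newey-McFadden}, where the near-minimisation part of \ref{cond-norm} lets the empirical first-order conditions be replaced by their population versions) one has the Bahadur-type expansion
\[
N^{1/2}\,\veco\bigl(\widehat{\btheta}_c-\btheta_0,\ \widehat{\bzeta}_c-\bzeta_0\bigr)=\bj^{-1}\bs^{T}\bw\,N^{1/2}\bTau_N(\btheta_0,\bzeta_0)+o_p(1),
\]
with $\bs=\bs(\btheta_0,\bzeta_0)$ and $\bj=\bs^{T}\bw\bs$ the Godambe information of Theorem \ref{thm:norm}. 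Substituting yields $N^{1/2}\bTau_N(\widehat{\btheta}_c,\widehat{\bzeta}_c)=\bH\,N^{1/2}\bTau_N(\btheta_0,\bzeta_0)+o_p(1)$ with $\bH=\bI-\bs\bj^{-1}\bs^{T}\bw$.

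For step (ii), the additivity and unbiasedness in \ref{consistent} give $N^{1/2}\bTau_N(\btheta_0,\bzeta_0)=N^{-1/2}\sum_{i=1}^N\btau_i(\btheta_0,\bzeta_0)$, a centred sum that, by the i.i.d.\ central limit theorem within each group and independence across groups (with $n_k/N\to\rho_k$, finite positive-definite variances by \ref{psiN-conds}), satisfies $N^{1/2}\bTau_N(\btheta_0,\bzeta_0)\stackrel{d}{\rightarrow}\mathcal N(0,\bv)$, $\bv=\bv(\btheta_0,\bzeta_0)$. Combining with step (i), $\bW\stackrel{p}{\rightarrow}\bv^{-1}$ and tightness of $N^{1/2}\bTau_N(\widehat{\btheta}_c,\widehat{\bzeta}_c)$,
\[
NQ_N(\widehat{\btheta}_c,\widehat{\bzeta}_c\lvert\bW)=\bigl\|\bv^{-1/2}N^{1/2}\bTau_N(\widehat{\btheta}_c,\widehat{\bzeta}_c)\bigr\|^2+o_p(1)=\bigl\|\bM Z_N\bigr\|^2+o_p(1),
\]
where $Z_N=\bv^{-1/2}N^{1/2}\bTau_N(\btheta_0,\bzeta_0)\stackrel{d}{\rightarrow}\mathcal N(0,\bI)$ and, writing $\bB=\bv^{-1/2}\bs$, $\bM=\bv^{-1/2}\bH\bv^{1/2}=\bI-\bB(\bB^{T}\bB)^{-1}\bB^{T}$ (using $\bw=\bv^{-1}$, $\bj=\bs^{T}\bv^{-1}\bs$). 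Thus $\bM$ is the orthogonal projection onto the orthogonal complement of the column space of $\bB$; since $\bs$ has full column rank $p+d$ and $\bTau_N$ has dimension $JKp+d$, $\bM$ is symmetric idempotent of rank $(JK-1)p$, so by the continuous mapping theorem $NQ_N(\widehat{\btheta}_c,\widehat{\bzeta}_c\lvert\bW)\stackrel{d}{\rightarrow}Z^{T}\bM Z\sim\chi^2_{(JK-1)p}$ with $Z\sim\mathcal N(0,\bI)$, as claimed.

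The main obstacle is step (i): linearising the non-smooth $N^{1/2}\bTau_N$ at the estimator. This is exactly what assumption \ref{cond-norm} is built for, and the delicate part is running the stochastic-equicontinuity bound jointly with the approximate minimisation so the argument parallels the proof of Theorem 7.2 in \cite{Newey-McFadden}; once this representation is secured, steps (ii)--(iii) are routine linear algebra and a CLT. The remaining technical points are bookkeeping: tracking the group structure (the indicators $\delta_i(k)$ and the ratios $n_k/N\to\rho_k$) so that the limiting covariance is exactly $\bv(\btheta_0,\bzeta_0)$, and verifying $\bs(\btheta_0,\bzeta_0)$ has full column rank (from positive-definiteness of the sensitivity in \ref{psiN-conds}), which fixes the rank of $\bM$ and hence the $(JK-1)p$ degrees of freedom.
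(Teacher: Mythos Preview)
Your proposal is correct and follows the standard Hansen $J$-test argument, which is precisely what the paper defers to: the paper gives no detailed proof here, stating only that it ``can be carried out with some minor changes from that of Theorem 3 in \cite{Hector-Song}.'' Your route---linearising $N^{1/2}\bTau_N(\widehat{\btheta}_c,\widehat{\bzeta}_c)$ via the stochastic equicontinuity in \ref{cond-norm}, substituting the Bahadur expansion from Theorem~\ref{thm:norm}, and reducing to a quadratic form in a standard Gaussian through the idempotent $\bM=\bI-\bB(\bB^T\bB)^{-1}\bB^T$ of rank $(JKp+d)-(p+d)=(JK-1)p$---is exactly that argument. You are also right to flag that the pivotal $\chi^2$ limit requires the efficient weight $\bW=\widehat{\bV}_N^{-1}$; although the theorem is nominally stated for $(\widehat{\btheta}_c,\widehat{\bzeta}_c)$ from \eqref{def:combined-estimator-nuisance} with general $\bW$, the surrounding discussion (Hansen's test, Theorem~\ref{thm:weight-matrix}) makes clear the optimal weight is intended, and your parenthetical caveat handles this cleanly.
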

\noindent The proof of Theorem \ref{thm:test} can be carried out with some minor changes from that of Theorem 3 in \cite{Hector-Song}. The GMM provides an objective function with which to do model selection even when the block analyses do not, such as with GEE and M-estimation. In the following, Theorem \ref{thm:weight-matrix} and Corollary \ref{cor-optimality} show our combined GMM estimator derived from \eqref{def:combined-estimator-nuisance-opt} is optimal in the sense defined by \cite{Hansen}: it has an asymptotic covariance matrix at least as small (in terms of the Loewner ordering) as any other estimator exploiting the same over-identifying restrictions. We refer to this property as ``Hansen optimality''.
\begin{theorem}
\label{thm:weight-matrix}
Suppose assumptions \ref{consistent}-\ref{psiN-conds} hold. Then as $n_{\min} \rightarrow \infty$, $\widehat{\bV}_{N} \stackrel{p}{\rightarrow} \bv(\btheta_0, \bzeta_0)$, i.e. $\bw=\bv^{-1}(\btheta_0, \bzeta_0)$.
\end{theorem}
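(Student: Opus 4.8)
\noindent The plan is a standard plug-in argument: show that $\widehat{\bV}_N$ converges in probability to the variability matrix $\bv(\btheta_0,\bzeta_0)$, using the Lipschitz condition \ref{psiN-conds-i} to absorb the error from replacing the true parameter values by the block estimates. By unbiasedness (\ref{consistent}), $E_{\btheta_0,\bzeta_0}\btau_i(\btheta_0,\bzeta_0)=\boldsymbol{0}$, so $\bv(\btheta_0,\bzeta_0)=E_{\btheta_0,\bzeta_0}\{\btau_i(\btheta_0,\bzeta_0)^{\otimes 2}\}$, with the $n_k/N$ factors entering just as in the sensitivity matrix $\bs(\btheta,\bzeta)$. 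Write
\[
\widehat{\bV}_N - \bv(\btheta_0,\bzeta_0) = \frac{1}{N}\sum_{i=1}^N \left\{ \btau_i(\widehat{\btheta}_{list},\widehat{\bzeta}_{list})^{\otimes 2} - \btau_i(\btheta_0,\bzeta_0)^{\otimes 2} \right\} + \left\{ \frac{1}{N}\sum_{i=1}^N \btau_i(\btheta_0,\bzeta_0)^{\otimes 2} - \bv(\btheta_0,\bzeta_0) \right\} =: (\mathrm{I}) + (\mathrm{II}),
\]
where $\btau_i(\btheta_0,\bzeta_0)$ evaluates the block $(j,k)$ kernel at $(\btheta_0,\bzeta_{jk0})$ while $\btau_i(\widehat{\btheta}_{list},\widehat{\bzeta}_{list})$ uses $(\widehat{\btheta}_{jk},\widehat{\bzeta}_{jk})$ in block $(j,k)$. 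It suffices to show $(\mathrm{I})\stackrel{p}{\rightarrow}\boldsymbol{0}$ and $(\mathrm{II})\stackrel{p}{\rightarrow}\boldsymbol{0}$.

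For $(\mathrm{II})$ I would use a within-group weak law of large numbers. Since $J,K$ are fixed and $n_{\min}\to\infty$, each $n_k\to\infty$; subjects within group $k$ are i.i.d., and for $i$ in group $k$ the matrix $\btau_i(\btheta_0,\bzeta_0)^{\otimes 2}$ is supported only on the coordinates associated with group $k$ and has finite mean by the finiteness of the variability matrix in \ref{psiN-conds}. Hence $n_k^{-1}\sum_{i\in\mathrm{group}\,k}\btau_i(\btheta_0,\bzeta_0)^{\otimes 2}$ converges in probability to its expectation, and writing $N^{-1}\sum_{i=1}^N(\cdot) = \sum_{k=1}^K (n_k/N)\,n_k^{-1}\sum_{i\in\mathrm{group}\,k}(\cdot)$ with bounded weights $n_k/N$ gives $(\mathrm{II})\stackrel{p}{\rightarrow}\boldsymbol{0}$, the $n_k/N$ factors being exactly those built into $\bv(\btheta_0,\bzeta_0)$.

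The substance of the proof is $(\mathrm{I})$. From $\ba^{\otimes 2}-\bb^{\otimes 2} = \ba(\ba-\bb)^T + (\ba-\bb)\bb^T$ one gets $\|\ba^{\otimes 2}-\bb^{\otimes 2}\|\le(\|\ba\|+\|\bb\|)\|\ba-\bb\|$. Block-specific consistency (Theorem 3.4 of \cite{Song}, which holds under \ref{consistent}) gives $\epsilon_N := \max_{j,k}\|(\widehat{\btheta}_{jk},\widehat{\bzeta}_{jk})-(\btheta_0,\bzeta_{jk0})\|\stackrel{p}{\rightarrow}0$, so with probability tending to one every $(\widehat{\btheta}_{jk},\widehat{\bzeta}_{jk})$ lies in the Lipschitz neighbourhood of \ref{psiN-conds-i}. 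On that event, for every $i$ in group $k$,
\[
\left\| \btau_i(\widehat{\btheta}_{list},\widehat{\bzeta}_{list}) - \btau_i(\btheta_0,\bzeta_0) \right\| \le \left( \sum_{j=1}^J (c_{jk}^2 + b_{jk}^2) \right)^{1/2}\epsilon_N \le C\epsilon_N, \qquad C := \max_k \left( \sum_{j=1}^J (c_{jk}^2+b_{jk}^2) \right)^{1/2},
\]
a bound uniform in $i$. Consequently $\|(\mathrm{I})\| \le C\epsilon_N\, N^{-1}\sum_{i=1}^N \left( \|\btau_i(\widehat{\btheta}_{list},\widehat{\bzeta}_{list})\| + \|\btau_i(\btheta_0,\bzeta_0)\| \right) \le C\epsilon_N\,(2\overline{T}_N + C\epsilon_N)$, where $\overline{T}_N := N^{-1}\sum_{i=1}^N\|\btau_i(\btheta_0,\bzeta_0)\| \stackrel{p}{\rightarrow} E_{\btheta_0,\bzeta_0}\|\btau_i(\btheta_0,\bzeta_0)\| < \infty$ (a finite first moment following from the finite second moment) by the same within-group WLLN. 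Hence $(\mathrm{I}) = o_p(1)\cdot O_p(1) = o_p(1)$, and therefore $\widehat{\bV}_N\stackrel{p}{\rightarrow}\bv(\btheta_0,\bzeta_0)$. For the optimal weighting $\bW = \widehat{\bV}_N^{-1}$ of \eqref{def:combined-estimator-nuisance-opt}, the standing assumption $\bW\stackrel{p}{\rightarrow}\bw$ together with continuity of matrix inversion at the nonsingular matrix $\bv(\btheta_0,\bzeta_0)$ (positive definite by \ref{psiN-conds}, $\widehat{\bV}_N$ nonsingular by assumption) gives $\bW = \widehat{\bV}_N^{-1}\stackrel{p}{\rightarrow}\bv^{-1}(\btheta_0,\bzeta_0)$, i.e. $\bw = \bv^{-1}(\btheta_0,\bzeta_0)$.

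The expected main obstacle is not analytic depth but the lack of independence among the summands of $\widehat{\bV}_N$: $\widehat{\btheta}_{jk}$ and $\widehat{\bzeta}_{jk}$ are functions of precisely the subjects in group $k$ being averaged, so a law of large numbers cannot be applied directly to $N^{-1}\sum_i \btau_i(\widehat{\btheta}_{list},\widehat{\bzeta}_{list})^{\otimes 2}$. The decomposition into the oracle term $(\mathrm{II})$, with summands i.i.d.\ within groups, plus a remainder $(\mathrm{I})$ controlled uniformly in $i$ via \ref{psiN-conds-i}, is what resolves this; the remaining work is the routine bookkeeping of the $n_k/N$ weights so that the limit is exactly $\bv(\btheta_0,\bzeta_0)$.
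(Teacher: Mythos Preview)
Your proposal is correct and follows essentially the same strategy the paper indicates: consistency of the block estimators to control the plug-in error, plus a law-of-large-numbers argument for the oracle term. The only cosmetic difference is that the paper's hint cites the Central Limit Theorem whereas you invoke the weak law of large numbers for term $(\mathrm{II})$; since Theorem~\ref{thm:weight-matrix} asserts only convergence in probability (the $O_p(N^{-1/2})$ rate appears later in Lemma~\ref{lemma:add-results} under the stronger assumption \ref{cond-equiv}), your use of the WLLN is in fact the more natural tool here and suffices.
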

\begin{proof}
The proof uses the consistency of the block estimators and the Central Limit Theorem, and is given in the Supplemental Material.
\end{proof}
\begin{corollary}[Hansen optimality]
\label{cor-optimality}
Suppose assumptions \ref{consistent}-\ref{cond-norm} hold with $(\widehat{\btheta}_c, \widehat{\bzeta}_c)$ defined in \eqref{def:combined-estimator-nuisance}. Let $\bj(\btheta, \bzeta)$ as given in Theorem \ref{thm:norm}. Then as $n_{\min} \rightarrow \infty$,
\begin{align*}
N^{1/2}  \left( \begin{array}{c} \widehat{\btheta}_{opt} -\btheta_0 \\ \widehat{\bzeta}_{opt} -\bzeta_0 \end{array} \right) \stackrel{d}{\rightarrow} \mathcal{N} \left(0, \bj^{-1}(\btheta_0, \bzeta_0)   \right).
\end{align*}
\end{corollary}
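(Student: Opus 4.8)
\noindent\emph{Proof proposal.} The plan is to derive the corollary as the specialization of Theorem~\ref{thm:norm} to the data-dependent weight $\bW=\widehat{\bV}^{-1}_N$, using Theorem~\ref{thm:weight-matrix} to pin down its probability limit and then simplifying the resulting sandwich covariance. First I would argue that $(\widehat{\btheta}_{opt},\widehat{\bzeta}_{opt})$ of \eqref{def:combined-estimator-nuisance-opt} is a bona fide instance of the generic estimator \eqref{def:combined-estimator-nuisance}: since $\widehat{\bV}_N$ is constructed only from the block-specific estimators $\widehat{\btheta}_{list},\widehat{\bzeta}_{list}$, there is no circular dependence on $\widehat{\btheta}_{opt}$, and the only extra requirement for Theorems~\ref{thm:consist} and~\ref{thm:norm} to apply is that $\bW$ converge in probability to a deterministic positive-definite matrix. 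This is supplied by Theorem~\ref{thm:weight-matrix}: under assumptions~\ref{consistent}--\ref{psiN-conds} one has $\widehat{\bV}_N\stackrel{p}{\rightarrow}\bv(\btheta_0,\bzeta_0)$, and since $\bv(\btheta_0,\bzeta_0)$ is positive definite (assumption~\ref{psiN-conds}) matrix inversion is continuous there, so the continuous mapping theorem gives $\bW=\widehat{\bV}^{-1}_N\stackrel{p}{\rightarrow}\bv^{-1}(\btheta_0,\bzeta_0)=:\bw$.

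Next I would check that assumptions~\ref{cond-consist} and~\ref{cond-norm} hold with this $\bw$: the stochastic-equicontinuity display in~\ref{cond-norm} does not involve the weight at all, while $Q_0(\btheta,\bzeta\lvert\bW)=E_{\btheta,\bzeta}\{\bTau^T_N(\btheta,\bzeta)\}\,\bv^{-1}(\btheta_0,\bzeta_0)\,E_{\btheta,\bzeta}\{\bTau_N(\btheta,\bzeta)\}$ inherits twice-continuous differentiability near $(\btheta_0,\bzeta_0)$ from the smoothness of the sensitivity matrix assumed in~\ref{psiN-conds}. Theorem~\ref{thm:norm} then applies and yields limiting covariance $\bj^{-1}\bs\tilde{\bv}\bs^T\bj^{-1}$ with $\tilde{\bv}(\btheta,\bzeta)=\bw\bv(\btheta,\bzeta)\bw$ and $\bj(\btheta,\bzeta)=\bs(\btheta,\bzeta)\bw\bs^T(\btheta,\bzeta)$. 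Substituting $\bw=\bv^{-1}(\btheta_0,\bzeta_0)$ at the true parameter gives $\tilde{\bv}(\btheta_0,\bzeta_0)=\bv^{-1}\bv\bv^{-1}=\bv^{-1}=\bw$, hence $\bs\tilde{\bv}\bs^T=\bs\bw\bs^T=\bj$, and the sandwich telescopes: $\bj^{-1}\bs\tilde{\bv}\bs^T\bj^{-1}=\bj^{-1}\bj\bj^{-1}=\bj^{-1}(\btheta_0,\bzeta_0)$, which is precisely the stated limiting normal law.

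To support the ``Hansen optimality'' terminology used in the surrounding text, I would also record the standard GMM efficiency bound: for an arbitrary positive-semidefinite limiting weight $\bw$ with $\bj_{\bw}:=\bs\bw\bs^T$ nonsingular, Theorem~\ref{thm:norm} gives covariance $\bj_{\bw}^{-1}\bs\bw\bv\bw\bs^T\bj_{\bw}^{-1}$ at $(\btheta_0,\bzeta_0)$. Setting $\bC=\bv^{1/2}\bw\bs^T\bj_{\bw}^{-1}$ and $\bD=\bv^{-1/2}\bs^T(\bs\bv^{-1}\bs^T)^{-1}$, one checks $\bD^T\bC=\bD^T\bD=(\bs\bv^{-1}\bs^T)^{-1}$, so $\bj_{\bw}^{-1}\bs\bw\bv\bw\bs^T\bj_{\bw}^{-1}-(\bs\bv^{-1}\bs^T)^{-1}=(\bC-\bD)^T(\bC-\bD)\succeq\boldsymbol{0}$; the optimal weight attains the lower bound $(\bs\bv^{-1}\bs^T)^{-1}=\bj^{-1}(\btheta_0,\bzeta_0)$, so no GMM estimator built from the same $JKp+d$ moment conditions can have a smaller asymptotic covariance in the Loewner order.

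The main obstacle is the bookkeeping in the first step: one must verify carefully that replacing the deterministic optimal weight by the consistent plug-in $\widehat{\bV}^{-1}_N$ does not disturb the first-order asymptotics underlying Theorems~\ref{thm:consist}--\ref{thm:norm}, and that the regularity conditions~\ref{cond-consist}--\ref{cond-norm} are genuinely inherited for this choice of weight; once this is settled, the remaining matrix manipulations are elementary.
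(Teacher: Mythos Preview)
Your proposal is correct and matches the paper's intended argument: the corollary is simply the specialization of Theorem~\ref{thm:norm} to the weight $\bW=\widehat{\bV}_N^{-1}$, with Theorem~\ref{thm:weight-matrix} supplying $\bw=\bv^{-1}(\btheta_0,\bzeta_0)$ so that the sandwich $\bj^{-1}\bs\bw\bv\bw\bs^T\bj^{-1}$ collapses to $\bj^{-1}$. The paper does not write out a proof, treating this as immediate from the two preceding theorems; your additional Loewner-order optimality calculation is standard and not spelled out in the paper but is consistent with the ``Hansen optimality'' label.
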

\noindent The theoretical results given in Theorems \ref{thm:consist}-\ref{thm:weight-matrix} provide a framework for constructing asymptotic confidence intervals and conducting hypothesis tests, so that we can perform inference for $\btheta$ when $M$ and/or $N$ are very large. Using an optimal weight matrix improves statistical power so DDIMM may detect some signals that other methods may miss. Since we consider a broad class of models $\mathcal{P}$, there are no general efficiency results about the block-specific estimator $\widehat{\btheta}_{jk}$. When a learning method based on $\bPsi_{jk}$ has known efficiency results and performs well enough, DDIMM generally inherits ``local'' efficiency to achieve overall efficiency.
\begin{remark}\leavevmode We discuss efficiency for selected examples in Section \ref{sec:examples}.
\begin{enumerate}[leftmargin=*, align=left, label=(\roman*), wide]
\item In Example \ref{subsec:likelihood}, when the score function exists and satisfies mild regularity conditions, its variance is given by Fisher's information, and is a lower bound on the variances of estimating functions for $\btheta$ and $\bzeta$. This, coupled with Hansen's optimality, means that using the score function for $\bpsi_{jk}$ and $\bg_{jk}$ yields an efficient estimator of $\btheta$ and $\bzeta$. In an unpublished dissertation, \cite{Jin} studied the efficiency of the pairwise composite likelihood under different correlation structures. \cite{Hector-Song} showed empirically that the efficiency of the pairwise composite likelihood propagates to the combined estimator.
\item In Example \ref{subsec:gee}, it is known that the GEE estimator $\widehat{\btheta}_{jk}$ in Example \ref{subsec:gee} is semi-parametrically efficient when the correlation structure of the response $\by_{i,jk}$ is correctly specified. This, coupled with Hansen's optimality, means that using GEE's for $\bpsi_{jk}$ with the correct correlation structure of the response $\by_{i,jk}$ yields an efficient estimator of $\btheta$.
\end{enumerate}
\end{remark}
\begin{remark}
The GMM estimator $(\widehat{\btheta}_{opt}, \widehat{\bzeta}_{opt})$ can be interpreted as maximizing an extension of the confidence distribution density, as discussed in \cite{Hector-Song}. The confidence distribution approach is used for independent data in \cite{Xie-Singh}. Briefly, we can define the confidence estimating function (CEF) as $U(\btheta, \bzeta)=\Phi ( N^{1/2} \widehat{\bV}_{N}^{-1/2} \bTau_N(\btheta, \bzeta) )$, where $\Phi(\cdot)$ is the $(JKp+d)$-variate standard normal distribution function. Clearly, $U(\btheta, \bzeta)$ is asymptotically standard uniform at $(\btheta_0, \bzeta_0)$ as long as $\widehat{\bV}_{N}$ is a consistent estimator of the covariance of $\bTau_N$. Then we can define the density of the CEF as $u(\btheta, \bzeta)=\phi ( N^{1/2} \widehat{\bV}_{N}^{-1/2} \bTau_N(\btheta, \bzeta) )$. Maximizing $u(\btheta, \bzeta)$ with respect to $(\btheta, \bzeta)$ yields the minimization defined in \eqref{def:combined-estimator-nuisance-opt}.
\end{remark}
\noindent By framing our estimator as a GMM estimator, the theoretical framework of DIMM established only for CL can be extended to include a data split at the subject level and a generalization of $\bPsi_{jk}$ and $\bG_{jk}$. Adding moment conditions allows the proposed method to enjoy the power and versatility of the GMM, and the necessary theoretical results to support its use. This divide-and-conquer strategy benefits from handling low dimensional blocks of data and estimating equations, yielding tremendous computational gains.

\section{DISTRIBUTED ESTIMATION AND INFERENCE}
\label{sec:implementation}

Despite the computational gains offered by the divide-and-combine procedure and the GMM estimator, iteratively finding the solution $(\widehat{\btheta}_{opt}, \widehat{\bzeta}_{opt})$ (or $(\widehat{\btheta}_c, \widehat{\bzeta}_c)$) to \eqref{def:combined-estimator-nuisance-opt} can be slow due to the high-dimensionality of parameter $\bzeta$ and the need to repeatedly evaluate $\bPsi_{jk}$ and $\bG_{jk}$. To overcome this computational bottleneck, we propose a meta-estimator derived from \eqref{def:combined-estimator-nuisance-opt} that delivers a closed-form estimator via a linear function of block estimates $(\widehat{\btheta}_{list}, \widehat{\bzeta}_{list})$. 
We define the DDIMM estimator for $(\btheta, \bzeta)$:
\begin{align}
\left( \begin{array}{c}
\widehat{\btheta}_{DDIMM}\\
\widehat{\bzeta}_{DDIMM}
\end{array} \right)&=\left( \sum \limits_{k=1}^K \sum \limits_{i=1}^J 
n^2_k \widehat{\bC}_{k,i}
\right)^{-1} 
\sum \limits_{k=1}^K \sum \limits_{i=1}^J n^2_k  \widehat{\bC}_{k,i} 
\left( \begin{array}{c} \widehat{\btheta}_{ik} \\ \widehat{\bzeta}_{list} \end{array} \right). 
\label{one-step-estimator-both}
\end{align}
where $\widehat{\bC}_{k,i}$ is a function of sample variability and sensitivity matrices and block-specific estimators $\widehat{\btheta}_{jk}$ and $\widehat{\bzeta}_{jk}$ defined in detail in Section \ref{subsec:implementation:def-C_ki}. If we do not plan to conduct inference for $\bzeta$, which is treated as a nuisance parameter, taking $\left[ \widehat{\bC}^{-1} \right]_{p:}$ to be rows $1$ to $p$ of matrix $( \sum_{k=1}^K \sum_{i=1}^J 
n^2_k \widehat{\bC}_{k,i}
)^{-1}$ leads to the closed-form estimator of $\btheta$:
\begin{align}
\widehat{\btheta}_{DDIMM}
&=\left[ \widehat{\bC}^{-1} \right]_{p:}
\sum \limits_{k=1}^K \sum \limits_{i=1}^J 
n^2_k  \widehat{\bC}_{k,i} 
\left( \begin{array}{cc} \widehat{\btheta}^T_{ik} & \widehat{\bzeta}^T_{list} \end{array} \right)^T. 
\label{one-step-estimator}
\end{align}
We briefly define sample sensitivity matrices that will appear in the main body of the paper. Let $\bS^{\btheta}_{\bpsi_{jk}}(\btheta, \bzeta_{jk})$ be a $n^{1/2}_k$-consistent sample estimator of $\bs^{\btheta}_{\bpsi_{jk}}(\btheta, \bzeta_{jk})$, and similarly define $\bS^{\bzeta}_{\bpsi_{jk}}(\btheta, \bzeta_{jk})$, $\bS^{\btheta}_{\bg_{jk}}(\btheta, \bzeta_{jk})$ and $\bS^{\bzeta}_{\bg_{jk}}(\btheta, \bzeta_{jk})$. Let
\begin{align*}
\bS_{jk}(\btheta, \bzeta_{jk})&=\left( \begin{array}{cc} 
\bS^{\btheta}_{\bpsi_{jk}}(\btheta, \bzeta_{jk}) & 
\bS^{\bzeta}_{\bpsi_{jk}}(\btheta, \bzeta_{jk})\\
\bS^{\btheta}_{\bg_{jk}}(\btheta, \bzeta_{jk}) & 
\bS^{\bzeta}_{\bg_{jk}}(\btheta, \bzeta_{jk})
\end{array} \right).
\end{align*}
Note that the uppercase $\bS$ denotes the sample sensitivity matrix, and the lower-case $\bs$ denotes the population sensitivity matrix. Let $\widehat{\bS}_{jk}=\bS_{jk}(\widehat{\btheta}_{jk}, \widehat{\bzeta}_{jk})$ and similarly define $\widehat{\bS}^{\btheta}_{\bpsi_{jk}}$, $\widehat{\bS}^{\bzeta}_{\bpsi_{jk}}$, $\widehat{\bS}^{\btheta}_{\bg_{jk}}$ and $\widehat{\bS}^{\bzeta}_{\bg_{jk}}$. Sensitivity formulas are summarized in Table \ref{sensitivity-summary} in Appendix \ref{sec:appendix:technical:sensitivity}. \\
The DDIMM estimator in \eqref{one-step-estimator} can be implemented in a fully parallelized and scalable computational scheme, where only one pass through each block of data is required. The block analyses are run on parallel CPUs, and return the values of summary statistics $\{ \widehat{\btheta}_{jk}, \widehat{\bzeta}_{jk}, \bpsi_{i,jk} (\widehat{\btheta}_{jk}; \widehat{\bzeta}_{jk}) , \allowbreak \bg_{i,jk}(\widehat{\bzeta}_{jk}; \widehat{\btheta}_{jk}), \allowbreak \widehat{\bS}_{jk} \}_{j,k=1}^{J,K}$ to the main computing node, which computes $\widehat{\btheta}_{DDIMM}$ in \eqref{one-step-estimator} in one step.

\subsection{Construction of $\widehat{\bC}_{k,i}$}
\label{subsec:implementation:def-C_ki}

We give details on the construction of $\widehat{\bC}_{k,i}$. Readers may wish to omit this section on a first reading, as these details are not necessary for an understanding of the main body of the paper. We consider the optimal case where the GMM weighting matrix takes the form:
\begin{align*}
\bW=\widehat{\bV}^{-1}_N=\left(\begin{array}{cc}
\widehat{\bV}_{N, \bpsi} & \widehat{\bV}_{N, \bpsi \bg} \\
\widehat{\bV}^T_{N, \bpsi \bg} & \widehat{\bV}_{N, \bg}
\end{array} \right)^{-1}=\left( \begin{array}{cc}
\widehat{\bV}^{\bpsi}_N & \widehat{\bV}^{\bpsi \bg}_N \\
\widehat{\bV}^{\bpsi \bg ~T}_N & \widehat{\bV}^{\bg}_N
\end{array} \right).
\end{align*}
For convenience, we introduce a subsetting operation, with technical details available in Appendix \ref{sec:appendix:technical:subset}: we let $\left[ \widehat{\bV}^{\bpsi}_N \right]_{ij:k}$ subset the rows for the parameters corresponding to block $(i,k)$ and the columns for the parameters corresponding to block $(j,k)$ of matrix $\widehat{\bV}^{\bpsi}_N$. Similarly define $\left[ \widehat{\bV}^{\bg}_N \right]_{ij:k}$, and $\left[ \widehat{\bV}^{\bpsi \bg}_N \right]_{ij:k}$. For $\boeta \in \left\{ \btheta, \bzeta \right\}$, let \\
\scalebox{0.96}{\parbox{\linewidth}{%
\begin{align*}
\widehat{\bA}^{\boeta}_{k,ij}&= 
\left( \widehat{\bS}^{\btheta~T}_{\bpsi_{jk}} \left[ \widehat{\bV}^{\bpsi}_N \right]_{ji:k} + \widehat{\bS}^{\btheta~T}_{\bg_{jk}} \left[ \widehat{\bV}^{\bpsi \bg~ T}_N \right]_{ji:k} \right) 
\widehat{\bS}^{\boeta}_{\bpsi_{ik}} + \left( \widehat{\bS}^{\btheta~T}_{\bpsi_{jk}} \left[ \widehat{\bV}^{\bpsi \bg}_N \right]_{ji:k} + \widehat{\bS}^{\btheta~T}_{\bg_{jk}} \left[ \widehat{\bV}^{\bg}_N \right]_{ji:k} \right) 
\widehat{\bS}^{\boeta}_{\bg_{ik}} ,\\
\widehat{\bB}^{\boeta}_{k,ij}&= \left( \widehat{\bS}^{\bzeta~T}_{\bpsi_{jk}} \left[ \widehat{\bV}^{\bpsi}_N \right]_{ji:k} + \widehat{\bS}^{\bzeta~T}_{\bg_{jk}} \left[ \widehat{\bV}^{\bpsi \bg~ T}_N \right]_{ji:k} \right)
\widehat{\bS}^{\boeta}_{\bpsi_{ik}} +\left( \widehat{\bS}^{\bzeta~T}_{\bpsi_{jk}} \left[ \widehat{\bV}^{\bpsi \bg}_N\right]_{ji:k} + \widehat{\bS}^{\bzeta~T}_{\bg_{jk}} \left[ \widehat{\bV}^{\bg}_N \right]_{ji:k} \right)
\widehat{\bS}^{\boeta}_{\bg_{ik}}.
\end{align*}
}}\\
\noindent Define $D^{ik}$ as the sum of the dimensions of $\bzeta_{11}, \ldots, \bzeta_{i-1k}$, and $D^k$ as the sum of the dimensions of $\bzeta_{11}, \ldots, \bzeta_{Jk-1}$, with technical details in Appendix \ref{sec:appendix:technical:Dik}. Let $d_k=\sum_{j=1}^J d_{jk}$. Then we can define the following,
\begin{align}
\widehat{\bC}_{k,i}&=\left( \begin{array}{cccc}
\sum \limits_{j=1}^J \widehat{\bA}^{\btheta}_{k,ij} & \boldsymbol{0}_{p\times D^{ik}} & \sum \limits_{j=1}^J \widehat{\bA}^{\bzeta}_{k,ij} & \boldsymbol{0}_{p\times (d-d_{ik}-D^{ik})} \\
\multicolumn{4}{@{}c@{\quad}}{\boldsymbol{0}_{D^{k} \times (p+d)}}\\
\widehat{\bB}^{\btheta}_{k,i1} & \boldsymbol{0}_{d_{1k}\times D^{ik}} & \widehat{\bB}^{\bzeta}_{k,i1} & \boldsymbol{0}_{d_{1k}\times (d-d_{ik}-D^{ik})} \\
\multicolumn{4}{@{}c@{\quad}}{ \vdots }\\
\widehat{\bB}^{\btheta}_{k,iJ} & \boldsymbol{0}_{d_{Jk}\times D^{ik}} & \widehat{\bB}^{\bzeta}_{k,iJ} & \boldsymbol{0}_{d_{Jk}\times (d-d_{ik}-D^{ik})} \\
\multicolumn{4}{@{}c@{\quad}}{ \boldsymbol{0}_{(d-d_k-D^{k}) \times (p+d)}}
\end{array} \right).
\label{def:C_ki}
\end{align}

\subsection{Asymptotic results for $K$ and $J$ fixed}
\label{subsec:implementation:asymptotics-fixed}

In this section we assume that $K$ and $J$ are fixed, which will be relaxed in Sections \ref{subsec:implementation:asymptotics-Kdiv} and \ref{subsec:implementation:asymptotics-KJdiv}. Recall that we assume subjects are monotonically allocated to subject groups: as $n_{\min} \rightarrow \infty$, a subject cannot be reallocated to another group once it has been assigned to a subject group. Consider the following condition:
\begin{enumerate}[label=(A.\arabic*)]
\setcounter{enumi}{\value{EE-conds}}
\item For each $j=1, \ldots, J$, $k=1, \ldots, K$, $\widehat{\btheta}_{jk}=\btheta_0+O_p(n_k^{-1/2})$ and $\widehat{\bzeta}_{jk}=\bzeta_{jk0}+O_p(n_k^{-1/2})$. For any $\delta_N \rightarrow 0$, \label{cond-equiv}
\setcounter{EE-conds}{\value{enumi}}
\end{enumerate}
\begin{equation*}
\resizebox{\linewidth}{!} 
{
    $
\sup \limits_{\left\| (\btheta, \bzeta) - (\btheta_0, \bzeta_0) \right\| \leq \delta_N} \frac{N^{1/2}}{1+N^{1/2} \left\| (\btheta, \bzeta) - (\btheta_0, \bzeta_0) \right\| } \left\| \bTau_N(\btheta, \bzeta) - \bTau_N(\btheta_0, \bzeta_0) - E_{\btheta_0, \bzeta_0} \bTau_N(\btheta, \bzeta) \right\| =O_p(N^{-1/2}).$
}
\end{equation*}\\
Consequently, some large-sample results can be established which are helpful in studying the asymptotic behaviour of $\widehat{\btheta}_{DDIMM}$.
\begin{lemma}
\label{lemma:add-results}
Suppose assumptions \ref{consistent}, \ref{psiN-conds} and \ref{cond-equiv} hold. Then we have consistent estimation of information matrices:
\begin{align*}
&\widehat{\bV}_{N}=\bv(\btheta_0, \bzeta_0) + O_p(N^{-1/2}),\\
&\widehat{\bS}_{jk}=\bs_{jk}(\btheta_0, \bzeta_{jk0})+O_p(n^{-1/2}_k)~~\mbox{ for each }j,k, \mbox{ and}\\
&\frac{1}{N^2} \sum \limits_{k=1}^K \sum \limits_{i=1}^J n^2_k \widehat{\bC}_{k,i} =\widehat{\bS}^T \widehat{\bV}^{-1}_N \widehat{\bS}=\bj(\btheta_0, \bzeta_0)+O_p(N^{-1/2}),\\
&\mbox{where}~~~\widehat{\bS}=\left( \begin{array}{cc}
\veco \left( \frac{n_k}{N} \widehat{\bS}^{\btheta}_{\bpsi_{jk}} \right)_{j=1,k=1}^{J,K} & \mbox{diag} \left\{ \frac{n_k}{N} \widehat{\bS}^{\bzeta}_{\bpsi_{jk}} \right\}_{j=1,k=1}^{J,K}\\
\veco \left( \frac{n_k}{N} \widehat{\bS}^{\btheta}_{g_{jk}} \right)_{j=1,k=1}^{J,K} & \mbox{diag} \left\{ \frac{n_k}{N} \widehat{\bS}^{\bzeta}_{g_{jk}} \right\}_{j=1,k=1}^{J,K}
\end{array} \right).
\end{align*}
\end{lemma}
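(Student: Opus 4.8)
The plan is to dispatch the three displayed assertions one at a time, in each case by a plug-in expansion about $(\btheta_0,\bzeta_0)$ combined with a law-of-large-numbers/central-limit argument, and then to establish the matrix identity in the last display by a direct block expansion. Throughout I use that $K$ and $J$ are fixed, so $n_k\le N$ and $n_{\min}\asymp N$, and that $(\widehat\btheta_{jk},\widehat\bzeta_{jk})-(\btheta_0,\bzeta_{jk0})=O_p(n_k^{-1/2})$ by \ref{cond-equiv}.

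I would start with the sensitivity statement, which is cleanest. Since $\bS_{jk}(\btheta,\bzeta_{jk})$ is by construction an $n_k^{1/2}$-consistent estimator of $\bs_{jk}(\btheta,\bzeta_{jk})$ (uniformly on a shrinking neighbourhood of $(\btheta_0,\bzeta_{jk0})$), write $\widehat\bS_{jk}-\bs_{jk}(\btheta_0,\bzeta_{jk0})=\{\bS_{jk}(\widehat\btheta_{jk},\widehat\bzeta_{jk})-\bs_{jk}(\widehat\btheta_{jk},\widehat\bzeta_{jk})\}+\{\bs_{jk}(\widehat\btheta_{jk},\widehat\bzeta_{jk})-\bs_{jk}(\btheta_0,\bzeta_{jk0})\}$. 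The first bracket is $O_p(n_k^{-1/2})$ by $n_k^{1/2}$-consistency; the second is $O_p(n_k^{-1/2})$ because $\bs_{jk}$ is continuous, indeed locally Lipschitz, on a compact neighbourhood of $(\btheta_0,\bzeta_{jk0})$ by \ref{psiN-conds} and $(\widehat\btheta_{jk},\widehat\bzeta_{jk})-(\btheta_0,\bzeta_{jk0})=O_p(n_k^{-1/2})$. Collecting the bounded factors $n_k/N$ in $\widehat\bS$ and $\bs(\btheta_0,\bzeta_0)$ (see \eqref{tau-sensitivity}) then gives $\widehat\bS=\bs(\btheta_0,\bzeta_0)+O_p(N^{-1/2})$.

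For $\widehat\bV_N$ in \eqref{V_hat_N} I would split, using that $E_{\btheta_0,\bzeta_0}\btau_i(\btheta_0,\bzeta_0)=\boldsymbol{0}$ by unbiasedness \ref{consistent} \ref{unbiased} so that $\bv(\btheta_0,\bzeta_0)=E_{\btheta_0,\bzeta_0}\{\btau_i(\btheta_0,\bzeta_0)^{\otimes 2}\}$,
\[
\widehat\bV_N-\bv(\btheta_0,\bzeta_0)=\tfrac1N\textstyle\sum_i\{\btau_i(\widehat\btheta_{list},\widehat\bzeta_{list})^{\otimes 2}-\btau_i(\btheta_0,\bzeta_0)^{\otimes 2}\}+\{\tfrac1N\textstyle\sum_i\btau_i(\btheta_0,\bzeta_0)^{\otimes 2}-E_{\btheta_0,\bzeta_0}\btau_i(\btheta_0,\bzeta_0)^{\otimes 2}\}.
\]
The second term is $O_p(N^{-1/2})$ by the central limit theorem applied group-by-group to the i.i.d.\ matrices $\btau_i^{\otimes 2}$ (under the moment conditions on $\mathcal{P}$). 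For the first term, use $\ba^{\otimes 2}-\bb^{\otimes 2}=(\ba-\bb)\ba^T+\bb(\ba-\bb)^T$, so $\|\ba^{\otimes 2}-\bb^{\otimes 2}\|\le\|\ba-\bb\|(\|\ba\|+\|\bb\|)$; by the Lipschitz bounds \ref{psiN-conds} \ref{psiN-conds-i} on $\bpsi_{jk},\bg_{jk}$ and $(\widehat\btheta_{jk},\widehat\bzeta_{jk})-(\btheta_0,\bzeta_{jk0})=O_p(n_k^{-1/2})$ one gets $\tfrac1N\sum_i\|\btau_i(\widehat\btheta_{list},\widehat\bzeta_{list})-\btau_i(\btheta_0,\bzeta_0)\|^2=O_p(N^{-1})$, while $\tfrac1N\sum_i(\|\btau_i(\widehat\btheta_{list},\widehat\bzeta_{list})\|+\|\btau_i(\btheta_0,\bzeta_0)\|)^2=O_p(1)$ by the law of large numbers; a Cauchy--Schwarz step then gives the first term $=O_p(N^{-1/2})$.

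Finally, for the last display I would proceed in two steps. First, the algebraic identity $N^{-2}\sum_{k}\sum_{i}n_k^2\widehat\bC_{k,i}=\widehat\bS^T\widehat\bV^{-1}_N\widehat\bS$: because subjects in different groups are independent and $\bpsi_i$ is supported only on the coordinate slots of the group containing subject $i$, both $\widehat\bV_N$ and hence $\widehat\bV^{-1}_N$ are block-diagonal across the $K$ groups, so $\widehat\bS^T\widehat\bV^{-1}_N\widehat\bS$ decomposes as a sum over $k$; expanding the $k$-th term using the row-stacking structure of the $\btheta$-columns of $\widehat\bS$ and the block-diagonal structure of its $\bzeta$-columns, and matching terms against the definitions of $\widehat\bA^{\boeta}_{k,ij}$, $\widehat\bB^{\boeta}_{k,ij}$ and the zero-padded layout \eqref{def:C_ki} of $\widehat\bC_{k,i}$, yields the identity, the $n_k^2/N^2$ coming from the two factors $n_k/N$ carried by $\widehat\bS^T$ and $\widehat\bS$. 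Second, the rate: by Theorem \ref{thm:weight-matrix} and the first assertion of this lemma, and since $\bv(\btheta_0,\bzeta_0)$ is nonsingular by \ref{psiN-conds} and matrix inversion is locally Lipschitz, $\widehat\bV^{-1}_N=\bw+O_p(N^{-1/2})$; combining with $\widehat\bS=\bs(\btheta_0,\bzeta_0)+O_p(N^{-1/2})$ and taking products of $O_p(1)+O_p(N^{-1/2})$ matrices gives $\widehat\bS^T\widehat\bV^{-1}_N\widehat\bS=\bs^T(\btheta_0,\bzeta_0)\bw\,\bs(\btheta_0,\bzeta_0)+O_p(N^{-1/2})=\bj(\btheta_0,\bzeta_0)+O_p(N^{-1/2})$, the Godambe information of Theorem \ref{thm:norm}. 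I expect the main obstacle to be the bookkeeping in the algebraic identity --- keeping the row/column indexing of the subsetted blocks $[\widehat\bV^{\bpsi}_N]_{ij:k}$, $[\widehat\bV^{\bpsi\bg}_N]_{ij:k}$, $[\widehat\bV^{\bg}_N]_{ij:k}$ aligned with the zero-padding in \eqref{def:C_ki} --- while the only genuinely delicate analytic point is that the $O_p(N^{-1/2})$ rate for $\widehat\bV_N$ requires a finite fourth-moment, rather than merely second-moment, condition on the kernel inference functions.
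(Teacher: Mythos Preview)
Your proposal is correct and follows essentially the same approach the paper takes: a plug-in decomposition using the $n_k^{1/2}$-consistency of block estimators together with the Lipschitz condition \ref{psiN-conds}\ref{psiN-conds-i} and a CLT for the variability matrix, and a block-by-block expansion exploiting the group-wise block-diagonal structure of $\widehat{\bV}_N^{-1}$ (cf.\ the paper's Lemma \ref{thm:dist-inf:lemma-1}) to verify the algebraic identity $N^{-2}\sum_{k,i}n_k^2\widehat{\bC}_{k,i}=\widehat{\bS}^T\widehat{\bV}_N^{-1}\widehat{\bS}$. One small caveat: your claim that $\bs_{jk}$ is locally Lipschitz is slightly stronger than what \ref{psiN-conds}(ii) literally gives (continuity on a compact neighbourhood), so for the second bracket in the sensitivity step you either need to invoke an implicit smoothness assumption or, more safely, absorb that term into the stated $n_k^{1/2}$-consistency of $\bS_{jk}(\cdot,\cdot)$ as an estimator of $\bs_{jk}(\cdot,\cdot)$.
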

\begin{proof}
A detailed proof is given in the Supplemental Material.
\end{proof}
We show in Theorem \ref{thm:dist-equiv} that the proposed closed-form estimator $(\widehat{\btheta}_{DDIMM}, \widehat{\bzeta}_{DDIMM})$ in \eqref{one-step-estimator-both} is consistent and asymptotically normally distributed.
\begin{theorem}
\label{thm:dist-equiv}
Suppose assumptions \ref{consistent}, \ref{psiN-conds} and \ref{cond-equiv} hold. Let $\bj(\btheta, \bzeta)$ as given in Theorem \ref{thm:norm}. As $n_{\min} \rightarrow \infty$,
\begin{align*}
N^{1/2} \left( \begin{array}{c} \widehat{\btheta}_{DDIMM}-\btheta_0 \\ \widehat{\bzeta}_{DDIMM}-\bzeta_0 \end{array} \right) \stackrel{d}{\rightarrow} \mathcal{N} \left(\boldsymbol{0}, \bj^{-1}(\btheta_0, \bzeta_0) \right).
\end{align*}
\end{theorem}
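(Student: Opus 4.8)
The plan is to derive an explicit asymptotic linear (influence-function) representation for the closed-form estimator in \eqref{one-step-estimator-both} and to identify it with the representation underlying the optimal GMM estimator in \eqref{def:combined-estimator-nuisance-opt}; in fact \eqref{one-step-estimator-both} is precisely the one-step Newton update for the objective $\bTau_N^T\widehat{\bV}_N^{-1}\bTau_N$ started from the block estimates $(\widehat{\btheta}_{list}, \widehat{\bzeta}_{list})$. Once the representation $N^{1/2}((\widehat{\btheta}_{DDIMM}-\btheta_0)^T, (\widehat{\bzeta}_{DDIMM}-\bzeta_0)^T)^T = (\widehat{\bS}^T\widehat{\bV}_N^{-1}\widehat{\bS})^{-1}\widehat{\bS}^T\widehat{\bV}_N^{-1}\, N^{1/2}\bTau_N(\btheta_0, \bzeta_0) + o_p(1)$ is established, the limiting law follows from the central limit theorem for $N^{1/2}\bTau_N(\btheta_0, \bzeta_0)$ and Slutsky's theorem, the sandwich covariance collapsing to $\bj^{-1}(\btheta_0, \bzeta_0)$ because $\widehat{\bV}_N^{-1}$ is the optimal weight. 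Throughout I use that, with $K$ and $J$ fixed and $n_{\min}\to\infty$, the $n_k$ are of the same order as $N$, so that an $o_p(n_k^{-1/2})$ or $O_p(n_k^{-1})$ quantity, once multiplied by $n_k^2/N^2$ and summed over the $JK$ blocks as in \eqref{one-step-estimator-both}, is $o_p(N^{-1/2})$.

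First I would linearize the block estimators. Writing $\bTau_{jk} = \veco(\bPsi_{jk}, \bG_{jk})$, the block equations \eqref{block-EE-1}--\eqref{block-EE-2} give $\bTau_{jk}(\widehat{\btheta}_{jk}; \widehat{\bzeta}_{jk}) = \boldsymbol{0}$. Decomposing $\bTau_{jk}(\widehat{\btheta}_{jk}; \widehat{\bzeta}_{jk}) - \bTau_{jk}(\btheta_0; \bzeta_{jk0})$ into its mean part and a stochastic-equicontinuity remainder, bounding the remainder by the Lipschitz property in \ref{psiN-conds} and the root-$n_k$ rates in \ref{cond-equiv} (the $(j,k)$-component of $\bTau_N$ is $(n_k/N)$ times $\bTau_{jk}$, so the equicontinuity bound in \ref{cond-equiv} transfers to each block), and expanding the mean part via unbiasedness \ref{consistent} and continuity of the sensitivity \ref{psiN-conds}, one obtains
\begin{align*}
\left( \begin{array}{c} \widehat{\btheta}_{jk} - \btheta_0 \\ \widehat{\bzeta}_{jk} - \bzeta_{jk0} \end{array} \right) = \widehat{\bS}_{jk}^{-1}\, \bTau_{jk}(\btheta_0; \bzeta_{jk0}) + o_p(n_k^{-1/2}),
\end{align*}
where Lemma \ref{lemma:add-results} is used to replace $\bs_{jk}(\btheta_0, \bzeta_{jk0})$ by the sample sensitivity $\widehat{\bS}_{jk} = \bS_{jk}(\widehat{\btheta}_{jk}, \widehat{\bzeta}_{jk})$. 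Since $\bPsi_N(\btheta_0; \bzeta_0) = \frac{1}{N}\veco^{JK}(n_k \bPsi_{jk}(\btheta_0; \bzeta_{jk0}))$ and likewise for $\bG_N$, the $(j,k)$-block of $\bTau_N(\btheta_0, \bzeta_0)$ equals $(n_k/N)\,\bTau_{jk}(\btheta_0; \bzeta_{jk0})$.

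Next I would substitute this into \eqref{one-step-estimator-both}. Centring and using $(\sum_{k,i} n_k^2 \widehat{\bC}_{k,i})^{-1}\sum_{k,i} n_k^2 \widehat{\bC}_{k,i}(\btheta_0^T, \bzeta_0^T)^T = (\btheta_0^T, \bzeta_0^T)^T$ reduces the problem to computing $N^{-2}\sum_{k,i} n_k^2 \widehat{\bC}_{k,i}((\widehat{\btheta}_{ik}-\btheta_0)^T, (\widehat{\bzeta}_{list}-\bzeta_0)^T)^T$ and then premultiplying by $(N^{-2}\sum_{k,i} n_k^2 \widehat{\bC}_{k,i})^{-1}$. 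By \eqref{def:C_ki}, $\widehat{\bC}_{k,i}$ has nonzero columns only for $\btheta$ and for $\bzeta_{ik}$, so only $\widehat{\btheta}_{ik}-\btheta_0$ and $\widehat{\bzeta}_{ik}-\bzeta_{ik0}$ survive the product; substituting the block-$(i,k)$ expansion above turns $\widehat{\bS}^{\btheta}_{\bpsi_{ik}}(\widehat{\btheta}_{ik}-\btheta_0) + \widehat{\bS}^{\bzeta}_{\bpsi_{ik}}(\widehat{\bzeta}_{ik}-\bzeta_{ik0})$ and $\widehat{\bS}^{\btheta}_{\bg_{ik}}(\widehat{\btheta}_{ik}-\btheta_0) + \widehat{\bS}^{\bzeta}_{\bg_{ik}}(\widehat{\bzeta}_{ik}-\bzeta_{ik0})$ into $\bPsi_{ik}(\btheta_0; \bzeta_{ik0})$ and $\bG_{ik}(\bzeta_{ik0}; \btheta_0)$ up to $o_p(n_k^{-1/2})$. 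The factors that then remain — $\widehat{\bS}^{\btheta T}_{\bpsi_{jk}}$ and $\widehat{\bS}^{\btheta T}_{\bg_{jk}}$ (with their $\bzeta$-row analogues) multiplied by $\left[\widehat{\bV}^{\bpsi}_N\right]_{ji:k}$, $\left[\widehat{\bV}^{\bpsi\bg}_N\right]_{ji:k}$, $\left[\widehat{\bV}^{\bg}_N\right]_{ji:k}$, i.e.\ the constituents of $\widehat{\bA}^{\boeta}_{k,ij}$ and $\widehat{\bB}^{\boeta}_{k,ij}$ in Section \ref{subsec:implementation:def-C_ki} — are exactly the $(j,k)$--$(i,k)$ sub-blocks of $\widehat{\bS}^T\widehat{\bV}_N^{-1}$; using also that $\widehat{\bV}_N$, hence $\widehat{\bV}_N^{-1}$, is block-diagonal across subject groups (each $\btau_i$ is supported on a single group), the sums over $j$ and over blocks reassemble into $N^{-2}\sum_{k,i} n_k^2 \widehat{\bC}_{k,i}((\widehat{\btheta}_{ik}-\btheta_0)^T, (\widehat{\bzeta}_{list}-\bzeta_0)^T)^T = \widehat{\bS}^T\widehat{\bV}_N^{-1}\bTau_N(\btheta_0, \bzeta_0) + o_p(N^{-1/2})$. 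Since $N^{-2}\sum_{k,i} n_k^2 \widehat{\bC}_{k,i} = \widehat{\bS}^T\widehat{\bV}_N^{-1}\widehat{\bS} = \bj(\btheta_0, \bzeta_0) + O_p(N^{-1/2})$ by Lemma \ref{lemma:add-results}, this yields the representation announced in the plan, and in particular $(\widehat{\btheta}_{DDIMM}, \widehat{\bzeta}_{DDIMM}) \stackrel{p}{\rightarrow} (\btheta_0, \bzeta_0)$. Finally, the independent mean-$\boldsymbol{0}$ summands of $N^{1/2}\bTau_N(\btheta_0, \bzeta_0) = N^{-1/2}\sum_{i=1}^N \btau_i(\btheta_0, \bzeta_0)$ (unbiasedness, \ref{consistent}) obey $N^{-1}\sum_i Var(\btau_i) \to \bv(\btheta_0, \bzeta_0)$, finite and positive definite by \ref{psiN-conds} and Theorem \ref{thm:weight-matrix}, so the central limit theorem used in Theorems \ref{thm:norm} and \ref{thm:weight-matrix} gives $N^{1/2}\bTau_N(\btheta_0, \bzeta_0) \stackrel{d}{\rightarrow} \mathcal{N}(\boldsymbol{0}, \bv(\btheta_0, \bzeta_0))$; combined with $\widehat{\bS} \stackrel{p}{\rightarrow} \bs(\btheta_0, \bzeta_0)$, $\widehat{\bS}^T\widehat{\bV}_N^{-1}\widehat{\bS} \stackrel{p}{\rightarrow} \bj(\btheta_0, \bzeta_0)$ (Lemma \ref{lemma:add-results}) and $\widehat{\bV}_N \stackrel{p}{\rightarrow} \bv(\btheta_0, \bzeta_0)$ (Theorem \ref{thm:weight-matrix}), Slutsky's theorem gives
\begin{align*}
N^{1/2}\left( \begin{array}{c} \widehat{\btheta}_{DDIMM} - \btheta_0 \\ \widehat{\bzeta}_{DDIMM} - \bzeta_0 \end{array} \right) \stackrel{d}{\rightarrow} \mathcal{N}\!\left( \boldsymbol{0},\; \bj^{-1}\bs^T\bv^{-1}\bv\,\bv^{-1}\bs\,\bj^{-1} \right) = \mathcal{N}\!\left( \boldsymbol{0},\, \bj^{-1}(\btheta_0, \bzeta_0) \right),
\end{align*}
because $\bs^T\bv^{-1}\bv\,\bv^{-1}\bs = \bs^T\bv^{-1}\bs = \bj(\btheta_0, \bzeta_0)$, the last identity being the form of the Godambe information furnished by Lemma \ref{lemma:add-results}.

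The step I expect to be the main obstacle is the matrix algebra in the preceding paragraph: verifying that the explicitly constructed blocks $\widehat{\bA}^{\boeta}_{k,ij}$, $\widehat{\bB}^{\boeta}_{k,ij}$ and $\widehat{\bC}_{k,i}$ of Section \ref{subsec:implementation:def-C_ki}, after the block linearizations are substituted, reassemble exactly into $\widehat{\bS}^T\widehat{\bV}_N^{-1}$ acting on $\bTau_N(\btheta_0, \bzeta_0)$. This is a careful but mechanical bookkeeping of the positions of the $\bzeta_{ik}$-blocks inside $\widehat{\bC}_{k,i}$, of the $\bPsi$/$\bG$ ordering of $\bTau_N$, and of the group-block-diagonality of $\widehat{\bV}_N$; it is the only place where the detailed construction of Section \ref{subsec:implementation:def-C_ki} enters, everything else being $o_p$/$O_p$ bookkeeping together with Lemma \ref{lemma:add-results} and the central limit theorem.
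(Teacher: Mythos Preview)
Your proposal is correct and follows essentially the same route as the paper. Both arguments rest on the block linearization
\[
\bTau_{jk}(\btheta_0,\bzeta_{jk0})=\widehat{\bS}_{jk}\!\left(\begin{array}{c}\widehat{\btheta}_{jk}-\btheta_0\\ \widehat{\bzeta}_{jk}-\bzeta_{jk0}\end{array}\right)+O_p(n_k^{-1})
\]
(your inverse form of the paper's Lemma~\ref{appendix:lemma:Taylor}), the identity $N^{-2}\sum_{k,i} n_k^2\widehat{\bC}_{k,i}=\widehat{\bS}^T\widehat{\bV}_N^{-1}\widehat{\bS}$ from Lemma~\ref{lemma:add-results}, the CLT for $N^{1/2}\bTau_N(\btheta_0,\bzeta_0)$, and Slutsky. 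The only organizational difference is that the paper first derives the joint asymptotic normality of the stacked block estimates $(\widehat{\btheta}_{list},\widehat{\bzeta}_{list})$ and then passes through the linear map defining $(\widehat{\btheta}_{DDIMM},\widehat{\bzeta}_{DDIMM})$, whereas you substitute the block linearizations directly into \eqref{one-step-estimator-both} and reassemble the $\widehat{\bA}^{\boeta}_{k,ij},\widehat{\bB}^{\boeta}_{k,ij}$ blocks into $\widehat{\bS}^T\widehat{\bV}_N^{-1}\bTau_N(\btheta_0,\bzeta_0)$; your path is arguably a bit more transparent about where the sandwich collapses to $\bj^{-1}$, but the ingredients and the level of difficulty are identical, and the ``main obstacle'' you flag is exactly the bookkeeping the paper absorbs into Lemma~\ref{lemma:add-results} and the display following \eqref{equiv-5-simp}.
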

\begin{proof}[Proof of Theorem \ref{thm:dist-equiv}:] Here we present major steps, with all necessary details available in Appendix \ref{subsec:appendix:proofs:dist-equiv}. First, we show that $\widehat{\btheta}_{DDIMM}$ and $\widehat{\bzeta}_{DDIMM}$ are consistent. Define
\begin{align}
\lambda(\btheta, \bzeta)
&=\frac{1}{N^2} \sum \limits_{k=1}^K \sum \limits_{i=1}^J n^2_k \widehat{\bC}_{k,i} \left( \begin{array}{c} \btheta - \widehat{\btheta}_{ik} \\ \bzeta - \widehat{\bzeta}_{list} \end{array} \right).
\label{def:lambda}
\end{align}
By definition, $\lambda(\widehat{\btheta}_{DDIMM}, \widehat{\bzeta}_{DDIMM})=\boldsymbol{0}$. As shown in Lemma \ref{appendix:lemma:lambda} in Appendix \ref{subsec:appendix:proofs:dist-equiv}, $\lambda(\btheta_0, \bzeta_0) \allowbreak \stackrel{p}{\rightarrow} \boldsymbol{0}$ as $n_{\min} \rightarrow \infty$. Given that $\nabla_{\btheta, \bzeta} \lambda(\btheta, \bzeta)$ exists and is nonsingular, for some $(\btheta^*, \bzeta^*)$ between $(\widehat{\btheta}_{DDIMM}, \widehat{\bzeta}_{DDIMM})$ and $(\btheta_0, \bzeta_0)$, the first-order Taylor expansion leads to
\begin{align}
\lambda(\widehat{\btheta}_{DDIMM}, \widehat{\bzeta}_{DDIMM}) - \lambda(\btheta_0, \bzeta_0)
&=\nabla_{\btheta, \bzeta} \lambda(\btheta, \bzeta) \rvert_{\btheta^*,\bzeta^*} \left( \begin{array}{c} \widehat{\btheta}_{DDIMM}-\btheta_0 \\ \widehat{\bzeta}_{DDIMM} - \bzeta_0 \end{array} \right), 
\label{lambda-Taylor}
\end{align}
which converges in probability to $\boldsymbol{0}$ as $n_{\min} \rightarrow \infty$. This implies that $(\widehat{\btheta}_{DDIMM}, \allowbreak \widehat{\bzeta}_{DDIMM}) \stackrel{p}{\rightarrow} (\btheta_0, \bzeta_0)$ as $n_{\min} \rightarrow \infty$.\\
Now we derive the distribution of $(\widehat{\btheta}_{DDIMM}, \widehat{\bzeta}_{DDIMM})$. With a slight abuse of notation, let $\widehat{\btheta}_{list}-\btheta_0=\veco^{JK} \left( \widehat{\btheta}_{jk} -\btheta_0\right)$. We show in Lemma \ref{appendix:lemma:Taylor} in Appendix \ref{subsec:appendix:proofs:dist-equiv} that 
\begin{align}
\left( \begin{array}{c}
\bPsi_{jk}(\btheta_0; \bzeta_{jk0})\\
\bG_{jk}(\bzeta_{jk0}; \btheta_0)
\end{array} \right) &=\widehat{\bS}_{jk}\left( \begin{array}{c} \widehat{\btheta}_{jk}-\btheta_0 \\ \widehat{\bzeta}_{jk}-\bzeta_{jk0} \end{array} \right) + O_p(n^{-1}_k). \label{equiv-5-simp}
\end{align}
Recall the form of $\bTau_N$ in \eqref{TauN}. By the Central Limit Theorem, $N^{1/2} \bTau_N(\btheta_0, \bzeta_0) \stackrel{d}{\rightarrow} \mathcal{N} \left(0, \bv(\btheta_0, \bzeta_0) \right)$. Then with $\widehat{\bS}$ defined in Lemma \ref{lemma:add-results}, it follows from equation \eqref{equiv-5-simp} that
\begin{align*}
N^{1/2} \widehat{\bS} \left( \begin{array}{cc}
(\widehat{\btheta}_{list}-\btheta_0)^T &
(\widehat{\bzeta}_{list}-\bzeta_0)^T \end{array} \right)^T
\stackrel{d}{\rightarrow} \mathcal{N} \left(0, \bv(\btheta_0, \bzeta_0) \right).
\end{align*}
Moreover, by Lemma \ref{lemma:add-results} and Slutsky's theorem we have:
\begin{align*}
N^{1/2} \left( \begin{array}{cc}
(\widehat{\btheta}_{list}-\btheta_0)^T &
(\widehat{\bzeta}_{list}-\bzeta_0)^T \end{array} \right)^T
&\stackrel{d}{\rightarrow} \mathcal{N} \left(0,  \bj^{-1}(\btheta_0, \bzeta_0) \right).
\end{align*}
Using the fact that the sum of jointly (asymptotically) Normal variables is (asymptotically) normal, by Lemma \ref{lemma:add-results} and Slutsky's theorem again, we have\\
\scalebox{0.9}{\parbox{\linewidth}{%
\begin{align*}
&N^{1/2}\left( \begin{array}{c}
\widehat{\btheta}_{DDIMM}-\btheta_0\\
\widehat{\bzeta}_{DDIMM}- \bzeta_0 
\end{array} \right)=N^{1/2} \left( \sum \limits_{k=1}^K \sum \limits_{i=1}^J n^2_k \widehat{\bC}_{k,i} \right)^{-1} \sum \limits_{k=1}^K \sum \limits_{i=1}^J n^2_k \widehat{\bC}_{k,i} \left( \begin{array}{c} \widehat{\btheta}_{ik}-\btheta_0 \\ \widehat{\bzeta}_{list} - \bzeta_0 \end{array} \right) 
\end{align*}
}}\\
is asymptotically distributed $\mathcal{N}(\boldsymbol{0}, \bj^{-1}(\btheta_0, \bzeta_0))$.
\end{proof}
\noindent This key theorem allows us to use $\widehat{\btheta}_{DDIMM}$, which is more computationally attractive than $\widehat{\btheta}_{opt}$ defined in \eqref{def:combined-estimator-nuisance-opt}, without sacrificing any of the nice asymptotic properties for inference. Additionally, it follows easily from Theorem \ref{thm:dist-equiv} that, under suitable conditions, the closed-form estimator $(\widehat{\btheta}_{DDIMM}, \widehat{\bzeta}_{DDIMM})$ in \eqref{one-step-estimator-both} has the same asymptotic distribution as and is asymptotically equivalent to the GMM estimator $\widehat{\btheta}_{opt}$ in \eqref{def:combined-estimator-nuisance-opt}.
\begin{corollary}\label{cor:equiv}
Suppose assumptions \ref{consistent}-\ref{cond-equiv} hold with $(\widehat{\btheta}_{opt}, \widehat{\bzeta}_{opt})$ defined in \eqref{def:combined-estimator-nuisance-opt}. Then $(\widehat{\btheta}_{DDIMM}, \widehat{\bzeta}_{DDIMM})$ and $(\widehat{\btheta}_{opt}, \widehat{\bzeta}_{opt})$ are asymptotically equivalent: as $n_{\min} \rightarrow \infty$,
\begin{align*}
N^{1/2} \left\| \left( \begin{array}{c}
\widehat{\btheta}_{DDIMM}-\widehat{\btheta}_{opt}\\
\widehat{\bzeta}_{DDIMM}-\widehat{\bzeta}_{opt}
\end{array} \right)
 \right\| \stackrel{p}{\rightarrow}.
\end{align*}
\end{corollary}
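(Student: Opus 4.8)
The plan is to show that both $(\widehat{\btheta}_{DDIMM},\widehat{\bzeta}_{DDIMM})$ and $(\widehat{\btheta}_{opt},\widehat{\bzeta}_{opt})$ admit the same first-order asymptotic linear (Bahadur-type) expansion, with leading term a fixed linear functional of $N^{1/2}\bTau_N(\btheta_0,\bzeta_0)$. The difference of the two estimators is then $o_p(N^{-1/2})$, and since $N^{1/2}\|\cdot\|=\|N^{1/2}\,\cdot\,\|$ the claim follows from the continuous mapping theorem applied to $\|\cdot\|$.

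First I would record the influence-function expansion of the optimal GMM estimator from \eqref{def:combined-estimator-nuisance-opt}. By Theorem \ref{thm:consist} it is consistent; by Theorem \ref{thm:weight-matrix} the weight matrix satisfies $\widehat{\bV}_N\stackrel{p}{\rightarrow}\bv(\btheta_0,\bzeta_0)$, so $\bw=\bv^{-1}(\btheta_0,\bzeta_0)$ and $\bj(\btheta_0,\bzeta_0)=\bs(\btheta_0,\bzeta_0)\bv^{-1}(\btheta_0,\bzeta_0)\bs^T(\btheta_0,\bzeta_0)$; and condition \ref{cond-norm} supplies the stochastic equicontinuity that substitutes for differentiability of $\bTau_N$. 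Running the argument of Theorem 7.2 of \cite{Newey-McFadden} exactly as in the proof of Theorem \ref{thm:norm} then yields
\[
N^{1/2}\left(\begin{array}{c}\widehat{\btheta}_{opt}-\btheta_0\\ \widehat{\bzeta}_{opt}-\bzeta_0\end{array}\right)=\bj^{-1}(\btheta_0,\bzeta_0)\,\bs(\btheta_0,\bzeta_0)\,\bv^{-1}(\btheta_0,\bzeta_0)\,N^{1/2}\bTau_N(\btheta_0,\bzeta_0)+o_p(1).
\]

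Next I would extract the matching expansion for the closed-form estimator from the proof of Theorem \ref{thm:dist-equiv}. Working from \eqref{one-step-estimator-both}, Lemma \ref{lemma:add-results} gives $N^{-2}\sum_{k=1}^K\sum_{i=1}^J n_k^2\widehat{\bC}_{k,i}=\widehat{\bS}^T\widehat{\bV}_N^{-1}\widehat{\bS}=\bj(\btheta_0,\bzeta_0)+O_p(N^{-1/2})$, while the block-level expansion \eqref{equiv-5-simp}, aggregated over $(j,k)$ with the $n_k/N$ weights defining $\bTau_N$ and $\widehat{\bS}$, shows $\widehat{\bS}\,\veco^{JK}\big((\widehat{\btheta}_{jk}-\btheta_0)^T,(\widehat{\bzeta}_{jk}-\bzeta_{jk0})^T\big)^T=\bTau_N(\btheta_0,\bzeta_0)+o_p(N^{-1/2})$, the remainder being $o_p(N^{-1/2})$ since $K$ and $J$ are fixed and hence $n_k\asymp N$. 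Substituting into \eqref{one-step-estimator-both}, using $\widehat{\bS}\stackrel{p}{\rightarrow}\bs(\btheta_0,\bzeta_0)$ and $\widehat{\bV}_N\stackrel{p}{\rightarrow}\bv(\btheta_0,\bzeta_0)$ from Lemma \ref{lemma:add-results}, and applying Slutsky's theorem, I obtain
\begin{align*}
N^{1/2}\left(\begin{array}{c}\widehat{\btheta}_{DDIMM}-\btheta_0\\ \widehat{\bzeta}_{DDIMM}-\bzeta_0\end{array}\right)
&=\big(\widehat{\bS}^T\widehat{\bV}_N^{-1}\widehat{\bS}\big)^{-1}\widehat{\bS}^T\widehat{\bV}_N^{-1}\,N^{1/2}\bTau_N(\btheta_0,\bzeta_0)+o_p(1)\\
&=\bj^{-1}(\btheta_0,\bzeta_0)\,\bs(\btheta_0,\bzeta_0)\,\bv^{-1}(\btheta_0,\bzeta_0)\,N^{1/2}\bTau_N(\btheta_0,\bzeta_0)+o_p(1),
\end{align*}
the same leading term as for $(\widehat{\btheta}_{opt},\widehat{\bzeta}_{opt})$.

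Subtracting the two displays, the leading terms cancel, so $N^{1/2}\big((\widehat{\btheta}_{DDIMM}-\widehat{\btheta}_{opt})^T,(\widehat{\bzeta}_{DDIMM}-\widehat{\bzeta}_{opt})^T\big)^T\stackrel{p}{\rightarrow}\boldsymbol{0}$, and continuity of $\|\cdot\|$ delivers the stated conclusion. I expect the only delicate point to be the first step: obtaining the influence-function expansion for $\widehat{\btheta}_{opt}$ without assuming $\bTau_N$ differentiable, which must be handled by invoking the equicontinuity condition \ref{cond-norm} carefully inside the Newey--McFadden argument (as already done for Theorem \ref{thm:norm}); everything after that is bookkeeping contained in Lemma \ref{lemma:add-results} and the proof of Theorem \ref{thm:dist-equiv}.
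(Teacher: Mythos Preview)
Your approach is correct and is the natural route: extract a common first-order (Bahadur) expansion for both estimators with leading term $\bj^{-1}(\btheta_0,\bzeta_0)\,\bs(\btheta_0,\bzeta_0)\,\bv^{-1}(\btheta_0,\bzeta_0)\,N^{1/2}\bTau_N(\btheta_0,\bzeta_0)$, then subtract. The paper defers its proof to the Supplemental Material, but the ingredients you invoke---the Newey--McFadden linearization underlying Theorem~\ref{thm:norm} for $(\widehat{\btheta}_{opt},\widehat{\bzeta}_{opt})$, and Lemma~\ref{lemma:add-results} together with \eqref{equiv-5-simp} from the proof of Theorem~\ref{thm:dist-equiv} for $(\widehat{\btheta}_{DDIMM},\widehat{\bzeta}_{DDIMM})$---are exactly the pieces the paper develops for this purpose, so your argument is essentially what the paper intends.

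One small caution on the bookkeeping you flag as routine: the passage from \eqref{one-step-estimator-both} to the display $(\widehat{\bS}^T\widehat{\bV}_N^{-1}\widehat{\bS})^{-1}\widehat{\bS}^T\widehat{\bV}_N^{-1}\,N^{1/2}\bTau_N(\btheta_0,\bzeta_0)+o_p(1)$ is not literally a substitution of Lemma~\ref{lemma:add-results}, since that lemma only identifies $N^{-2}\sum_{k,i}n_k^2\widehat{\bC}_{k,i}$, not the action of the individual $\widehat{\bC}_{k,i}$ on $(\widehat{\btheta}_{ik}-\btheta_0,\widehat{\bzeta}_{list}-\bzeta_0)$. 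You need the block structure of $\widehat{\bC}_{k,i}$ from Section~\ref{subsec:implementation:def-C_ki} (or equivalently $\widehat{\bC}^*_{k,i}$ from Appendix~\ref{sec:appendix:technical:Cki}) to see that this weighted sum equals $\widehat{\bS}^T\widehat{\bV}_N^{-1}$ applied to the stacked block linearizations, after which \eqref{equiv-5-simp} converts it to $\widehat{\bS}^T\widehat{\bV}_N^{-1}\bTau_N(\btheta_0,\bzeta_0)$ up to an $O_p(N^{-1})$ remainder (here $K,J$ fixed and $n_k\asymp N$). Also note that the vector you write as $\widehat{\bS}\,\veco^{JK}\big((\widehat{\btheta}_{jk}-\btheta_0)^T,(\widehat{\bzeta}_{jk}-\bzeta_{jk0})^T\big)^T$ has a dimension mismatch with the $\widehat{\bS}$ of Lemma~\ref{lemma:add-results}; the paper's own display in the proof of Theorem~\ref{thm:dist-equiv} has the same loose notation, so just be explicit about the block reordering when you write it out.
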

\begin{proof} A detailed proof is given in the Supplemental Material.
\end{proof}
\noindent The computation of $\widehat{\btheta}_{DDIMM}$ in \eqref{one-step-estimator} relies solely on block-specific estimators $(\widehat{\btheta}_{list}, \widehat{\bzeta}_{list})$ and values of summary statistics from each block. To guarantee the appropriate asymptotic distribution of $\widehat{\btheta}_{DDIMM}$, we assume in condition \ref{cond-equiv} that these block-specific estimators are $N^{1/2}$ consistent estimators of the true values, which restricts the scope of possible block-specific inference methods. For inference methods not satisfying this $N^{1/2}$ consistency in condition \ref{cond-equiv}, it is still possible to use $\widehat{\btheta}_{opt}$ in \eqref{def:combined-estimator-nuisance-opt}. 

\subsection{Asymptotic results for diverging $K$ with $J$ fixed}
\label{subsec:implementation:asymptotics-Kdiv}

We show in Theorem \ref{thm:dist-inf} that the asymptotic distribution of $(\widehat{\btheta}_{DDIMM}, \widehat{\bzeta}_{DDIMM})$ remains unchanged as the number of subject groups $K$ grows with the sample size.
\begin{theorem}
\label{thm:dist-inf}
Suppose $N^{\delta-1/2}K$ is bounded as $n_{\min} \rightarrow \infty$ for a positive constant $\delta < \frac{1}{2}$,  and assumptions \ref{consistent}, \ref{psiN-conds} and \ref{cond-equiv} hold. Let $\bH \in \mathbb{R}^{h \times (p+d)}$ a matrix of rank $r \in \mathbb{N}$, $h \in \mathbb{N}$, $r \leq h$, with finite maximum singular value $\bar{\sigma}(\bH) < \infty$. Let $\bj(\btheta, \bzeta)$ as given in Theorem \ref{thm:norm}. Then, as $n_{\min} \rightarrow \infty$, we show that the limiting value $\bj_{\bH}(\btheta_0, \bzeta_0)$ of $\bH \bj^{-1}(\btheta_0, \bzeta_0) \bH^T$ is a positive semi-definite and symmetric variance matrix, and that
\begin{align*}
N^{1/2} \bH \left( \begin{array}{c}
\widehat{\btheta}_{DDIMM}-\btheta_0\\
\widehat{\bzeta}_{DDIMM}-\bzeta_0
\end{array} \right) 
&\stackrel{d}{\rightarrow} \mathcal{N} \left(\boldsymbol{0}, \bj_{\bH}(\btheta_0, \bzeta_0)  \right).
\end{align*}
\end{theorem}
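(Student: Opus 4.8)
The plan is to reduce the claim to Theorem \ref{thm:dist-equiv} by controlling the error introduced when $K$ is allowed to diverge with $n_{\min}$. First I would revisit the proof of Theorem \ref{thm:dist-equiv}, which expresses $N^{1/2}(\widehat{\btheta}_{DDIMM}-\btheta_0, \widehat{\bzeta}_{DDIMM}-\bzeta_0)^T$ as $(\widehat{\bS}^T \widehat{\bV}^{-1}_N \widehat{\bS})^{-1}$ times a sum over blocks that is asymptotically $\mathcal{N}(0, \bv(\btheta_0,\bzeta_0))$, together with the remainder terms that Lemma \ref{lemma:add-results} controls at rate $O_p(N^{-1/2})$ and the $O_p(n_k^{-1})$ terms from \eqref{equiv-5-simp}. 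When $K$ diverges, the dimension of $\bTau_N$ grows like $JKp+d$, so I would track how the accumulated remainder scales in $K$: the key quantitative input is that each of the $O(K)$ blocks contributes a remainder of order $n_k^{-1}$, and since $n_{\min}$ and $N$ are related through $N = \sum_k n_k \geq K n_{\min}$, the total error is of order $K/n_{\min}^? $ which the hypothesis $N^{\delta-1/2}K$ bounded is designed to kill. I would make this precise by re-deriving the analogues of Lemmas \ref{appendix:lemma:lambda}, \ref{appendix:lemma:Taylor} with explicit $K$-dependence in the constants, showing the cumulative higher-order term is $o_p(1)$ under $N^{\delta-1/2}K = O(1)$, $\delta < 1/2$.

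Next I would address the fact that the limiting covariance $\bj^{-1}(\btheta_0,\bzeta_0)$ is itself a growing matrix, so a naive CLT statement is meaningless; this is why the theorem is phrased through the fixed-dimensional contrast $\bH$ of rank $r$ with bounded maximal singular value $\bar\sigma(\bH)$. I would first establish that $\bH \bj^{-1}(\btheta_0,\bzeta_0)\bH^T$ converges to a fixed positive semi-definite symmetric matrix $\bj_{\bH}(\btheta_0,\bzeta_0)$: writing $\bj(\btheta_0,\bzeta_0) = \bs \bw \bs^T$ with $\bw = \bv^{-1}$ and using the block-diagonal-plus-stacked structure of $\bs$ in \eqref{tau-sensitivity}, the $\btheta$-block of $\bj^{-1}$ should aggregate into a sum of per-block Godambe informations weighted by $(n_k/N)^2$, and a law-of-large-numbers/Cesàro argument over the diverging index $k$ gives convergence of $\bH \bj^{-1}\bH^T$ to its limit, provided the per-block informations are uniformly bounded and bounded below (which follows from assumptions \ref{psiN-conds}(ii)-(iii) holding uniformly, an implicit requirement I would state). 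The bounded singular value of $\bH$ ensures this contrast does not blow up.

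Then I would apply a CLT for triangular arrays to $N^{1/2}\bH \widehat{\bS}^T \widehat{\bV}_N^{-1}\bTau_N(\btheta_0,\bzeta_0)$, i.e. the leading linear term: after substituting $\bTau_N = N^{-1}\sum_{i=1}^N \btau_i$, this is $N^{-1/2}\sum_{i=1}^N \bH \widehat{\bS}^T\widehat{\bV}_N^{-1}\btau_i(\btheta_0,\bzeta_0)$, a sum of independent (across subjects) mean-zero vectors whose dimension $h$ is fixed because $\bH$ projects down. I would verify a Lindeberg (or Lyapunov) condition for this array: the summand variance is $\bH \widehat{\bS}^T \widehat{\bV}_N^{-1}\bv \widehat{\bV}_N^{-1}\widehat{\bS}\bH^T$, which tends to $\bj_{\bH}(\btheta_0,\bzeta_0)$ by the previous step and Lemma \ref{lemma:add-results}, and the Lyapunov ratio is controlled by the bounded operator norm of $\bH$ together with moment bounds on $\btau_i$ from \ref{psiN-conds}. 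Slutsky's theorem then combines the leading term with the $o_p(1)$ remainders from the first paragraph to give the stated convergence.

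The main obstacle I anticipate is the bookkeeping in the first paragraph: showing that the sum of $O(K)$ block-level higher-order remainders, each nominally $O_p(n_k^{-1})$, is genuinely $o_p(N^{-1/2})$ after being hit by $N^{1/2}\bH(\sum n_k^2 \widehat{\bC}_{k,i})^{-1}\sum n_k^2\widehat{\bC}_{k,i}$ — in particular making sure the inverse of the growing matrix $\sum_k\sum_i n_k^2\widehat{\bC}_{k,i}$ does not amplify the error, which requires a uniform lower bound on its smallest relevant eigenvalue along the $\bH$-subspace. This is precisely where the rate condition $N^{\delta-1/2}K$ bounded with $\delta<1/2$ enters, and getting the exponents to line up (balancing $K$, $n_{\min}$, and $N$) will be the delicate part; the rest is a fairly standard triangular-array CLT plus Slutsky argument.
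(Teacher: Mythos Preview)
Your proposal is broadly correct and identifies the right pressure points (accumulated $K$-dependent remainders, the need to project via $\bH$ to a fixed dimension before invoking a CLT, and a lower bound on the relevant eigenvalues of the information matrix). However, the route differs from the paper's in one structural respect that is worth noting.

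The paper does \emph{not} run a subject-level Lindeberg--Feller CLT on $N^{-1/2}\sum_{i=1}^N \bH\widehat{\bS}^T\widehat{\bV}_N^{-1}\btau_i(\btheta_0,\bzeta_0)$. Instead it exploits the independence of subject groups to block-diagonalize $\bv^{-1}(\btheta_0,\bzeta_0)$ and introduces explicit 0/1 embedding matrices $\bE_k$ so that $\bj(\btheta_0,\bzeta_0)=\sum_{k=1}^K (n_k/N)\,\bE_k\,\bj_k(\btheta_0,\bzeta_{k0})\,\bE_k^T$ and the linearized DDIMM estimator becomes $\sum_{k=1}^K (n_k/N)^{1/2}\bE_k\,n_k^{1/2}\bZ_k$ with $n_k^{1/2}\bZ_k\stackrel{d}{\rightarrow}\mathcal{N}(\boldsymbol{0},\bj_k^{-1})$ independently across $k$. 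The CLT is thus applied \emph{within} each group (fixed $J$), and the combination step is a sum of $K$ independent asymptotically normal vectors; the remainders then appear cleanly as $O_p(n_{\max}^{1/2}N^{-1/2-\delta})$ on the information side and $O_p(N^{-\delta})$ on the score side, both $o_p(1)$ under $K=O(N^{1/2-\delta})$. This device sidesteps the operator-norm bookkeeping you flag as the main obstacle: because each $\bE_k$ has bounded norm and $\bj_k$ is fixed-dimensional, the eigenvalue control on $\sum_k\sum_i n_k^2\widehat{\bC}_{k,i}$ falls out of the per-group bounds rather than requiring a separate uniform-smallest-eigenvalue argument along the $\bH$-subspace.

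Your subject-level triangular-array approach should also go through, but it forces you to control $\bH\bj^{-1}\bs^T\bv^{-1}$, an $h\times(JKp+d)$ matrix, and to verify Lyapunov with summands that are non-identically distributed across groups; the paper's $\bE_k$ decomposition packages all of this for free. One small slip: the per-group Godambe informations aggregate with weights $n_k/N$, not $(n_k/N)^2$.
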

\begin{proofnoqed}[Proof of Theorem \ref{thm:dist-inf}]
Here we present major steps, with all necessary details available in Appendix \ref{subsec:appendix:proofs:dist-inf}. First, we know that $\left\| \bH \right\| \leq r \bar{\sigma}(\bH)$. Let $\lambda(\btheta, \bzeta)$ defined by \eqref{def:lambda}, such that $\lambda(\widehat{\btheta}_{DDIMM}, \widehat{\bzeta}_{DDIMM})=\boldsymbol{0}$. We show in Lemma \ref{thm:dist-inf:lemma-1} in Appendix \ref{subsec:appendix:proofs:dist-inf} that $\left\| \lambda(\btheta_0, \bzeta_0) \right\| = O_p(N^{-1/2-\delta} n^{1/2}_{\max})$ and
$\left\| \left\{ \nabla_{\btheta, \bzeta} \lambda(\btheta, \bzeta) \right\}^{-1} \right\| = O_p\left( N^{1/2+\delta} n^{-1}_{\max} \right)$. From the first-order Taylor expansion in \eqref{lambda-Taylor}, we have
\begin{align*}
\left\| \bH \left( \begin{array}{c} \widehat{\btheta}_{DDIMM}-\btheta_0 \\ \widehat{\bzeta}_{DDIMM} - \bzeta_0 \end{array} \right) \right\|&\leq \left\| \bH \right\| \left\| \left( \nabla_{\btheta, \bzeta} \lambda(\btheta, \bzeta) \rvert_{\btheta^*,\bzeta^*} \right)^{-1} \right\| \left\| \lambda(\btheta_0, \bzeta_0) \right\|\\
&\leq r \bar{\sigma}(\bH) O_p(n^{-1/2}_{\max}).
\end{align*}
Then $\bH(\widehat{\btheta}^T_{DDIMM}, \widehat{\bzeta}^T_{DDIMM})^T - \bH (\btheta^T_0, \bzeta^T_0)^T \stackrel{p}{\rightarrow} \boldsymbol{0}$ as $n_{\min} \rightarrow \infty$.\\
To derive the distribution of $\bH(\widehat{\btheta}^T_{DDIMM}, \widehat{\bzeta}^T_{DDIMM})^T$, first consider an arbitrary $k \in \left\{1, \ldots, K \right\}$. For convenience, denote
\begin{align*}
\bTau_k(\btheta, \bzeta_k)&=\veco \left(
\veco^J \left( \bPsi_{jk}(\btheta; \bzeta_{jk})\right), 
\veco^J \left( \bG_{jk}(\bzeta_{jk}; \btheta) \right)
\right) ,\\
\btau_{i,k}(\btheta, \bzeta_k)&=\veco \left(
\veco^J \left( \bpsi_{i,jk}(\btheta; \bzeta_{jk})\right),
\veco^J \left( \bg_{i,jk}(\bzeta_{jk}; \btheta) \right)
\right).
\end{align*}
By the Central Limit Theorem, $n^{1/2}_k \bTau_k(\btheta_0, \bzeta_{k0})=n^{-1/2}_k \sum_{i=1}^{n_k} \btau_{i,k}(\btheta_0, \bzeta_{k0}) \stackrel{d}{\rightarrow} \mathcal{N} \left(\boldsymbol{0}, \bv_k (\btheta_0, \bzeta_{k0}) \right)$ as $n_k \rightarrow \infty$, where $\bv_k(\btheta, \bzeta_k)=Var_{\btheta_0, \bzeta_{k0}} \left\{ \btau_{i,k}(\btheta, \bzeta_k) \right\}$. Define
\begin{align*}
\bs_k(\btheta, \bzeta_k) &=\left( \begin{array}{cc}
\veco^J \left( \bs^{\btheta}_{\bpsi_{jk}}(\btheta, \bzeta_{jk}) \right) &
\mbox{diag} \left\{ \bs^{\bzeta}_{\bpsi_{jk}}(\btheta, \bzeta_{jk}) \right\}_{j=1}^J \\
\veco^J \left( \bs^{\btheta}_{g_{jk}}(\btheta, \bzeta_{jk}) \right) & \mbox{diag} \left\{ \bs^{\bzeta}_{\bg_{jk}}(\btheta, \bzeta_{jk}) \right\}_{j=1}^J
\end{array} \right), \mbox{ and}\\
\bj_k(\btheta, \bzeta_k)&=\bs^T_k(\btheta, \bzeta)  \bv_k^{-1}(\btheta, \bzeta_k) \bs_k(\btheta, \bzeta_k).
\end{align*}
By \eqref{equiv-5-simp} in the proof of Theorem \ref{thm:dist-equiv}, Lemma \ref{lemma:add-results}, and Slutsky's theorem,
\begin{align*}
n^{1/2}_k  \bj_k(\btheta_0, \bzeta_{k0})  \left( \begin{array}{c}
\veco \left( \widehat{\btheta}_{jk} -\btheta_0 \right)_{j=1}^J\\
\widehat{\bzeta}_{k} - \bzeta_{k0}
\end{array} \right) \stackrel{d}{\rightarrow} \mathcal{N} \left(\boldsymbol{0}, \bj^{-1}_k(\btheta_0, \bzeta_{k0}) \right).
\end{align*}
Note that the above vectors are independent for $k=1, \ldots, K$. We establish in Lemma \ref{thm:dist-inf:lemma-2} in Appendix \ref{subsec:appendix:proofs:dist-inf} that, for some affine transformation matrices $\bE_k$, $k=1, \ldots, K$, of $\boldsymbol{0}$'s and $\boldsymbol{1}$'s,
\begin{align*}
\frac{n^2_k}{N^2} \sum \limits_{i=1}^J \widehat{\bC}_{k,i}\left( \begin{array}{c} \widehat{\btheta}_{ik}-\btheta_0 \\ \widehat{\bzeta}_{list} - \bzeta_0 \end{array} \right)
&=\frac{n_k}{N} \bE_k \bZ_k + O_p\left( N^{-1} \right),\\
\mbox{and}~~~\frac{n^2_k}{N^2} \sum \limits_{i=1}^J \widehat{\bC}_{k,i}&=\frac{n_k}{N} \bE_k \bj_k(\btheta_0, \bzeta_{k0}) \bE^T_k + O_p\left( n^{1/2}_kN^{-1} \right),
\end{align*}
where $n^{1/2}_k  \bZ_k \stackrel{d}{\rightarrow} \mathcal{N} \left(\boldsymbol{0}, \bj^{-1}_k(\btheta_0, \bzeta_{k0}) \right)$. It is clear that $\bj(\btheta, \bzeta)=\sum_{k=1}^K (n_k/N) \allowbreak \bE_k \bj_k(\btheta, \bzeta_k) \bE^T_k$. Since $\bE_k$ has finitely many $1$'s, $\left\| \bE_k \right\|$ is bounded. Since $\left\| \bj_k(\btheta, \bzeta_k) \right\|$ is also bounded, $\left\| \bj(\btheta, \bzeta) \right\|= O(K n_{\max} N^{-1})=O(1)$. $\bj(\btheta_0, \bzeta_0)$ is positive semi-definite and symmetric, implying that $\bH \bj^{-1}(\btheta_0, \bzeta_0) \bH^T$ is also positive semi-definite and symmetric. Following the monotone convergence theorem, we can write $\bH \bj^{-1}(\btheta_0, \bzeta_0) \bH^T \rightarrow \bj_{\bH} (\btheta_0, \bzeta_0)$, where $\bj_{\bH} (\btheta_0, \bzeta_0)$ exists and is a proper variance matrix.\\
Using the fact that $\lambda(\widehat{\btheta}_{DDIMM}, \widehat{\bzeta}_{DDIMM})=\boldsymbol{0}$ and $K=O(N^{1/2-\delta})$, we show in Lemma \ref{thm:dist-inf:lemma-3} in Appendix \ref{subsec:appendix:proofs:dist-inf} that $N^{1/2} \bH ( \widehat{\btheta}_{DDIMM}-\btheta_0, \widehat{\bzeta}_{DDIMM}- \bzeta_0 )$ can be rewritten as
\begin{align*}
\bH \left\{ \sum \limits_{k=1}^K \frac{n_k}{N} \bE_k \bj_k(\btheta_0, \bzeta_{k0}) \bE^T_k + O_p\left(n^{1/2}_{\max} N^{-1/2-\delta}\right) \right\} ^{-1}& \\
\left[ \sum \limits_{k=1}^K \left\{ \left( \frac{n_k}{N} \right)^{1/2} \bE_k n^{1/2}_k \bZ_k \right\} + O_p \left(N^{-\delta} \right) \right]&.
\end{align*}
Since $O_p(n^{1/2}_{\max}N^{-1/2-\delta}) =o_p(1)$ and $O_p(N^{-\delta}) =o_p(1)$, it follows as in the proof of Theorem \ref{thm:dist-equiv} that as $n_{\min}\rightarrow \infty$,
\begin{align*}
N^{1/2} \bH \left( \begin{array}{c}
\widehat{\btheta}_{DDIMM}-\btheta_0\\
\widehat{\bzeta}_{DDIMM}- \bzeta_0 
\end{array} \right) \stackrel{d}{\rightarrow} \mathcal{N} \left(\boldsymbol{0}, \bj_{\bH}(\btheta_0, \bzeta_0) \right).&
\qed
\end{align*}
\end{proofnoqed}
\noindent In practice, Theorem \ref{thm:dist-inf} suggests that we can tune our choice of $K$ and $n_{\min}$ to attain the desired trade-off between inference and computational speed: smaller $K$ and larger $n_{\min}$ will slow computations but improve estimation and asymptotic normality, whereas larger $K$ and smaller $n_{\min}$ will speed computations but worsen estimation and asymptotic normality. 

\subsection{Asymptotic results for diverging $K$ and $J$}
\label{subsec:implementation:asymptotics-KJdiv}

In general, asymptotics for diverging $J$ become very complicated and even analytically intractable depending on how, and to what extent, the dependence structure evolves as the dimension $M$ of $\bY$ goes to infinity ($M \rightarrow \infty$). \cite{Cox-Reid} propose constructing a pseudolikelihood from marginal densities when the full joint distribution is difficult to construct, and discuss asymptotics for increasing response dimensionality. To make the problem of diverging $M$ tractable, we consider the following regularity conditions:
\begin{enumerate}[label=(A.\arabic*)]
\setcounter{enumi}{\value{EE-conds}}
\item Stationarity: for each $M^* \in \mathbb{N}$ and each $(M^*+1)$-dimensional measurable set $B$ a subset of the sample space of $\bY$, the distribution of $\bY_i$ satisfies $P\left\{ (Y_{i,r}, \ldots, Y_{i,r+M^*}) \in B \right\}=P\left\{ (Y_{i,0}, \ldots, Y_{i,M^*} ) \in B \right\}$ for every $r \in \mathbb{N}$.
\label{cond-diverging-J-3.1}
\item Let $\bC_{k,i}$ be the version of $\widehat{\bC}_{k,i}$ in \eqref{def:C_ki} evaluated at the true values $\btheta_0, \bzeta_{jk0}$. For $k =1, \ldots, K$, $i=1, \ldots, J$, $( \sum_{l=1}^K \sum_{j=1}^J n^2_l \bC_{l,j})^{-1} n_k^2 \bC_{k,i}=O_p(N^{-\delta_1} )$ for a constant $0 \leq \delta_1 \leq 1/2$. This can be thought of as a type of Lindeberg condition.
\label{cond-diverging-J-3.2}
\item Conditions required for asymptotically normal distribution and efficiency of the GMM estimator $(\widehat{\btheta}_{opt}, \widehat{\bzeta}_{opt})$; see Theorem 5.4 in \cite{Donald-Imbens-Newey} and the spanning condition in \cite{Newey}. See \cite{Newey} for related work on semiparametric efficiency of the GMM estimator as the number of moment conditions goes to infinity. 
\label{cond-diverging-J-3.3}
\setcounter{EE-conds}{\value{enumi}}
\end{enumerate}
\begin{remark}
Condition \ref{cond-diverging-J-3.1} is typical for consistency and asymptotic normality of the GMM estimator $(\widehat{\btheta}_{opt}, \widehat{\bzeta}_{opt})$, following \cite{Hansen} and \cite{Newey}. It is a typical condition for the application of the central limit theorem to stochastic processes, i.e. to infinite dimensional random vectors. Additionally, in order to make statements about convergence in probability, \ref{cond-diverging-J-3.1} is required to ensure a valid joint probability distribution as the dimension $M$ increases.
\end{remark}
\begin{remark}
Condition \ref{cond-diverging-J-3.2} ensures the covariance of the outcome $\bY_i$ is appropriately controlled as $M \rightarrow \infty$. Alternative conditions may be considered, such as $\alpha$-mixing (\cite{Bradley}), $\rho$-mixing (\cite{Peligrad}), or $\phi$-mixing (\cite{Peligrad}), but this is beyond the scope of this paper. Condition \ref{cond-diverging-J-3.2} can be simplified for the case where $n_k=n$ for all $k=1, \ldots, K$. Then \ref{cond-diverging-J-3.2} becomes $( \sum_{l=1}^K \sum_{j=1}^J \bC_{l,j} )^{-1} \bC_{k,i}=O_p(N^{-\delta_1})$.
\end{remark}
In Theorem \ref{thm:dist-inf2} we show the consistency and asymptotic normality of the DDIMM estimator as $K$ and $J$ diverge to $\infty$.
\begin{theorem}
\label{thm:dist-inf2}
Suppose $N^{-\delta_2}n_{\min}$ and $N^{\delta_3-1/2} KJ$ are bounded as $n_{\min} \rightarrow \infty$ for constants $0 \leq \delta_2 \leq 1$ and $0< \delta_3<1/2$ such that $\delta_3 + \delta_1 + \delta_2/2>1$. Suppose assumptions \ref{consistent}, \ref{psiN-conds}, and \ref{cond-equiv}-\ref{cond-diverging-J-3.3} hold. Let $\bH \in \mathbb{R}^{h \times (p+d)}$ a matrix of rank $r \in \mathbb{N}$, $h \in \mathbb{N}$, $r \leq h$, with finite maximum singular value $\bar{\sigma}(\bH) < \infty$. Let $\bj_{\bH} (\btheta, \bzeta)$ as given in Theorem \ref{thm:dist-inf}. Then as $n_{\min} \rightarrow \infty$,
\begin{align*}
N^{1/2} \bH \left( \begin{array}{c}
\widehat{\btheta}_{DDIMM}-\btheta_0\\
\widehat{\bzeta}_{DDIMM}-\bzeta_0
\end{array} \right) \stackrel{d}{\rightarrow} \mathcal{N} \left(0, \bj_{\bH}(\btheta_0, \bzeta_0) \right).
\end{align*}
\end{theorem}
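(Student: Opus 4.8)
The plan is to replicate the architecture of the proof of Theorem \ref{thm:dist-inf}, but with two modifications forced by the double divergence. First, since $\widehat{\bzeta}_{DDIMM}$ and $\bzeta_0$ now live in a Euclidean space of diverging dimension $p+d$, I would work throughout with the finite-dimensional projection $\bH(\widehat{\btheta}^T_{DDIMM}, \widehat{\bzeta}^T_{DDIMM})^T$, reducing all distributional claims to finite-dimensional central limit statements via the Cramér--Wold device and the bound $\left\| \bH \right\| \leq r \bar{\sigma}(\bH)$. Second, the within-group central limit argument of Theorem \ref{thm:dist-inf} must be upgraded to accommodate a diverging number $J$ of response blocks, which is exactly what conditions \ref{cond-diverging-J-3.1} (stationarity, guaranteeing a well-defined limiting law for the infinite-dimensional $\bY_i$) and \ref{cond-diverging-J-3.3} (the Donald--Imbens--Newey and Newey conditions for GMM asymptotics with a growing number of moment conditions) are imposed to supply. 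The rate condition $\delta_3 + \delta_1 + \delta_2/2 > 1$ is the bookkeeping hypothesis that makes the accumulated approximation errors over $KJ$ blocks vanish.

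First I would establish consistency. Using $\lambda(\btheta, \bzeta)$ from \eqref{def:lambda}, for which $\lambda(\widehat{\btheta}_{DDIMM}, \widehat{\bzeta}_{DDIMM}) = \boldsymbol{0}$, I would bound $\left\| \lambda(\btheta_0, \bzeta_0) \right\|$ by writing it as a weighted sum over the $KJ$ blocks of the block errors $\widehat{\btheta}_{ik} - \btheta_0 = O_p(n_k^{-1/2})$ and $\widehat{\bzeta}_{ik} - \bzeta_{ik0} = O_p(n_k^{-1/2})$ (from \ref{cond-equiv}), with weights $\left( \sum_{l,j} n^2_l \bC_{l,j} \right)^{-1} n^2_k \bC_{k,i} = O_p(N^{-\delta_1})$ controlled by the Lindeberg-type condition \ref{cond-diverging-J-3.2}; combined with the growth rates $KJ = O(N^{1/2-\delta_3})$ and $n_{\min} = \Omega(N^{\delta_2})$ this yields $\left\| \lambda(\btheta_0, \bzeta_0) \right\| \stackrel{p}{\rightarrow} \boldsymbol{0}$. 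A diverging analogue of Lemma \ref{lemma:add-results} gives $\frac{1}{N^2} \sum_{k,i} n^2_k \widehat{\bC}_{k,i} = \bj(\btheta_0, \bzeta_0) + o_p(1)$ with $\bj$ of bounded norm, so that $\left\| \{ \nabla_{\btheta, \bzeta} \lambda \}^{-1} \right\|$ is controlled; plugging into the first-order Taylor expansion \eqref{lambda-Taylor} gives $\bH(\widehat{\btheta}^T_{DDIMM}, \widehat{\bzeta}^T_{DDIMM})^T - \bH(\btheta^T_0, \bzeta^T_0)^T \stackrel{p}{\rightarrow} \boldsymbol{0}$.

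For the distributional part I would again decompose by subject group $k$. Within group $k$, $n^{1/2}_k \bTau_k(\btheta_0, \bzeta_{k0}) = n^{-1/2}_k \sum_{i=1}^{n_k} \btau_{i,k}(\btheta_0, \bzeta_{k0})$ is now of diverging dimension, so instead of the elementary CLT I would invoke the many-moments GMM normality guaranteed by \ref{cond-diverging-J-3.3}, together with \ref{cond-diverging-J-3.1}, to obtain (after $\bH$-projection and Cramér--Wold) that $n^{1/2}_k \bj_k(\btheta_0, \bzeta_{k0}) ( \veco(\widehat{\btheta}_{jk} - \btheta_0)^T_{j=1}^{J}, (\widehat{\bzeta}_k - \bzeta_{k0})^T )^T$ is asymptotically $\mathcal{N}(\boldsymbol{0}, \bj^{-1}_k)$, via \eqref{equiv-5-simp} and Slutsky. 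The groups being independent, I would assemble them through the affine matrices $\bE_k$ as in Theorem \ref{thm:dist-inf}, so that $\bj(\btheta, \bzeta) = \sum_{k=1}^{K} (n_k/N) \bE_k \bj_k(\btheta, \bzeta_k) \bE^T_k$ and $\bH \bj^{-1}(\btheta_0, \bzeta_0) \bH^T \rightarrow \bj_{\bH}(\btheta_0, \bzeta_0)$ as already established there. Finally, using $\lambda(\widehat{\btheta}_{DDIMM}, \widehat{\bzeta}_{DDIMM}) = \boldsymbol{0}$ and the Taylor expansion, I would write
\[
N^{1/2} \bH \left( \begin{array}{c} \widehat{\btheta}_{DDIMM} - \btheta_0 \\ \widehat{\bzeta}_{DDIMM} - \bzeta_0 \end{array} \right) = \bH \left\{ \sum_{k=1}^{K} \frac{n_k}{N} \bE_k \bj_k(\btheta_0, \bzeta_{k0}) \bE^T_k + R_1 \right\}^{-1} \left[ \sum_{k=1}^{K} \left( \frac{n_k}{N} \right)^{1/2} \bE_k n^{1/2}_k \bZ_k + R_2 \right],
\]
show $R_1 = o_p(1)$ and $R_2 = o_p(1)$ using $\delta_3 + \delta_1 + \delta_2/2 > 1$, and conclude the stated $\mathcal{N}(0, \bj_{\bH}(\btheta_0, \bzeta_0))$ limit.

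The main obstacle will be the error accounting showing $R_1, R_2 = o_p(1)$: the higher-order remainders $O_p(n^{-1}_k)$ from \eqref{equiv-5-simp} and the estimation errors in $\widehat{\bC}_{k,i}$, $\widehat{\bS}_{jk}$, $\widehat{\bV}_N$ now accumulate over $KJ \asymp N^{1/2-\delta_3}$ blocks rather than $K$, and the factor $N^{-\delta_1}$ from \ref{cond-diverging-J-3.2} together with the lower-bound rate $n_{\min} = \Omega(N^{\delta_2})$ must jointly dominate this accumulation — this is precisely the content of $\delta_3 + \delta_1 + \delta_2/2 > 1$, and getting the exponents to line up for each remainder term is the crux of the argument. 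A secondary, more conceptual difficulty is rigorously justifying the within-group central limit theorem with $J \rightarrow \infty$, which rests on the semiparametric-efficiency-with-growing-moments results cited in \ref{cond-diverging-J-3.3} and on stationarity \ref{cond-diverging-J-3.1} ensuring the limiting infinite-dimensional distribution is well defined.
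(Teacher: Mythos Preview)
Your proposal takes a genuinely different route from the paper. You attempt to replicate the direct architecture of Theorem \ref{thm:dist-inf}: establish consistency via $\lambda$, then build the asymptotic distribution of $\widehat{\btheta}_{DDIMM}$ from scratch by proving a within-group CLT (now with diverging $J$), assembling groups via the $\bE_k$, and controlling remainders $R_1, R_2$ through the rate arithmetic $\delta_3 + \delta_1 + \delta_2/2 > 1$. The paper instead anchors everything to the optimal GMM estimator: it writes
\[
\bH \left( \begin{array}{c}\widehat{\btheta}_{DDIMM}-\btheta_0 \\ \widehat{\bzeta}_{DDIMM}-\bzeta_0\end{array}\right)
= \bH \left( \begin{array}{c}\widehat{\btheta}_{DDIMM}-\widehat{\btheta}_{opt} \\ \widehat{\bzeta}_{DDIMM}-\widehat{\bzeta}_{opt}\end{array}\right)
+ \bH \left( \begin{array}{c}\widehat{\btheta}_{opt}-\btheta_0 \\ \widehat{\bzeta}_{opt}-\bzeta_0\end{array}\right),
\]
takes the asymptotic normality of the second term as given by assumption \ref{cond-diverging-J-3.3}, and reduces the entire proof to showing the first term is $o_p(N^{-1/2})$. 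That bound follows in a few lines: $(\widehat{\btheta}_{ik}-\widehat{\btheta}_{opt}, \widehat{\bzeta}_{ik}-\widehat{\bzeta}_{opt,ik}) = O_p(n_k^{-1/2})$, the weights $\bigl(\sum_{l,j} n_l^2 \widehat{\bC}_{l,j}\bigr)^{-1} n_k^2 \widehat{\bC}^*_{k,i} = O_p(N^{-\delta_1})$ by \ref{cond-diverging-J-3.2}, and summing over $KJ = O(N^{1/2-\delta_3})$ blocks with $n_{\min}^{-1/2} = O(N^{-\delta_2/2})$ gives $O_p(N^{1/2-\delta_3-\delta_1-\delta_2/2}) = o_p(N^{-1/2})$.

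The paper's anchoring trick is considerably shorter and cleaner: it offloads the hard part (CLT with diverging moment conditions) entirely onto assumption \ref{cond-diverging-J-3.3}, which is stated precisely as ``conditions required for asymptotically normal distribution of $(\widehat{\btheta}_{opt}, \widehat{\bzeta}_{opt})$''. Your reading of \ref{cond-diverging-J-3.3} as supplying a \emph{within-group} many-moments CLT for $n_k^{1/2} \bTau_k(\btheta_0, \bzeta_{k0})$ is not quite what the condition says --- it is about the full-sample GMM estimator, not block-level score sums --- so your direct route would need either a reformulation of \ref{cond-diverging-J-3.3} or an additional argument to get the group-$k$ limit you want. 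Your approach, if carried through, would be more self-contained and would illuminate where each rate constraint bites; the paper's buys brevity at the cost of a heavier black-box assumption.
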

\begin{proofnoqed}
Write
\begin{align*}
\bH \left( \begin{array}{c}
\widehat{\btheta}_{DDIMM} - \btheta_0\\
\widehat{\bzeta}_{DDIMM} - \bzeta_0 
\end{array} \right)&=
\bH \left( \begin{array}{c}
\widehat{\btheta}_{DDIMM} - \widehat{\btheta}_{opt}\\
\widehat{\bzeta}_{DDIMM} - \widehat{\bzeta}_{opt}
\end{array} \right) + \bH
\left( \begin{array}{c}
\widehat{\btheta}_{opt}- \btheta_0\\
\widehat{\bzeta}_{opt} - \bzeta_0
\end{array} \right).
\end{align*}
To show the asymptotic distribution of the left-hand side, it is sufficient to show that $\bH (
\widehat{\btheta}^T_{DDIMM}-\widehat{\btheta}^T_{opt}, \widehat{\bzeta}^T_{DDIMM}-\widehat{\bzeta}^T_{opt})^T =o_p(N^{-1/2})$.\\
Given the assumptions of the theorem, we have the asymptotic distribution of $(\widehat{\btheta}_{opt}, \widehat{\bzeta}_{opt,ik})$ and $(\widehat{\btheta}_{ik}, \widehat{\bzeta}_{ik})$: both are consistent estimators of $\btheta_0, \bzeta_{ik0}$ and asymptotically normally distributed with rates $N^{-1/2}$ and $n^{-1/2}_k$ respectively. Then for each $k \in \left\{1, \ldots, K \right\}$,
\begin{align*}
\left( \begin{array}{c} 
\widehat{\btheta}_{opt} - \widehat{\btheta}_{ik} \\ 
\widehat{\bzeta}_{opt,ik} - \widehat{\bzeta}_{ik} 
\end{array} \right) &=
\left( \begin{array}{c} 
\widehat{\btheta}_{opt} - \btheta_0 \\ 
\widehat{\bzeta}_{opt,ik} - \bzeta_{ik0}
\end{array} \right) -
\left( \begin{array}{c} 
\widehat{\btheta}_{ik} - \btheta_0 \\ 
 \widehat{\bzeta}_{ik} -\bzeta_{ik0}
\end{array} \right)
= O_p ( n^{-1/2}_k ).
\end{align*}
Defining $\widehat{\bC}^*_{k,i}$ a subset of $\widehat{\bC}_{k,i}$ in Appendix \ref{sec:appendix:technical:Cki}, we can rewrite $(
\widehat{\btheta}^T_{DDIMM}-\widehat{\btheta}^T_{opt}, \widehat{\bzeta}^T_{DDIMM}-\widehat{\bzeta}^T_{opt})^T$ as follows:
\begin{align*}
&\left( \sum \limits_{k=1}^K \sum \limits_{i=1}^J n^2_k \widehat{\bC}_{k,i} \right)^{-1} \left\{ \sum \limits_{k=1}^K \sum \limits_{i=1}^J \left[ n^2_k \widehat{\bC}_{k,i} 
\left( \begin{array}{c} \widehat{\btheta}_{ik}-\widehat{\btheta}_{opt} \\ \widehat{\bzeta}_{list}-\widehat{\bzeta}_{opt} \end{array} \right)
\right] \right\}\\
=& \sum \limits_{k=1}^K \sum \limits_{i=1}^J \left[ \left( \sum \limits_{l=1}^K \sum \limits_{j=1}^J n^2_l \widehat{\bC}_{l,j} \right)^{-1} n^2_k \widehat{\bC}^*_{k,i} 
\left( \begin{array}{c} \widehat{\btheta}_{ik}-\widehat{\btheta}_{opt} \\ \widehat{\bzeta}_{ik}-\widehat{\bzeta}_{opt,ik} \end{array} \right) 
\right] \\
=& \sum \limits_{k=1}^K \sum \limits_{i=1}^J \left[ O_p(N^{-\delta_1}) O_p(n^{-1/2}_k) \right]=O_p(KJN^{-\delta_1}n^{-1/2}_{\min}) \\
=& O_p(N^{1/2-\delta_3} N^{-\delta_1}N^{-\delta_2/2})=O_p(N^{1/2-\delta_3-\delta_1-\delta_2/2})=o_p(N^{-1/2}).
\qed
\end{align*}
\end{proofnoqed}

\vspace{-3em}

\section{SIMULATIONS}
\label{sec:simulations}

In this section we consider two sets of simulations to examine the performance of the closed-form estimator $\widehat{\btheta}_{DDIMM}$ under the linear regression setting $\bmu_i=\bX_i \btheta$, where $\bmu_i=E(\bY_i \lvert \bX_i, \btheta)$ and $\bY_i \sim \mathcal{N}(\bX_i \btheta, \bSigma)$. The first set illustrates the finite sample performance and properties in Theorem \ref{thm:dist-equiv} of $\widehat{\btheta}_{DDIMM}$ with fixed sample size $N$, varying number of subject groups $K$, varying dimensions $M$ of $\bY$, and fixed number of response blocks $J$. The second set of simulations illustrates the performance and properties in Theorem \ref{thm:dist-inf2} of $\widehat{\btheta}_{DDIMM}$ with growing sample size $N$ and response dimension $M$ of $\bY$, and varying number of subjects groups $K$ and response blocks $J$. In both settings, covariates consist of an intercept and two independently simulated $M$-dimensional multivariate normal variables, and the true value of $\btheta$ is set to $\btheta_0=(0.3, 0.6, 0.8)^T$. Simulations are conducted using R software on a standard Linux cluster.\\
We describe the first set of simulations. We specify $\bSigma=\bS \otimes \bA$ with nested correlation structure, where $\otimes$ denotes the Kronecker product, $\bA$ is an AR(1) covariance matrix with standard deviation $\sigma=4$ and correlation $\rho=0.8$, and $\bS$ is a randomly simulated $J \times J$ positive-definite matrix. We consider varying dimensions $M$ of $\bY$ with fixed $J=5$, and a fixed sample size $N=5,000$ with varying $K=1,2,5$. We consider two supervised learning procedures: the pairwise composite likelihood using our own package, and the GEE using R package \verb|geepack| and our own package (see Supplemental Material). With each procedure, 
\begin{figure}[H]
\centerline{\includegraphics[width=\linewidth]{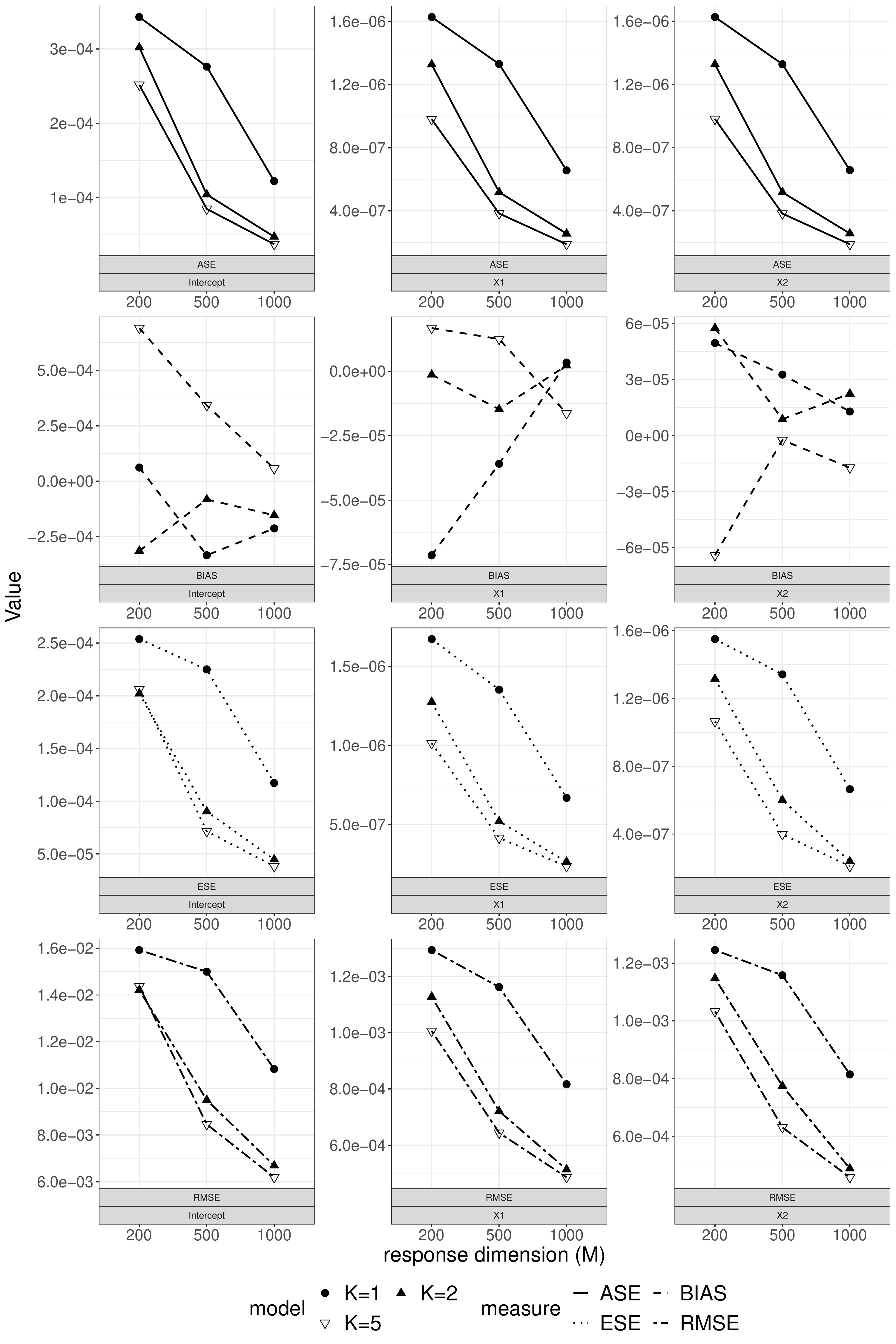}}
\caption{Plot of simulation metrics for GEE, averaged over 1,000 simulations.}\label{GEE-sim-measures}
\end{figure}
\noindent we fit the model with an AR(1) working block correlation structure. Results for the GEE are in Figure \ref{GEE-sim-measures}; results for the pairwise composite likelihood (CL) are in the Supplemental Material. We see that the mean asymptotic standard error (ASE) of $\widehat{\btheta}_{DDIMM}$ approximates the empirical standard error (ESE) for all models, with slight variations due to the type of covariates simulated. This means the covariance formula in Theorem \ref{thm:dist-equiv} is correct. Additionally, $\widehat{\btheta}_{DDIMM}$ appears consistent since root mean squared error (RMSE), ASE and ESE are approximately equal. Moreover, we notice the ASE of $\widehat{\btheta}_{DDIMM}$ decreases as the response dimension $M$ increases. This makes intuitive sense, since an increase in $M$ corresponds to an increase in overall number of observations, resulting in increased power. We also see a decrease in the ASE as the number of groups increases. This is due to the heterogeneity of block covariance parameters. Lastly, we observe from Table \ref{simulations-comp-time} that the mean CPU time is very fast for the GEE, and decreases substantially as the number of subject groups increases.
\begin{table}[H]
\centering
\captionsetup{justification=centering}
\begin{tabular}{r|lll}
  \hline
\multirow{ 2}{*}{Response dimension}  & \multicolumn{3}{l}{Number of subject groups} \\
 & K=1 & K=2 & K=5 \\ 
  \hline
M=200 & 45 & 23 & 11 \\ 
  M=500 & 351 & 184 & 87 \\ 
  M=1,000 & 1956 & 961 & 417 \\ 
   \hline
\end{tabular}
\smallskip
\caption{Mean CPU time in seconds for each setting with the GEE block analysis, averaged over 1,000 simulations. Mean CPU time is computed as the maximum CPU time taken over parallelized block analyses added to the CPU time taken by the rest of the procedure.}
\label{simulations-comp-time}
\end{table}
\noindent We describe the second set of simulations, where we consider diverging sample size $N$ and response dimension $M$, and diverging number of subject groups $K$ and response blocks $J$. We consider two settings: in Setting I, we let the sample size $N=5,000$ with number of response groups $K=1$, and let response dimension $M=4,500$ with number of response blocks $J=6$; in Setting II, we let the sample size $N=10,000$ with number of response groups $K=2$, and let response dimension $M=9,000$ with number of response blocks $J=12$. Responses are simulated from a Multivariate Normal distribution with AR(1) covariance structure, with standard deviation $\sigma=6$ and correlation $\rho=0.8$. This means there are no heterogeneous block parameters, so we expect a slightly less efficient estimator since there is less variability in the outcome. We learn mean and covariance parameters using GEE with an AR(1) working block correlation structure. Mean bias (BIAS), RMSE, ESE and ASE of $\widehat{\btheta}_{DDIMM}$ are in Table \ref{simulations-huge-RMSE-ESE-ASE}. We observe that RMSE, ESE and ASE are very close, indicating appropriate estimation of $\widehat{\btheta}_{DDIMM}$ and its covariance in Theorem \ref{thm:dist-inf2}. We also confirm DDIMM's ability to handle large sample size $N$ and response dimension $M$.
\begin{table}[h!]
\centering
\captionsetup{justification=centering}
\begin{tabular}{rrrrr}
  \hline
Setting & Measure & Intercept & $X_1$ & $X_2$ \\ 
  \hline
I  & RMSE/BIAS & $3.89$/$-1.77$ & $0.64$/$0.09$ & $0.60$/$-0.40$\\
   & ESE/ASE & $3.89$/$3.78$ & $0.64$/$0.59$ & $0.60$/$0.59$\\
II & RMSE/BIAS & $1.86$/$-0.99$ & $0.28$/$-0.03$ & $0.28$/$-0.17$\\
   & ESE/ASE & $1.86$/$1.70$ & $0.28$/$0.27$ & $0.28$/$0.27$\\
  \hline
\end{tabular}
\smallskip
\caption{RMSE$\times 10^{-3}$, BIAS$\times 10^{-4}$, ESE$\times 10^{-3}$, ASE$\times 10^{-3}$ for each setting and each covariate, averaged over 500 simulations.}
\label{simulations-huge-RMSE-ESE-ASE}
\end{table}

\section{DISCUSSION}
\label{sec:discussion}
We have presented the large sample theory as a theoretical guarantee for a Doubly Distributed and Integrated Method of Moments (DDIMM) that incorporates a broad class of supervised learning procedures into a doubly distributed and parallelizable computational scheme for the efficient analysis of large samples of high-dimensional correlated responses in the MapReduce framework. Theoretical challenges related to combining correlated estimators were addressed in the proofs, including the asymptotic properties of the proposed closed-form estimator with fixed and diverging numbers of subject groups and response blocks.\\
The GMM approach to deriving the combined estimator $(\widehat{\btheta}_c, \widehat{\bzeta}_c)$ proposed in \eqref{def:combined-estimator-nuisance} requires only weak regularity of the estimating equations $\bPsi_{jk}$ and $\bG_{jk}$. These assumptions are satisfied by a broad range of learning procedures. The closed-form estimator proposed in equation \eqref{one-step-estimator}, on the other hand, requires local $n^{1/2}_k$-consistent estimators in individual blocks of size $n_k$, which is easily satisfied if $\bPsi_{jk}$ and $\bG_{jk}$ are regular (see \cite{Song} Chapter 3.5 for a definition of regular inference functions). This restricts the class of possible learning procedures, but still includes many analyses of interest.\\
A detailed discussion of the limitations and trade-offs of the single split DIMM with CL block analyses is featured in \cite{Hector-Song}. As mentioned in Section \ref{sec:implementation}, the DDIMM introduces additional flexibility in trading off between computational speed and inference: the number of subject groups $K$ and the smallest block size $n_{\min}$ can be chosen by the investigator to attain the desired speed and efficiency.\\
Particular applications of DDIMM to time series data are immediately obvious. Similarly, we envision potential application to nation-wide hospital daily visit numbers of, for example, asthma patients, over the course of the last decade. One could split the response (hospital daily intake/daily stock price) into $J$ years and into $K$ groups (of hospitals/stocks), analyze blocks separately and in parallel using GEE, and combine results using DDIMM. Finally, extensions of our work to stochastic process modelling are accessible, with more challenging work involving regularization of $\btheta$ also of interest.

\appendix

\section{Technical details}
\label{sec:appendix:technical}

\setcounter{table}{0}
\renewcommand{\thetable}{A.\arabic{table}}

\subsection{Summary of sensitivity matrix formulas}
\label{sec:appendix:technical:sensitivity}

Sensitivity matrices are summarized in Table \ref{sensitivity-summary}.

\begin{table}[h!]
\vspace{0.3cm}
\centering
\captionsetup{justification=centering}
\scalebox{1}{
\begin{tabular}{|c c| |c c c|} 
 \hline
sensitivity of & w.r.t.*  & population & sample & plug-in sample \\ [0.5ex] 
 \hline\hline
$\bpsi_{i,jk}$ & $\btheta$ & $\bs^{\btheta}_{\bpsi_{jk}} (\btheta, \bzeta_{jk})$ & $\bS^{\btheta}_{\bpsi_{jk}} (\btheta, \bzeta_{jk})$ & $\widehat{\bS}^{\btheta}_{\bpsi_{jk}}=\bS^{\btheta}_{\bpsi_{jk}} (\widehat{\btheta}_{jk}, \widehat{\bzeta}_{jk})$ \\
$\bpsi_{i,jk}$ & $\bzeta_{jk}$ & $\bs^{\bzeta}_{\bpsi_{jk}}(\btheta, \bzeta_{jk})$ &$\bS^{\bzeta}_{\bpsi_{jk}} (\btheta, \bzeta_{jk})$ & $\widehat{\bS}^{\bzeta}_{\bpsi_{jk}}=\bS^{\bzeta}_{\bpsi_{jk}} (\widehat{\btheta}_{jk}, \widehat{\bzeta}_{jk})$ \\ 
$\bg_{i,jk}$ & $\btheta$ & $\bs^{\btheta}_{\bg_{jk}} (\btheta, \bzeta_{jk})$ & $\bS^{\btheta}_{\bg_{jk}} (\btheta, \bzeta_{jk})$ & $\widehat{\bS}^{\btheta}_{\bg_{jk}}=\bS^{\btheta}_{\bg_{jk}} (\widehat{\btheta}_{jk}, \widehat{\bzeta}_{jk})$ \\
$\bg_{i,jk}$ & $\bzeta_{jk}$ & $\bs^{\bzeta}_{\bg_{jk}} (\btheta, \bzeta_{jk})$ & $\bS^{\bzeta}_{\bg_{jk}} (\btheta, \bzeta_{jk})$ & $\widehat{\bS}^{\bzeta}_{\bg_{jk}}=\bS^{\bzeta}_{\bg_{jk}} (\widehat{\btheta}_{jk}, \widehat{\bzeta}_{jk})$ \\
$\veco \left( \bpsi_{i,jk}, \bg_{i,jk} \right)$ & $(\btheta, \bzeta_{jk})$ & $\bs_{jk} (\btheta, \bzeta_{jk})$ & $\bS_{jk} (\btheta, \bzeta_{jk})$ & $\widehat{\bS}_{jk}=\bS_{jk} (\widehat{\btheta}_{jk}, \widehat{\bzeta}_{jk})$ \\[1ex] 
 \hline
\end{tabular}
}
\smallskip
\caption{Summary of sensitivity formulas. Formulas that are not used are marked ``---''.\\
*``w.r.t.'' shorthand for ``with respect to''. }
\label{sensitivity-summary}
\end{table}


\subsection{Subsetting operation on variability matrices}
\label{sec:appendix:technical:subset}

Operation $\left[\widehat{\bV}^{\bpsi}_N \right]_{ij:k}$ extracts a submatrix of $\widehat{\bV}^{\bpsi}_N$ consisting of rows $\left\{ (i-1)+(k-1)J \right\} p+1$ to $\left\{ i+(k-1)J\right\} p$ and columns $\left\{ j-1+(k-1)J \right\} p+1$ to $\left\{ j+(k-1)J \right\} p$. Operation $\left[ \widehat{\bV}^{\bg}_N \right]_{ij:k}$ extracts a submatrix of $\widehat{\bV}^{\bg}_N$ consisting of rows $1+D^{ik}$ to $d_{ik}+D^{ik}$ and columns $1+D^{jk}$ to $d_{jk} + D^{jk}$. Operation $\left[ \widehat{\bV}^{\bpsi \bg}_N \right]_{ij:k}$ extracts a submatrix of $\widehat{\bV}^{\bpsi \bg}_N$ consisting of rows $\left\{ (i-1)+(k-1)J \right\} p+1$ to $\left\{ i+(k-1)J \right\} p$ and columns $1+D^{jk}$ to $d_{jk} + D^{jk}$, where $d_{jk}$ is the dimension of $\bzeta_{jk}$ and $D^{jk}$ is defined in Section \ref{subsec:implementation:def-C_ki}.

\subsection{Cumulative sum of dimensions of $\bzeta$}
\label{sec:appendix:technical:Dik}

Recall that we define $D^{ik}$ as the sum of the dimensions of $\bzeta_{11}, \ldots, \bzeta_{i-1k}$, and $D^k$ as the sum of the dimensions of $\bzeta_{11}, \ldots, \bzeta_{Jk-1}$. Specifically, let $D^{ik}= \sum_{l=1}^{k-1} \sum_{j=1}^{J} d_{j l}+\sum_{j=1}^{i-1} d_{jk}$ for $i,k>1$, $D^{1k}= \sum_{l=1}^{k-1} \sum_{j=1}^{J} d_{j l}$ for $k>1$, and $D^{11}=0$. Let $D^k= \sum_{l=1}^{k-1} d_{l}$ for $k>1$ and $D^1=0$.

\subsection{Definition of $\widehat{\bC}^*_{k,i}$}
\label{sec:appendix:technical:Cki}

Let $k \in \left\{1, \ldots, K \right\}$ and $i \in \left\{1, \ldots, J \right\}$. Recall the definitions of $\widehat{\bA}^{\btheta}_{k,ij}$, $\widehat{\bA}^{\bzeta}_{k,ij}$, $\widehat{\bB}^{\btheta}_{k,ij}$ and $\widehat{\bB}^{\bzeta}_{k,ij}$ in Section \ref{subsec:implementation:def-C_ki}. Define
\begin{align*}
\widehat{\bC}^*_{k,i}&=\left( \begin{array}{cc}
\sum \limits_{j=1}^J \widehat{\bA}^{\btheta}_{k,ij} &\sum \limits_{j=1}^J \widehat{\bA}^{\bzeta}_{k,ij}  \\
\multicolumn{2}{@{}c@{\quad}}{\boldsymbol{0}_{D^{ik} \times (p+d)}}\\
\widehat{\bB}^{\btheta}_{k,i1} & \widehat{\bB}^{\bzeta}_{k,i1} \\
\multicolumn{2}{@{}c@{\quad}}{ ~~\vdots }\\
\widehat{\bB}^{\btheta}_{k,iJ} &\widehat{\bB}^{\bzeta}_{k,iJ}\\
\multicolumn{2}{@{}c@{\quad}}{ \boldsymbol{0}_{(d-d_{ik}-D^{ik}) \times (p+d)}}
\end{array} \right).
\end{align*}

\section{Additional proofs}
\label{sec:appendix:proofs}

\subsection{Proof of Theorem \ref{thm:dist-equiv}:}
\label{subsec:appendix:proofs:dist-equiv}
The following lemmas complete the proof of Theorem \ref{thm:dist-equiv} given in the paper, under the assumed conditions.
\begin{thmlemma}
\label{appendix:lemma:lambda}
Define $\lambda(\btheta, \bzeta)$ as in \eqref{def:lambda} in the proof of Theorem \ref{thm:dist-equiv}. Then $\lambda(\btheta_0, \bzeta_0) \stackrel{p}{\rightarrow} 0$ as $n_{\min} \rightarrow \infty$.
\end{thmlemma}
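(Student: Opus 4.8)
The plan is to exploit the explicit block form of $\widehat{\bC}_{k,i}$ in \eqref{def:C_ki} together with the $n_k^{1/2}$-consistency of the block estimators supplied by \ref{cond-equiv}, reducing the statement to a bound on finitely many ($KJ$) summands. Writing
$\lambda(\btheta_0,\bzeta_0)=\frac{1}{N^2}\sum_{k=1}^{K}\sum_{i=1}^{J}n_k^2\,\widehat{\bC}_{k,i}\left(\begin{array}{c}\btheta_0-\widehat{\btheta}_{ik}\\ \bzeta_0-\widehat{\bzeta}_{list}\end{array}\right)$, I would show that each term is $o_p(1)$ and that, since $K$ and $J$ are fixed, the sum inherits the same bound.

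The key preliminary is an order-of-magnitude bound on $\tfrac{n_k^2}{N^2}\widehat{\bC}_{k,i}$. Because subjects assigned to distinct groups satisfy $\delta_i(k)\delta_i(k')=0$ for $k\neq k'$, the matrix $\widehat{\bV}_N$ in \eqref{V_hat_N} is block-diagonal across the $K$ subject groups, with the group-$k$ block being an average over $n_k$ subjects rescaled by $n_k/N$, hence of order $O_p(n_k/N)$; by \ref{psiN-conds} it converges, after this rescaling, to the inverse-relevant block of the positive-definite limit $\bv(\btheta_0,\bzeta_0)$. Consequently the corresponding subsetted blocks of $\widehat{\bV}^{-1}_N$ appearing in $\widehat{\bA}^{\boeta}_{k,ij}$ and $\widehat{\bB}^{\boeta}_{k,ij}$ are $O_p(N/n_k)$. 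Combining this with Lemma \ref{lemma:add-results}, which gives $\widehat{\bS}_{jk}=\bs_{jk}(\btheta_0,\bzeta_{jk0})+O_p(n_k^{-1/2})=O_p(1)$, yields $\widehat{\bA}^{\boeta}_{k,ij}=\widehat{\bB}^{\boeta}_{k,ij}=O_p(N/n_k)$, so $\widehat{\bC}_{k,i}=O_p(N/n_k)$ and $\tfrac{n_k^2}{N^2}\widehat{\bC}_{k,i}=O_p(n_k/N)=O_p(1)$ since $n_k\le N$.

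Next I would use the zero-padding in \eqref{def:C_ki}: the only nonzero columns of $\widehat{\bC}_{k,i}$ are those for $\btheta$ and for $\bzeta_{ik}$, so the product $\widehat{\bC}_{k,i}(\btheta_0-\widehat{\btheta}_{ik},\ \bzeta_0-\widehat{\bzeta}_{list})^{T}$ depends only on $\btheta_0-\widehat{\btheta}_{ik}$ and $\bzeta_{ik0}-\widehat{\bzeta}_{ik}$, both of which are $O_p(n_k^{-1/2})$ by \ref{cond-equiv}. With $K$ and $J$ fixed this gives
\[
\bigl\|\lambda(\btheta_0,\bzeta_0)\bigr\|\le\sum_{k=1}^{K}\sum_{i=1}^{J}\Bigl\|\tfrac{n_k^2}{N^2}\widehat{\bC}_{k,i}\Bigr\|\,O_p(n_k^{-1/2})=\sum_{k=1}^{K}\sum_{i=1}^{J}O_p(1)\,O_p(n_k^{-1/2})=O_p(n_{\min}^{-1/2})\xrightarrow{p}0,
\]
proving the claim. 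A slicker alternative would be to recognize, via the local linearization \eqref{equiv-5-simp} and the identity $\frac{1}{N^2}\sum_{k}\sum_{i}n_k^2\widehat{\bC}_{k,i}=\widehat{\bS}^T\widehat{\bV}^{-1}_N\widehat{\bS}$ from Lemma \ref{lemma:add-results}, that $\lambda(\btheta_0,\bzeta_0)$ coincides up to $o_p(1)$ remainders with $\pm\widehat{\bS}^T\widehat{\bV}^{-1}_N\bTau_N(\btheta_0,\bzeta_0)$, after which the conclusion follows from $N^{1/2}\bTau_N(\btheta_0,\bzeta_0)=O_p(1)$ by the Central Limit Theorem together with $\widehat{\bS}^T\widehat{\bV}^{-1}_N=O_p(1)$ (the $n_k/N$ factors in $\widehat{\bS}$ cancelling the $N/n_k$ blow-up in $\widehat{\bV}^{-1}_N$).

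I expect the main obstacle to be precisely the order bookkeeping for $\widehat{\bC}_{k,i}$: one must notice the subject-group block-diagonal structure of $\widehat{\bV}_N$, track the resulting $O_p(N/n_k)$ blow-up of the relevant blocks of $\widehat{\bV}^{-1}_N$, and verify that it is exactly offset by the $n_k^2/N^2$ prefactor so that $\tfrac{n_k^2}{N^2}\widehat{\bC}_{k,i}=O_p(1)$; without a clean bound here the finite-sum argument does not close. A minor secondary point is confirming from the explicit form \eqref{def:C_ki} that the matrix product isolates only the within-block estimation error $(\btheta_0-\widehat{\btheta}_{ik},\ \bzeta_{ik0}-\widehat{\bzeta}_{ik})$, rather than the full high-dimensional vector $\bzeta_0-\widehat{\bzeta}_{list}$.
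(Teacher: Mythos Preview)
Your proposal is correct and follows essentially the same approach as the paper: both arguments combine the $O_p(1)$ size of $(n_k^2/N^2)\widehat{\bC}_{k,i}$ (which the paper obtains in one line by invoking Lemma \ref{lemma:add-results}, where you instead unpack the block-diagonal structure of $\widehat{\bV}_N$ and the $N/n_k$ scaling explicitly) with the $O_p(n_k^{-1/2})$ block estimation error from \ref{cond-equiv}, then conclude via a finite sum since $K$ and $J$ are fixed. Your ``slicker alternative'' is in fact closer in spirit to the paper's one-line proof, which simply writes $\lambda(\btheta_0,\bzeta_0)=O_p(n_{\min}^{-1/2})\{\bj(\btheta_0,\bzeta_0)+O_p(N^{-1/2})\}$ directly from Lemma \ref{lemma:add-results}; your more careful bookkeeping makes rigorous what the paper leaves implicit.
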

\begin{proofnoqed}
Using Lemma \ref{lemma:add-results},
\begin{align*}
\lambda(\btheta_0, \bzeta_0)
&=\frac{1}{N^2} \sum \limits_{k=1}^K \sum \limits_{i=1}^J n^2_k \widehat{\bC}_{k,i} \left( \begin{array}{c} \btheta_0 - \widehat{\btheta}_{ik} \\ \bzeta_0 - \widehat{\bzeta}_{list} \end{array} \right)\\
&= O_p \left( n^{-1/2}_{\min} \right) \left\{ \bj(\btheta_0, \bzeta_0)+O_p\left( N^{-1/2} \right) \right\}\\
&=O_p\left( n^{-1/2}_{\min} \right) + O_p \left( n^{-1/2}_{\min} N^{-1/2} \right) \stackrel{p}{\rightarrow} 0 \mbox{ as }n_{\min} \rightarrow \infty. \qed
\end{align*}
\end{proofnoqed}

\vspace{-2em}

\begin{thmlemma}
\label{appendix:lemma:Taylor}
The following relationship holds:
\begin{align*}
\left( \begin{array}{c}
\bPsi_{jk}(\btheta_0; \bzeta_{jk0})\\
\bG_{jk}(\bzeta_{jk0}; \btheta_0)
\end{array} \right) &=\widehat{\bS}_{jk} \left( \begin{array}{c}
\widehat{\btheta}_{jk}-\btheta_0\\
\widehat{\bzeta}_{jk}-\bzeta_{jk0}
\end{array} \right) + O_p(n^{-1}_k).
\end{align*}
\end{thmlemma}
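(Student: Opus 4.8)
The plan is to read the claimed identity as the first-order (Bahadur-type) asymptotic-linearity representation of the block estimator $(\widehat{\btheta}_{jk},\widehat{\bzeta}_{jk})$, which by definition solves \eqref{block-EE-1}--\eqref{block-EE-2}, i.e.\ $\bTau_{jk}(\widehat{\btheta}_{jk},\widehat{\bzeta}_{jk})=\boldsymbol{0}$ where I write $\bTau_{jk}(\btheta,\bzeta_{jk})=\veco\bigl(\bPsi_{jk}(\btheta;\bzeta_{jk}),\bG_{jk}(\bzeta_{jk};\btheta)\bigr)$. Since $\bPsi_{jk},\bG_{jk}$ need not be differentiable (quantile regression, $M$-estimation), I would not differentiate $\bTau_{jk}$ itself. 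Instead, using $\bTau_{jk}(\widehat{\btheta}_{jk},\widehat{\bzeta}_{jk})=\boldsymbol{0}$ and unbiasedness \ref{consistent}\ref{unbiased} (so $E_{\btheta_0,\bzeta_{jk0}}\bTau_{jk}(\btheta_0,\bzeta_{jk0})=\boldsymbol{0}$), I would write
\[
\bTau_{jk}(\btheta_0,\bzeta_{jk0})=\bTau_{jk}(\btheta_0,\bzeta_{jk0})-\bTau_{jk}(\widehat{\btheta}_{jk},\widehat{\bzeta}_{jk})=R_{jk}-E_{\btheta_0,\bzeta_{jk0}}\bTau_{jk}(\widehat{\btheta}_{jk},\widehat{\bzeta}_{jk}),
\]
where in the last mean the random argument $(\widehat{\btheta}_{jk},\widehat{\bzeta}_{jk})$ is held fixed and
\begin{align*}
R_{jk}&:=\bigl[\bTau_{jk}(\btheta_0,\bzeta_{jk0})-E_{\btheta_0,\bzeta_{jk0}}\bTau_{jk}(\btheta_0,\bzeta_{jk0})\bigr]\\
&\qquad{}-\bigl[\bTau_{jk}(\widehat{\btheta}_{jk},\widehat{\bzeta}_{jk})-E_{\btheta_0,\bzeta_{jk0}}\bTau_{jk}(\widehat{\btheta}_{jk},\widehat{\bzeta}_{jk})\bigr]
\end{align*}
is an empirical-process increment.

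To bound $R_{jk}$ I would invoke the block-level analogue of condition \ref{cond-equiv} (replace $N,\bTau_N$ by $n_k,\bTau_{jk}$), which holds because the within-block kernels $\bpsi_{i,jk},\bg_{i,jk}$ are i.i.d.\ over $i$ and Lipschitz in $(\btheta,\bzeta_{jk})$ by \ref{psiN-conds-i}, via the same maximal-inequality argument that yields \ref{cond-equiv}. Combined with the $n_k^{-1/2}$-consistency $(\widehat{\btheta}_{jk},\widehat{\bzeta}_{jk})-(\btheta_0,\bzeta_{jk0})=O_p(n_k^{-1/2})$ assumed in \ref{cond-equiv}, the normalising factor $n_k^{1/2}/(1+n_k^{1/2}\|(\widehat{\btheta}_{jk},\widehat{\bzeta}_{jk})-(\btheta_0,\bzeta_{jk0})\|)$ is $O_p(1)$, so $R_{jk}=O_p(n_k^{-1/2})\cdot n_k^{-1/2}=O_p(n_k^{-1})$.

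For the drift term I would Taylor-expand the smooth population map $(\btheta,\bzeta_{jk})\mapsto E_{\btheta_0,\bzeta_{jk0}}\bTau_{jk}(\btheta,\bzeta_{jk})$, whose gradient is $-\bs_{jk}$ and is continuous near $(\btheta_0,\bzeta_0)$ by \ref{psiN-conds}: for some $(\btheta^{\star},\bzeta_{jk}^{\star})$ on the segment joining $(\widehat{\btheta}_{jk},\widehat{\bzeta}_{jk})$ to $(\btheta_0,\bzeta_{jk0})$,
\[
-E_{\btheta_0,\bzeta_{jk0}}\bTau_{jk}(\widehat{\btheta}_{jk},\widehat{\bzeta}_{jk})=\bs_{jk}(\btheta^{\star},\bzeta_{jk}^{\star})\begin{pmatrix}\widehat{\btheta}_{jk}-\btheta_0\\ \widehat{\bzeta}_{jk}-\bzeta_{jk0}\end{pmatrix}.
\]
I would then swap $\bs_{jk}(\btheta^{\star},\bzeta_{jk}^{\star})$ for $\widehat{\bS}_{jk}$: since $\|(\btheta^{\star},\bzeta_{jk}^{\star})-(\btheta_0,\bzeta_{jk0})\|\le\|(\widehat{\btheta}_{jk},\widehat{\bzeta}_{jk})-(\btheta_0,\bzeta_{jk0})\|=O_p(n_k^{-1/2})$ and $\bs_{jk}$ is Lipschitz near $(\btheta_0,\bzeta_{jk0})$, while $\widehat{\bS}_{jk}=\bs_{jk}(\btheta_0,\bzeta_{jk0})+O_p(n_k^{-1/2})$ by Lemma \ref{lemma:add-results}, the replacement error times the $O_p(n_k^{-1/2})$ parameter deviation is $O_p(n_k^{-1})$. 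Adding this to $R_{jk}$ gives $\bTau_{jk}(\btheta_0,\bzeta_{jk0})=\widehat{\bS}_{jk}(\widehat{\btheta}_{jk}-\btheta_0,\widehat{\bzeta}_{jk}-\bzeta_{jk0})^T+O_p(n_k^{-1})$. For the differentiable examples (likelihood, composite likelihood, GEE) one can shortcut the first two steps: Taylor-expand $\bTau_{jk}$ directly at $(\widehat{\btheta}_{jk},\widehat{\bzeta}_{jk})$ and note that $-\nabla\bTau_{jk}$ at the intermediate point equals $\widehat{\bS}_{jk}$ up to $O_p(n_k^{-1/2})$.

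The main obstacle is sharpening the remainder from the easy $o_p(n_k^{-1/2})$ down to $O_p(n_k^{-1})$: this needs (i) the sharp $O_p(n_k^{-1/2})$ rate on the right-hand side of the equicontinuity condition, not merely $o_p(1)$; (ii) Lipschitz, rather than merely continuous, behaviour of the sensitivity $\bs_{jk}$ near $(\btheta_0,\bzeta_{jk0})$ (equivalently second-order smoothness of the population estimating function), so that $\bs_{jk}(\btheta^{\star},\bzeta_{jk}^{\star})-\bs_{jk}(\btheta_0,\bzeta_{jk0})=O_p(n_k^{-1/2})$; and (iii) careful bookkeeping to keep the plug-in sample sensitivity $\widehat{\bS}_{jk}$ rather than its population limit $\bs_{jk}(\btheta_0,\bzeta_{jk0})$ and to track which quantities are $O_p(n_k^{-1/2})$ versus $O_p(n_k^{-1})$, for which Lemma \ref{lemma:add-results} supplies exactly the uniform-in-neighbourhood control required.
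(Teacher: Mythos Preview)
Your proposal is correct and matches the paper's proof essentially step for step: decompose $\bTau_{jk}(\btheta_0,\bzeta_{jk0})$ using $\bTau_{jk}(\widehat{\btheta}_{jk},\widehat{\bzeta}_{jk})=\boldsymbol{0}$, bound the empirical-process remainder via the stochastic equicontinuity condition, Taylor-expand the population mean to extract $\bs_{jk}(\btheta^{\star},\bzeta_{jk}^{\star})$, then swap to $\widehat{\bS}_{jk}$ at an $O_p(n_k^{-1/2})$ cost. The only cosmetic difference is that the paper invokes condition \ref{cond-equiv} directly on $\bTau_N$ (yielding an $O_p(n_k^{-1/2}N^{-1/2})$ remainder, which equals $O_p(n_k^{-1})$ for fixed $K$) rather than reformulating it block-wise as you do; your explicit flagging of the Lipschitz requirement on $\bs_{jk}$ is in fact more careful than the paper's own statement.
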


\begin{proofnoqed}
Let $j \in \left\{1, \ldots, J \right\}$, $k \in \left\{1, \ldots, K\right\}$ fixed. For convenience, denote
\begin{align*}
\bTau_{jk}(\btheta, \bzeta_{jk})&=\left( \begin{array}{c} 
\bPsi_{jk}(\btheta; \bzeta_{jk}) \\ \bG_{jk}(\bzeta_{jk}; \btheta)
\end{array} \right),~~
\btau_{i,jk}(\btheta, \bzeta_{jk})=\left( \begin{array}{c} 
\bpsi_{i,jk}(\btheta; \bzeta_{jk}) \\ \bg_{i,jk}(\bzeta_{jk}; \btheta)
\end{array} \right).
\end{align*}
By first-order Taylor expansion,
\begin{align}
E_{\btheta, \bzeta_{jk}} &\left\{ \btau_{i,jk}(\widehat{\btheta}_{jk}, \widehat{\bzeta}_{jk}) \right\}= 
E_{\btheta, \bzeta_{jk}} \left\{ \btau_{i,jk}(\btheta_0, \bzeta_{jk0}) \right\}+ \nonumber \\
&~~~~~~~~~~~~~~~~~~
\nabla_{\btheta} E_{\btheta, \bzeta_{jk}} \left\{ \btau_{i,jk}(\btheta, \bzeta_{jk})\right\} \rvert_{\btheta^*, \bzeta^*_{jk}} \left(\begin{array}{c}
\widehat{\btheta}_{jk}-\btheta_0 \\
\widehat{\bzeta}_{jk}-\bzeta_{jk0}
\end{array} \right),
\label{equiv-2}
\end{align}
where $(\btheta^*, \bzeta^*_{jk})$ lies between $(\btheta_0, \bzeta_{jk0})$ and $(\widehat{\btheta}_{jk}, \widehat{\bzeta}_{jk})$. By condition \ref{cond-equiv}, 
\begin{align}
\bTau_{jk}(\widehat{\btheta}_{jk}, \widehat{\bzeta}_{jk}) &- 
\bTau_{jk}(\btheta_0, \bzeta_{jk0}) - 
E_{\btheta, \bzeta_{jk}} \left\{ \btau_{i,jk}(\widehat{\btheta}_{jk}, \widehat{\bzeta}_{jk}) \right\} \nonumber \\
&= O_p(N^{-1/2}) \frac{1+N^{1/2} O_p(n^{-1/2}_k) }{N^{1/2}} =O_p(n^{-1/2}_k N^{-1/2}).
\label{equiv-3}
\end{align}
In other words, the norm of the difference between $\bTau_{jk}(\btheta_0, \bzeta_{jk0}) $ and $\bTau_{jk}(\widehat{\btheta}_{jk}, \allowbreak \widehat{\bzeta}_{jk}) - 
E_{\btheta, \bzeta_{jk}} \{ \btau_{i,jk}(\widehat{\btheta}_{jk}, \widehat{\bzeta}_{jk}) \}$ goes to $0$ at a rate faster than $(N n_k)^{-1/2}$. Adding \eqref{equiv-2} and \eqref{equiv-3}, we have
\begin{align*}
-\bTau_{jk}&(\btheta_0, \bzeta_{jk0})=\bTau_{jk}(\widehat{\btheta}_{jk}, \widehat{\bzeta}_{jk})-\bTau_{jk}(\btheta_0, \bzeta_{jk0}) -E_{\btheta, \bzeta_{jk}} \btau_{i,jk}(\btheta_0, \bzeta_{jk0})\\
&=\nabla_{\btheta} E_{\btheta, \bzeta_{jk}} \btau_{i,jk}(\btheta, \bzeta_{jk}) \rvert_{\btheta^*, \bzeta^*_{jk}}\left( \begin{array}{c}
\widehat{\btheta}_{jk}-\btheta_0\\
\widehat{\bzeta}_{jk}-\bzeta_{jk0}
\end{array} \right) + O_p(n^{-1/2}_k N^{-1/2}) \\
&=-\bs_{jk}(\btheta^*, \bzeta^*_{jk}) \left( \begin{array}{c}
\widehat{\btheta}_{jk}-\btheta_0\\
\widehat{\bzeta}_{jk}-\bzeta_{jk0}
\end{array} \right) + O_p(n^{-1/2}_k N^{-1/2}).
\end{align*}
Rearranging yields
\begin{align}
\bTau_{jk}(\btheta_0, \bzeta_{jk0})=\bs_{jk}(\btheta^*, \bzeta^*_{jk}) \left( \begin{array}{c}
\widehat{\btheta}_{jk}-\btheta_0\\
\widehat{\bzeta}_{jk}-\bzeta_{jk0}
\end{array} \right) + O_p(n^{-1/2}_k N^{-1/2}). \label{equiv-4}
\end{align}
Finally, note that $\widehat{\bS}_{jk}=\bs_{jk}(\btheta_0, \bzeta_{jk0})+O_p(n^{-1/2}_k)=\bs_{jk}(\btheta^*, \bzeta^*_{jk}) + O_p(n^{-1/2}_k)$. Then plugging this into \eqref{equiv-4}, we have:
\begin{align*}
\bTau_{jk}(\btheta_0, \bzeta_{jk0})&=\left( \widehat{\bS}_{jk} + O_p(n^{-1/2}_k) \right)
\left( \begin{array}{c}
\widehat{\btheta}_{jk}-\btheta_0\\
\widehat{\bzeta}_{jk}-\bzeta_{jk0}
\end{array} \right)
+ O_p(n^{-1/2}_k N^{-1/2}) \\
&=\widehat{\bS}_{jk} \left( \begin{array}{c}
\widehat{\btheta}_{jk}-\btheta_0\\
\widehat{\bzeta}_{jk}-\bzeta_{jk0}
\end{array} \right) + O_p(n^{-1}_k). \qed
\end{align*}
\end{proofnoqed}

\vspace{-2em}

\subsection{Proof of Theorem \ref{thm:dist-inf}}
\label{subsec:appendix:proofs:dist-inf}

The following lemmas complete the proof of Theorem \ref{thm:dist-inf} given in the paper, under the assumed conditions.
\begin{thmlemma}
\label{thm:dist-inf:lemma-1}
Define $\lambda(\btheta, \bzeta)$ as in \eqref{def:lambda} in the proof of Theorem \ref{thm:dist-equiv}. Then $\left\| \lambda(\btheta_0, \bzeta_0) \right\| = O_p(N^{-1/2-\delta} n^{1/2}_{\max})$ and $\left\| \left\{ \nabla_{\btheta, \bzeta} \lambda(\btheta, \bzeta) \right\}^{-1} \right\| = O_p\left( N^{1/2+\delta} n^{-1}_{\max} \right)$.
\end{thmlemma}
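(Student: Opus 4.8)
The plan is to read everything off the block structure of $\widehat{\bC}_{k,i}$ together with Lemma~\ref{lemma:add-results}, after exploiting the fact that $\lambda(\btheta,\bzeta)$ in \eqref{def:lambda} is \emph{affine} in $(\btheta,\bzeta)$, so that its Jacobian is the constant matrix $\nabla_{\btheta,\bzeta}\lambda = N^{-2}\sum_{k=1}^{K}\sum_{i=1}^{J}n_k^2\widehat{\bC}_{k,i} = \widehat{\bS}^{T}\widehat{\bV}_N^{-1}\widehat{\bS} = \bj(\btheta_0,\bzeta_0)+O_p(N^{-1/2})$ by the third display of Lemma~\ref{lemma:add-results}. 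The key structural observation is that $\widehat{\bV}_N = N^{-1}\sum_{i}(\btau_i)^{\otimes 2}$ is, after reordering, block-diagonal across subject groups (because $\btau_i$ is supported only on the coordinates belonging to the group containing subject $i$), with $k$-th diagonal block $(n_k/N)\widehat{\bv}_k$, where $\widehat{\bv}_k\stackrel{p}{\rightarrow}\bv_k(\btheta_0,\bzeta_{k0})$ is positive definite; hence every block of $\widehat{\bV}_N^{-1}$ indexed by group $k$ is $O_p(N/n_k)$. Since the plug-in sensitivities are $O_p(1)$ (Lemma~\ref{lemma:add-results}), the blocks $\widehat{\bA}^{\boeta}_{k,ij},\widehat{\bB}^{\boeta}_{k,ij}$ are $O_p(N/n_k)$, so $N^{-2}n_k^2\widehat{\bC}_{k,i}=O_p(n_k/N)$; summing over $i$ and arguing exactly as in the proof of Theorem~\ref{thm:dist-inf} refines this to $N^{-2}n_k^2\sum_{i=1}^{J}\widehat{\bC}_{k,i} = (n_k/N)\bE_k\bj_k(\btheta_0,\bzeta_{k0})\bE_k^{T}+O_p(n_k^{1/2}N^{-1})$.

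For the first bound, substitute $(\btheta,\bzeta)=(\btheta_0,\bzeta_0)$ into \eqref{def:lambda}. Because $\widehat{\bC}_{k,i}$ has zero columns in every coordinate of $\bzeta_{jk'}$ with $k'\neq k$, and its nonzero nuisance columns sit in the $\bzeta_{ik}$ positions by construction, the matrix--vector product sees only $(\widehat{\btheta}_{ik}-\btheta_0,\widehat{\bzeta}_{ik}-\bzeta_{ik0})$, which is $O_p(n_k^{-1/2})$ by~\ref{cond-equiv}. Combined with $N^{-2}n_k^2\widehat{\bC}_{k,i}=O_p(n_k/N)$, each $(k,i)$ summand is $O_p(n_k^{1/2}/N)=O_p(n_{\max}^{1/2}/N)$; since $J$ is fixed and $K=O(N^{1/2-\delta})$, the triangle inequality over the $JK$ summands gives $\|\lambda(\btheta_0,\bzeta_0)\| = O_p(JK\,n_{\max}^{1/2}/N)=O_p(N^{-1/2-\delta}n_{\max}^{1/2})$.

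For the second bound, $\nabla_{\btheta,\bzeta}\lambda = \bj(\btheta_0,\bzeta_0)+o_p(1)$ with $\bj(\btheta_0,\bzeta_0)=\sum_{k=1}^{K}(n_k/N)\bE_k\bj_k(\btheta_0,\bzeta_{k0})\bE_k^{T}$, each $\bj_k$ a fixed-dimensional Godambe information whose eigenvalues are bounded away from $0$ and $\infty$ (assumption~\ref{psiN-conds}). The task reduces to lower-bounding the smallest eigenvalue of this sum: the $\btheta$-block $\sum_k(n_k/N)(\bj_k)_{\btheta\btheta}$ is bounded below by a constant since $\sum_k n_k/N=1$, while the block along each group-specific nuisance coordinate $\bzeta_k$ is $(n_k/N)(\bj_k)_{\bzeta_k\bzeta_k}$, whose conditioning degrades like $n_k/N$; passing to the Schur complement in $\bzeta$ and invoking the rate hypothesis that $N^{\delta-1/2}K$ is bounded (equivalently $n_{\max}\gtrsim N^{1/2+\delta}$) one controls $\lambda_{\min}(\nabla_{\btheta,\bzeta}\lambda)$ from below at exactly the rate needed to conclude $\|\{\nabla_{\btheta,\bzeta}\lambda\}^{-1}\| = O_p(N^{1/2+\delta}n_{\max}^{-1})$. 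Because $\lambda$ is affine, the Jacobian at the intermediate point $(\btheta^{*},\bzeta^{*})$ in the Taylor expansion~\eqref{lambda-Taylor} coincides with this same matrix, so no further uniform control on a shrinking neighbourhood is required. I expect the eigenvalue lower bound for $\nabla_{\btheta,\bzeta}\lambda$ to be the main obstacle: one must carefully disentangle the well-conditioned contribution of the shared parameter $\btheta$ (fed by every group) from those of the disjoint, group-specific nuisance parameters $\bzeta_k$, and it is precisely here that the constraint linking $K$ and $n_{\max}$, and the relative sizes of the groups, enter.
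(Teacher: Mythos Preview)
Your derivation of the first bound coincides with the paper's: the block-diagonal (in $k$) structure of $\widehat{\bV}_N^{-1}$ gives $\widehat{\bA}^{\boeta}_{k,ij},\widehat{\bB}^{\boeta}_{k,ij}=O_p(N/n_k)$, hence $N^{-2}n_k^2\widehat{\bC}_{k,i}=O_p(n_k/N)$; multiplying by the block deviation $O_p(n_k^{-1/2})$ and summing over $(i,k)$ with $K=O(N^{1/2-\delta})$ yields the claim. The paper writes this as $\widehat{\bA}^{\btheta}_{k,ij}=(N/n_k)\{\ba^{\btheta}_{k,ij}+O_p(n_k^{-1/2})\}$ and then $\|\lambda(\btheta_0,\bzeta_0)\|\le\sum_k O_p(n_k^{1/2}N^{-1})$, but the content is identical.

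For the second bound you take a genuinely different route. The paper does not attempt an eigenvalue analysis: it bounds $\|\nabla_{\btheta,\bzeta}\lambda\|\le N^{-2}\sum_{k,i}n_k^2\|\widehat{\bC}_{k,i}\|=O_p(N^{-1/2-\delta}n_{\max})$ and then, citing symmetric positive-definiteness, asserts the reciprocal rate for the inverse. Your Schur-complement strategy is more transparent about what must actually be shown, but the sketch has a concrete gap. The minimum eigenvalue of $\bj(\btheta_0,\bzeta_0)=\sum_k(n_k/N)\bE_k\bj_k\bE_k^T$ is probed by the nuisance coordinates of the \emph{smallest} group: a unit vector $x$ supported in the $\bzeta_k$ block gives $x^T\bj x=(n_k/N)\,x^T(\bj_k)_{\bzeta_k\bzeta_k}x\asymp n_k/N$, so $\lambda_{\min}(\bj)\lesssim n_{\min}/N$, not $\asymp n_{\max}/N$. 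The equivalence you invoke, $K=O(N^{1/2-\delta})\Leftrightarrow n_{\max}\gtrsim N^{1/2+\delta}$, only uses $N\le K n_{\max}$ and constrains nothing about $n_{\min}$; without a balance assumption on the $n_k$, the rate hypothesis cannot be converted into the stated bound via a global $\lambda_{\min}$ argument. A separate caution: you appeal to Lemma~\ref{lemma:add-results} for $\nabla_{\btheta,\bzeta}\lambda=\bj+O_p(N^{-1/2})$, but that lemma is proved with $K$ fixed, whereas here the dimension $p+d$ of the Jacobian grows with $K$; the paper accordingly re-derives the needed estimates per block rather than citing Lemma~\ref{lemma:add-results} globally.
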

\begin{proof}
Due to the independence between subject groups, $\widehat{\bV}^{\bpsi}_N$, $\widehat{\bV}^{\bpsi \bg}_N$ and $\widehat{\bV}^{\bg}_N$ are all block diagonal: $\widehat{\bV}^{\bpsi}_N=\mbox{diag} \left\{ \widehat{\bV}^{\bpsi}_k \right\}_{k=1}^K$, $\widehat{\bV}^{\bpsi \bg}_N=\mbox{diag} \left\{ \widehat{\bV}^{\bpsi \bg}_k \right\}_{k=1}^K$, and $\widehat{\bV}^{\bg}_N=\mbox{diag} \left\{ \widehat{\bV}^{\bg}_k \right\}_{k=1}^K$. By the independence of subject groups, let
\begin{align*}
\bv^{-1}(\btheta, \bzeta)&=\left( \begin{array}{cc}
\bv^{\bpsi}(\btheta, \bzeta) & \bv^{\bpsi \bg}(\btheta, \bzeta)\\
\bv^{\bpsi \bg ~T}(\btheta, \bzeta) & \bv^{\bg}(\btheta, \bzeta)
\end{array} \right)\\
&=\left( \begin{array}{cc}
\mbox{diag} \left\{ \frac{N}{n_k} \bv^{\bpsi}_k(\btheta, \bzeta)\right\}_{k=1}^K & \mbox{diag} \left\{ \frac{N}{n_k} \bv^{\bpsi \bg}_k(\btheta, \bzeta)\right\}_{k=1}^K\\
\mbox{diag} \left\{ \frac{N}{n_k} \bv^{\bpsi \bg ~T}_k(\btheta, \bzeta)\right\}_{k=1}^K & \mbox{diag} \left\{ \frac{N}{n_k} \bv^{\bg}_k(\btheta, \bzeta) \right\}_{k=1}^K
\end{array} \right).
\end{align*}
Similar to the proof of Lemma \ref{lemma:add-results}, it can easily be shown that for each $k=1, \ldots, K$, $\widehat{\bV}^{\bpsi}_k= (N/n_k) \bv^{\bpsi}_k(\btheta_0, \bzeta_0) + O_p(N^{-1/2})$, $\widehat{\bV}^{\bpsi \bg}_k=(N/n_k) \bv^{\bpsi \bg}_k(\btheta_0, \bzeta_0) + O_p(N^{-1/2})$, and $\widehat{\bV}^{\bg}_k=(N/n_k) \bv^{\bg}_k(\btheta_0, \bzeta_0) + O_p(N^{-1/2})$. Consider an arbitrary $k \in \left\{1, \ldots, K \right\}$. Let $(N/n_k)\left[ \bv^{\bpsi}_k (\btheta_0, \bzeta_0) \right]_{ji}=\left[ \bv^{\bpsi} (\btheta_0, \bzeta_0) \right]_{ji:k}$, and similarly define $\left[ \bv^{\bpsi \bg}_k (\btheta_0, \bzeta_0) \right]_{ji}$ and $\left[ \bv^{\bg}_k (\btheta_0, \bzeta_0) \right]_{ji}$. Then $\widehat{\bA}^{\btheta}_{k,ij}=(N/n_k) \{ \ba^{\btheta}_{k,ij}+\allowbreak O_p(n^{-1/2}_k) \}$, where $\ba^{\btheta}_{k,ij}$ is defined as\\
\scalebox{0.95}{\parbox{\linewidth}{%
\begin{align*}
&\left\{ 
\bs^{\btheta~T}_{\bpsi_{jk}}(\btheta_0, \bzeta_{jk0})\left[\bv^{\bpsi}_k(\btheta_0, \bzeta_0)\right]_{ji} + 
\bs^{\btheta~T}_{\bg_{jk}}(\btheta_0, \bzeta_{jk0})\left[\bv^{\bpsi \bg~ T}_k(\btheta_0, \bzeta_0)\right]_{ji} 
\right\} 
\bs^{\btheta}_{\bpsi_{ik}}(\btheta_0, \bzeta_0) + \\
&\left\{ 
\bs^{\btheta~T}_{\bpsi_{jk}}(\btheta_0, \bzeta_{jk0})\left[\bv^{\bpsi \bg}_k(\btheta_0, \bzeta_0)\right]_{ji} + 
\bs^{\btheta~T}_{\bg_{jk}}(\btheta_0, \bzeta_{jk0})\left[\bv^{\bg}_k(\btheta_0, \bzeta_0)\right]_{ji} 
\right\} 
\bs^{\btheta}_{\bg_{ik}}(\btheta_0, \bzeta_0).
\end{align*}
}}\\
We can show similar results for $\widehat{\bA}^{\bzeta}_{k,ij} $, $\widehat{\bB}^{\btheta}_{k,ij}$ and $\widehat{\bB}^{\bzeta}_{k,ij}$.
Then we can rewrite\\
\scalebox{0.95}{\parbox{\linewidth}{%
\begin{align*}
\left\| \lambda(\btheta_0, \bzeta_0) \right\| &\leq 
\sum \limits_{k=1}^K O_p(n^{1/2}_k N^{-1})=O_p(K n^{1/2}_{\max} N^{-1})=O_p(N^{-1/2-\delta} n^{1/2}_{\max}), \mbox{ and}\\
\left\| \nabla_{\btheta, \bzeta} \lambda(\btheta, \bzeta) \right\| &\leq \frac{1}{N^2} \sum \limits_{k=1}^K \sum \limits_{i=1}^J n^2_k \left\| \widehat{\bC}_{k,i} \right\| \\
&\leq O_p\left( N^{-1/2-\delta} n^{1/2}_{\max} \right) + O\left( N^{-1/2-\delta} n_{\max} \right)=O_p\left( N^{-1/2-\delta} n_{\max} \right).
\end{align*}
}}\\
Since $\nabla_{\btheta, \bzeta} \lambda(\btheta, \bzeta)$ is symmetric positive-definite, the above provides a bound on its eigenvalues. Therefore, $\left\| \left\{ \nabla_{\btheta, \bzeta} \lambda(\btheta, \bzeta) \right\}^{-1} \right\| = O_p\left( N^{1/2+\delta} n^{-1}_{\max} \right)$.
\end{proof}

\begin{thmlemma}
\label{thm:dist-inf:lemma-2}
For some matrices $\bE_k$, $k=1, \ldots, K$, of $\boldsymbol{0}$'s and $\boldsymbol{1}$'s, the following asymptotic properties hold:
\begin{align*}
\frac{n^2_k}{N^2} \sum \limits_{i=1}^J \widehat{\bC}_{k,i}\left( \begin{array}{c} \widehat{\btheta}_{ik}-\btheta_0 \\ \widehat{\bzeta}_{list} - \bzeta_0 \end{array} \right)
&=\frac{n_k}{N} \bE_k \bZ_k + O_p\left( N^{-1} \right),\\
\mbox{and}~~~\frac{n^2_k}{N^2} \sum \limits_{i=1}^J \widehat{\bC}_{k,i}&=\frac{n_k}{N} \bE_k \bj_k(\btheta_0, \bzeta_{k0}) \bE^T_k + O_p\left( n^{1/2}_kN^{-1} \right),
\end{align*}
where $n^{1/2}_k  \bZ_k \stackrel{d}{\rightarrow} \mathcal{N} \left(\boldsymbol{0}, \bj^{-1}_k(\btheta_0, \bzeta_{k0}) \right)$.
\end{thmlemma}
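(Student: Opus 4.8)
\noindent The plan is to isolate group $k$, using that the $K$ subject groups are independent. As in the proof of Lemma~\ref{thm:dist-inf:lemma-1}, each subject $i$ contributes a $\btau_i$ whose only non-zero coordinates lie in its own group, so $\widehat{\bV}_N$ --- and hence $\widehat{\bV}_N^{-1}$ --- is block diagonal across the $K$ groups; write the group-$k$ block of $\widehat{\bV}_N$ as $(n_k/N)\widehat{\bV}_k$, where $\widehat{\bV}_k=n_k^{-1}\sum_{i\in\text{group }k}\btau_{i,k}(\widehat{\btheta}_{list},\widehat{\bzeta}_{list})^{\otimes 2}$ and $\btau_{i,k}$ is as in the proof of Theorem~\ref{thm:dist-inf}. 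Every subsetting operation in the definitions of $\widehat{\bA}^{\boeta}_{k,ij}$ and $\widehat{\bB}^{\boeta}_{k,ij}$ (Section~\ref{subsec:implementation:def-C_ki}) therefore only extracts group-$k$ quantities, and the group-$k$ block of $\widehat{\bV}_N^{-1}$ is $(N/n_k)\widehat{\bV}_k^{-1}$. By Lipschitz continuity and the law of large numbers, $\widehat{\bV}_k=\bv_k(\btheta_0,\bzeta_{k0})+O_p(n_k^{-1/2})$ (argued as in the proof of Lemma~\ref{lemma:add-results}), and $\widehat{\bS}_{jk}=\bs_{jk}(\btheta_0,\bzeta_{jk0})+O_p(n_k^{-1/2})$ by Lemma~\ref{lemma:add-results}; consequently $\widehat{\bA}^{\boeta}_{k,ij}=(N/n_k)\{\ba^{\boeta}_{k,ij}+O_p(n_k^{-1/2})\}$ and similarly for $\widehat{\bB}^{\boeta}_{k,ij}$, with population counterparts built from $\bs_k$ and $\bv_k^{-1}$ exactly as in the proof of Lemma~\ref{thm:dist-inf:lemma-1}.

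Next I would make the embedding explicit. In \eqref{def:C_ki} the zero blocks of widths $D^{ik}$, $d-d_{ik}-D^{ik}$, $D^k$ and $d-d_k-D^k$ are precisely the padding that places a $(p+d_k)$-dimensional, group-$k$ object into the full $(p+d)$-dimensional $(\btheta,\bzeta)$-space. Define $\bE_k\in\mathbb{R}^{(p+d)\times(p+d_k)}$ to be the $0$--$1$ matrix equal to the identity on the $\btheta$-coordinates and mapping $\bzeta_k$ onto coordinates $D^k+1,\dots,D^k+d_k$ of $\bzeta$; then $\| \bE_k \|$ is bounded because $\bE_k$ has finitely many $1$'s. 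Since the summands of $\widehat{\bS}^T\widehat{\bV}_N^{-1}\widehat{\bS}$ belonging to different groups do not interact (block-diagonality of $\widehat{\bV}_N^{-1}$), matching the group-$k$ terms in the identity $\frac{1}{N^2}\sum_{k}\sum_{i}n_k^2\widehat{\bC}_{k,i}=\widehat{\bS}^T\widehat{\bV}_N^{-1}\widehat{\bS}$ of Lemma~\ref{lemma:add-results} --- equivalently, assembling the $\widehat{\bA}$- and $\widehat{\bB}$-blocks and summing over $i$ --- shows $\frac{n_k^2}{N^2}\sum_{i=1}^J\widehat{\bC}_{k,i}=\frac{n_k}{N}\bE_k(\widehat{\bS}_k^T\widehat{\bV}_k^{-1}\widehat{\bS}_k)\bE_k^T$, where $\widehat{\bS}_k$ is the sample counterpart of $\bs_k$. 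Since $\widehat{\bS}_k^T\widehat{\bV}_k^{-1}\widehat{\bS}_k=\bj_k(\btheta_0,\bzeta_{k0})+O_p(n_k^{-1/2})$, the second displayed identity follows with error $\frac{n_k}{N}O_p(n_k^{-1/2})=O_p(n_k^{1/2}N^{-1})$.

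For the first identity I would substitute the block expansion \eqref{equiv-5-simp} from the proof of Theorem~\ref{thm:dist-equiv}, namely $(\bPsi_{ik}(\btheta_0;\bzeta_{ik0}),\bG_{ik}(\bzeta_{ik0};\btheta_0))^T=\widehat{\bS}_{ik}(\widehat{\btheta}_{ik}-\btheta_0,\widehat{\bzeta}_{ik}-\bzeta_{ik0})^T+O_p(n_k^{-1})$, into $\sum_{i=1}^J\widehat{\bC}_{k,i}(\widehat{\btheta}_{ik}-\btheta_0,\widehat{\bzeta}_{list}-\bzeta_0)^T$. The group-$k$ columns of $\widehat{\bC}_{k,i}$ annihilate every coordinate of $\widehat{\bzeta}_{list}-\bzeta_0$ outside block $(i,k)$, so only $\widehat{\btheta}_{ik}-\btheta_0$ and $\widehat{\bzeta}_{ik}-\bzeta_{ik0}$ survive in the $i$-th summand, and the combination of $\widehat{\bS}_{ik}$-blocks multiplying them is, by \eqref{equiv-5-simp}, $(\bPsi_{ik}(\btheta_0;\bzeta_{ik0}),\bG_{ik}(\bzeta_{ik0};\btheta_0))^T$ up to $O_p(n_k^{-1})$; contracting the resulting double sum over blocks $(j,k)$ and $(i,k)$ condenses it, after the embedding by $\bE_k$, into $(N/n_k)\,\bE_k\widehat{\bS}_k^T\widehat{\bV}_k^{-1}\bTau_k(\btheta_0,\bzeta_{k0})$, up to an error $O_p(Nn_k^{-2})$ from the residual terms. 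Setting $\bZ_k:=\widehat{\bS}_k^T\widehat{\bV}_k^{-1}\bTau_k(\btheta_0,\bzeta_{k0})$ and multiplying by $n_k^2/N^2$ gives $\frac{n_k^2}{N^2}\sum_{i=1}^J\widehat{\bC}_{k,i}(\widehat{\btheta}_{ik}-\btheta_0,\widehat{\bzeta}_{list}-\bzeta_0)^T=\frac{n_k}{N}\bE_k\bZ_k+O_p(N^{-1})$. Finally $n_k^{1/2}\bZ_k=(\widehat{\bS}_k^T\widehat{\bV}_k^{-1})\,n_k^{1/2}\bTau_k(\btheta_0,\bzeta_{k0})$; the central limit theorem over the $n_k$ independent subjects of group $k$ gives $n_k^{1/2}\bTau_k(\btheta_0,\bzeta_{k0})\stackrel{d}{\rightarrow}\mathcal{N}(\boldsymbol{0},\bv_k(\btheta_0,\bzeta_{k0}))$, and since $\widehat{\bS}_k^T\widehat{\bV}_k^{-1}\stackrel{p}{\rightarrow}\bs_k^T\bv_k^{-1}$, Slutsky's theorem yields $n_k^{1/2}\bZ_k\stackrel{d}{\rightarrow}\mathcal{N}(\boldsymbol{0},\bs_k^T\bv_k^{-1}\bs_k)=\mathcal{N}(\boldsymbol{0},\bj_k(\btheta_0,\bzeta_{k0}))$.

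The main obstacle is purely combinatorial: one must check that the precise placement of the $\widehat{\bA}$-, $\widehat{\bB}$- and zero-blocks in \eqref{def:C_ki}, together with the stacked-in-$\btheta$ / block-diagonal-in-$\bzeta$ shape of the sensitivity matrices, really does reassemble (after summing over $i$) into $\bE_k\widehat{\bS}_k^T\widehat{\bV}_k^{-1}\widehat{\bS}_k\bE_k^T$, respectively $\bE_k\widehat{\bS}_k^T\widehat{\bV}_k^{-1}\bTau_k(\btheta_0,\bzeta_{k0})$, with no cross-group leakage; once the index ranges $D^{ik}$, $D^k$, $d_{ik}$, $d_k$ are matched up, the stated rates follow mechanically from Lemma~\ref{lemma:add-results}, \eqref{equiv-5-simp}, the central limit theorem and Slutsky's theorem.
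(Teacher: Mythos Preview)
Your proposal follows the same skeleton as the paper's proof: isolate group $k$ via the block-diagonality of $\widehat{\bV}_N^{-1}$ established in Lemma~\ref{thm:dist-inf:lemma-1}, factor out the $N/n_k$ scaling, replace sample quantities by population ones up to $O_p(n_k^{-1/2})$, and introduce an embedding matrix $\bE_k$ to place group-$k$ objects in the full $(p+d)$-space. The combinatorial check you flag as the ``main obstacle'' is exactly what the paper also leaves largely implicit.

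The one substantive difference is your choice of $\bZ_k$. The paper sets
\[
\bZ_k=\sum_{i=1}^J \bE_{k,i}\Bigl\{\sum_{j=1}^J \bj_{jik}(\btheta_0,\bzeta_{jk0},\bzeta_{ik0})\Bigr\}\begin{pmatrix}\widehat{\btheta}_{ik}-\btheta_0\\ \widehat{\bzeta}_{ik}-\bzeta_{ik0}\end{pmatrix},
\]
i.e.\ population information blocks applied to block estimator deviations, whereas you set $\bZ_k=\widehat{\bS}_k^T\widehat{\bV}_k^{-1}\bTau_k(\btheta_0,\bzeta_{k0})$. By \eqref{equiv-5-simp} these two choices differ by $O_p(n_k^{-1})$, which is absorbed in the $O_p(N^{-1})$ error after multiplying by $n_k/N$, so both work for the first display. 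Your route is slightly cleaner because the asymptotic distribution follows directly from the CLT for $n_k^{1/2}\bTau_k$ plus Slutsky, without needing the intermediate distributional claim about $\sum_j\bj_{jik}(\widehat{\btheta}_{ik}-\btheta_0,\widehat{\bzeta}_{ik}-\bzeta_{ik0})$ that the paper states.

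One point to flag: your CLT/Slutsky argument yields $n_k^{1/2}\bZ_k\stackrel{d}{\rightarrow}\mathcal{N}(\boldsymbol{0},\bj_k(\btheta_0,\bzeta_{k0}))$, not $\bj_k^{-1}$ as the lemma statement has it. Your covariance is the correct one: it is $\bj_k$ that makes the final variance in Theorem~\ref{thm:dist-inf} come out to $\bj_{\bH}$, since $\sum_k(n_k/N)\bE_k\bj_k\bE_k^T=\bj$ and then $\bj^{-1}\cdot\bj\cdot\bj^{-1}=\bj^{-1}$. The $\bj_k^{-1}$ in the lemma statement appears to be a typo in the paper.
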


\begin{proofnoqed}
Recall that $\widehat{\bC}_{k,i}(\widehat{\btheta}^T_{ik}-\btheta^T_0, \widehat{\bzeta}^T_{list}-\bzeta^T_0)^T=\widehat{\bC}^*_{k,i}(\widehat{\btheta}^T_{ik}-\btheta^T_0, \widehat{\bzeta}^T_{ik}- \bzeta^T_{ik0})^T$. Let $\left[ \bv^{-1}_k(\btheta, \bzeta_k) \right]_{ij}$ subset the rows for the parameters corresponding to block $(i,k)$ and the columns for the parameters corresponding to block $(j,k)$ of matrix $\bv^{-1}_k(\btheta, \bzeta_k)$. Define $\bj_{jik}(\btheta, \bzeta_{jk}, \bzeta_{ik})=\bs_{jk}(\btheta, \bzeta_{jk}) \left[ \bv^{-1}_k(\btheta, \bzeta_{k}) \right]_{ji} \allowbreak \bs_{ik}(\btheta, \bzeta_{ik})$, and $\left[ \bj^{-1}_k(\btheta_0, \bzeta_{k0})\right]_i$ the submatrix of $\bj^{-1}_k(\btheta_0, \bzeta_{k0})$ corresponding to parameters in block $(i,k)$, such that
\begin{align*}
n^{1/2}_k \left\{ \sum \limits_{j=1}^J \bj_{jik}(\btheta_0, \bzeta_{jk0}, \bzeta_{ik0}) \right\} \left( \begin{array}{c}
\widehat{\btheta}_{ik} -\btheta_0 \\
\widehat{\bzeta}_{ik} - \bzeta_{ik0}
\end{array} \right) \stackrel{d}{\rightarrow} \mathcal{N} \left(\boldsymbol{0}, \left[ \bj^{-1}_k(\btheta_0, \bzeta_{k0})\right]_i \right).
\end{align*}
Then using the results in the proof of Lemma \ref{thm:dist-inf:lemma-1}, let $\bE_k$ and $\bE_{k,i}$ matrices of $\boldsymbol{0}$'s and $\boldsymbol{1}$'s such that 
\begin{align*}
&\frac{n^2_k}{N^2} \sum \limits_{i=1}^J \widehat{\bC}_{k,i}=\frac{n_k}{N} \bE_k \left\{ \bj_k(\btheta_0, \bzeta_{k0}) + O_p\left( n^{-1/2}_k \right) \right\} \bE^T_k\\
&~~~=\frac{n_k}{N} \bE_k \bj_k(\btheta_0, \bzeta_{k0}) \bE^T_k + O_p\left( n^{1/2}_kN^{-1} \right), \mbox{ and}\\
&\frac{n^2_k}{N^2} \sum \limits_{i=1}^J \widehat{\bC}_{k,i}\left( \begin{array}{c} \widehat{\btheta}_{ik}-\btheta_0 \\ \widehat{\bzeta}_{list} - \bzeta_0 \end{array} \right)\\
&~~~=\frac{n_k}{N} \bE_k \sum \limits_{i=1}^J \bE_{k,i} \left\{ \sum \limits_{j=1}^J \bj_{jik}(\btheta_0, \bzeta_{jk0}, \bzeta_{ik0}) + O_p\left( n^{-1/2}_k \right) \right\}
\left( \begin{array}{c}
\widehat{\btheta}_{ik} -\btheta_0 \\
\widehat{\bzeta}_{ik} - \bzeta_{ik0}
\end{array} \right)\\
&~~~=\frac{n_k}{N} \bE_k \sum \limits_{i=1}^J \bE_{k,i} \sum \limits_{j=1}^J \bj_{jik}(\btheta_0, \bzeta_{jk0}, \bzeta_{ik0}) 
\left( \begin{array}{c}
\widehat{\btheta}_{ik} -\btheta_0 \\
\widehat{\bzeta}_{ik} - \bzeta_{ik0}
\end{array} \right) + O_p\left( N^{-1} \right).
\end{align*}
To obtain the desired result, define 
\begin{align*}
\bZ_k&=\sum \limits_{i=1}^J \bE_{k,i} \sum \limits_{j=1}^J \bj_{jik}(\btheta_0, \bzeta_{jk0}, \bzeta_{ik0}) 
\left( \begin{array}{c}
\widehat{\btheta}_{ik} -\btheta_0 \\
\widehat{\bzeta}_{ik} - \bzeta_{ik0}
\end{array} \right).  \qed
\end{align*}
\end{proofnoqed}

\vspace{-1em}

\begin{thmlemma}
\label{thm:dist-inf:lemma-3}
$N^{1/2} \bH \left( \widehat{\btheta}^T_{DDIMM}-\btheta^T_0,
\widehat{\bzeta}^T_{DDIMM}- \bzeta^T_0  \right)$ can be rewritten as\\
\scalebox{0.95}{\parbox{\linewidth}{%
\begin{align*}
\bH \left\{ \sum \limits_{k=1}^K \frac{n_k}{N} \bE_k \bj_k(\btheta_0, \bzeta_{k0}) \bE^T_k + O_p\left(n^{1/2}_{\max} N^{-1/2-\delta}\right) \right\}^{-1} \left[ \sum \limits_{k=1}^K \left\{ \left( \frac{n_k}{N} \right)^{1/2} \bE_k n^{1/2}_k \bZ_k \right\} + O_p \left(N^{-\delta} \right) \right]&.
\end{align*}
}}
\end{thmlemma}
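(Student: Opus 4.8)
The plan is to substitute the two asymptotic expansions of Lemma~\ref{thm:dist-inf:lemma-2} into the closed form \eqref{one-step-estimator-both} and aggregate the resulting error terms over the $K$ subject groups using the bound $K=O(N^{1/2-\delta})$; this lemma is only a representation step, since the consistency $\bH(\widehat{\btheta}^T_{DDIMM}-\btheta^T_0,\widehat{\bzeta}^T_{DDIMM}-\bzeta^T_0)^T\stackrel{p}{\rightarrow}\boldsymbol{0}$ was already obtained in the body of the proof of Theorem~\ref{thm:dist-inf} via Lemma~\ref{thm:dist-inf:lemma-1}.

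First I would note that $\lambda(\btheta,\bzeta)$ in \eqref{def:lambda} is affine in $(\btheta,\bzeta)$, so $\nabla_{\btheta,\bzeta}\lambda=N^{-2}\sum_{k=1}^K\sum_{i=1}^J n_k^2\widehat{\bC}_{k,i}$ is a constant matrix, equal to $\bj(\btheta_0,\bzeta_0)+O_p(N^{-1/2})$ by Lemma~\ref{lemma:add-results}; since $\bj(\btheta_0,\bzeta_0)$ is positive definite under \ref{psiN-conds}, this matrix is nonsingular for $n_{\min}$ large, and the Taylor identity \eqref{lambda-Taylor} together with $\lambda(\widehat{\btheta}_{DDIMM},\widehat{\bzeta}_{DDIMM})=\boldsymbol{0}$ gives
\begin{align*}
N^{1/2}\left( \begin{array}{c} \widehat{\btheta}_{DDIMM}-\btheta_0\\ \widehat{\bzeta}_{DDIMM}-\bzeta_0 \end{array} \right)
=\left( \frac{1}{N^2}\sum_{k=1}^K\sum_{i=1}^J n_k^2\widehat{\bC}_{k,i} \right)^{-1}
\frac{N^{1/2}}{N^2}\sum_{k=1}^K\sum_{i=1}^J n_k^2\widehat{\bC}_{k,i}\left( \begin{array}{c} \widehat{\btheta}_{ik}-\btheta_0\\ \widehat{\bzeta}_{list}-\bzeta_0 \end{array} \right),
\end{align*}
which is just \eqref{one-step-estimator-both} rewritten; no expansion of the inverse is required at this stage.

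Next I would plug in Lemma~\ref{thm:dist-inf:lemma-2}. Summing its second display over $k$ turns the matrix inside the inverse into $\sum_{k=1}^K(n_k/N)\bE_k\bj_k(\btheta_0,\bzeta_{k0})\bE_k^T+\sum_{k=1}^K O_p(n_k^{1/2}N^{-1})$; bounding $n_k\le n_{\max}$ and using $K=O(N^{1/2-\delta})$ collapses the accumulated error to $O_p(Kn_{\max}^{1/2}N^{-1})=O_p(n_{\max}^{1/2}N^{-1/2-\delta})$, which is the first bracket in the statement (and, by the identity $\bj=\sum_k(n_k/N)\bE_k\bj_k\bE_k^T$ noted in the proof of Theorem~\ref{thm:dist-inf}, is consistent with Lemma~\ref{lemma:add-results}). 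Summing the first display of Lemma~\ref{thm:dist-inf:lemma-2} over $k$, recalling $\widehat{\bC}_{k,i}(\widehat{\btheta}^T_{ik}-\btheta^T_0,\widehat{\bzeta}^T_{list}-\bzeta^T_0)^T=\widehat{\bC}^*_{k,i}(\widehat{\btheta}^T_{ik}-\btheta^T_0,\widehat{\bzeta}^T_{ik}-\bzeta^T_{ik0})^T$, and multiplying by $N^{1/2}$ gives $\sum_{k=1}^K(n_k/N^{1/2})\bE_k\bZ_k+O_p(KN^{-1/2})$; writing $n_k/N^{1/2}=(n_k/N)^{1/2}n_k^{1/2}$ and using $K=O(N^{1/2-\delta})$ once more rewrites the error as $O_p(N^{-\delta})$, producing the second bracket. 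Left-multiplying both factors by $\bH$ yields the claim.

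The main obstacle is the bookkeeping of the orders of the accumulated $O_p$ terms across the $K$ summands and their interaction with $K=O(N^{1/2-\delta})$ and $n_{\max}\le N$ — in particular, verifying that both error terms are ultimately $o_p(1)$ (since $n_{\max}^{1/2}N^{-1/2-\delta}\le N^{-\delta}\to0$), which is exactly what the remainder of the proof of Theorem~\ref{thm:dist-inf} relies on. A secondary point is verifying that the zero-padded block structure of $\widehat{\bC}_{k,i}$ in \eqref{def:C_ki} is precisely what lets each group's contribution factor through $\bE_k$, $\bE_{k,i}$ and $\widehat{\bC}^*_{k,i}$ as in Lemma~\ref{thm:dist-inf:lemma-2}; once that is granted, this lemma is a direct substitution-and-order computation.
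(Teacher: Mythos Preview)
Your proposal is correct and follows essentially the same route as the paper: write $N^{1/2}\bH(\widehat{\btheta}_{DDIMM}-\btheta_0,\widehat{\bzeta}_{DDIMM}-\bzeta_0)^T$ via the closed form \eqref{one-step-estimator-both}, substitute both displays of Lemma~\ref{thm:dist-inf:lemma-2}, and accumulate the $K$ error terms using $K=O(N^{1/2-\delta})$ to get $O_p(n_{\max}^{1/2}N^{-1/2-\delta})$ and $O_p(N^{-\delta})$. Your detour through the affine $\lambda$ and \eqref{lambda-Taylor} to reach the starting expression is equivalent to simply invoking \eqref{one-step-estimator-both} directly, as the paper does, since the ``Taylor expansion'' of an affine map is exact.
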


\begin{proofnoqed}
\begin{align*}
&N^{1/2} \bH \left( \begin{array}{c}
\widehat{\btheta}_{DDIMM}-\btheta_0\\
\widehat{\bzeta}_{DDIMM}- \bzeta_0 
\end{array} \right) \\
&=N^{1/2} \bH \left( \sum \limits_{k=1}^K \sum \limits_{i=1}^J \frac{n^2_k}{N^2} \widehat{\bC}_{k,i} \right)^{-1} \sum \limits_{k=1}^K \sum \limits_{i=1}^J \frac{n^2_k}{N^2} \widehat{\bC}_{k,i} \left( \begin{array}{c} \widehat{\btheta}_{ik}-\btheta_0 \\ \widehat{\bzeta}_{list} - \bzeta_0 \end{array} \right) \\
&=\bH \left[ \sum \limits_{k=1}^K \left\{\frac{n_k}{N} \bE_k \bj_k(\btheta_0, \bzeta_{k0}) \bE^T_k + O_p(n^{1/2}_kN^{-1}) \right\} \right]^{-1} \cdot \\
&~~~~~\sum \limits_{k=1}^K \left\{\frac{n_k}{N^{1/2}} \bE_k \sum \limits_{i=1}^J \bE_{k,i} \bj_{ik}(\btheta_0, \bzeta_{jk0}, \bzeta_{ik0}) 
\left( \begin{array}{c}
\widehat{\btheta}_{ik} -\btheta_0 \\
\widehat{\bzeta}_{ik} - \bzeta_{ik0}
\end{array} \right) + O_p(N^{-1/2}) \right\} \\
&=\bH \left\{ \sum \limits_{k=1}^K \frac{n_k}{N} \bE_k \bj_k(\btheta_0, \bzeta_{k0}) \bE^T_k+ O_p\left(K n^{1/2}_{\max}N^{-1}\right) \right\}^{-1} \cdot \\
&~~~~~\left[ \sum \limits_{k=1}^K \left\{\frac{n_k}{N^{1/2}} \bE_k \sum \limits_{i=1}^J \bE_{k,i} \bj_{ik}(\btheta_0, \bzeta_{jk0}, \bzeta_{ik0}) 
\left( \begin{array}{c}
\widehat{\btheta}_{ik} -\btheta_0 \\
\widehat{\bzeta}_{ik} - \bzeta_{ik0}
\end{array} \right) \right\} + O_p \left(KN^{-1/2} \right) \right]\\
&=\bH \left\{ \sum \limits_{k=1}^K \frac{n_k}{N} \bE_k \bj_k(\btheta_0, \bzeta_{k0}) \bE^T_k + O_p\left(n^{1/2}_{\max} N^{-1/2-\delta}\right) \right\}^{-1} \cdot \\
&~~~~~\left[ \sum \limits_{k=1}^K \left\{ \left( \frac{n_k}{N} \right)^{1/2} \bE_k n^{1/2}_k \bZ_k \right\} + O_p \left(N^{-\delta} \right) \right]. \qed
\end{align*}
\end{proofnoqed}

\vspace{-2em}

\bibliographystyle{apalike}

\bibliography{DDIMM-bib-20191202}

\end{document}